\documentclass{amsart}
\usepackage{amsmath, amssymb, amsthm, mathrsfs, color, comment}
\usepackage{tikz, here}
\usetikzlibrary{cd}
\usepackage{hyperref}
\usepackage{cleveref}


\newtheorem{dfn}{Definition}[section]
\newtheorem{thm}[dfn]{Theorem}

\newtheorem{lem}[dfn]{Lemma}
\newtheorem{cor}[dfn]{Corollary}
\newtheorem{prop}[dfn]{Proposition}
\newtheorem{rem}[dfn]{Remark}

\newtheorem{claim}{Claim}[dfn]
\newtheorem{subclaim}{Subclaim}[claim]

\crefname{dfn}{Definition}{Definitions}
\crefname{thm}{Theorem}{Theorems}
\crefname{fact}{Fact}{Facts}
\crefname{lem}{Lemma}{Lemmas}
\crefname{cor}{Corollary}{Corollaries}
\crefname{prop}{Proposition}{Propositions}
\crefname{rem}{Remark}{Remarks}
\crefname{notation}{Notation}{Notations}
\crefname{claim}{Claim}{Claims}
\crefname{subclaim}{Subclaim}{Subclaims}
\crefname{conj}{Conjecture}{Conjectures}
\crefname{ques}{Question}{Questions}
\crefname{section}{Section}{Sections}
\crefname{subsection}{Subsection}{Subsections}

\numberwithin{equation}{section}

\renewcommand{\subset}{\subseteq}
\newcommand{\power}{\wp}
\newcommand{\uphar}{\mathbin{\upharpoonright}}
\newcommand{\init}{\trianglelefteq}

\DeclareMathOperator{\dom}{dom}

\DeclareMathOperator{\crit}{crit}
\DeclareMathOperator{\lh}{lh}
\DeclareMathOperator{\ord}{Ord}
\DeclareMathOperator{\ran}{ran}
\DeclareMathOperator{\ult}{Ult}

\title{Long games just beyond fixed countable length}

\author{Takehiko Gappo}
\address{Takehiko Gappo, Institut f\"{u}r Diskrete Mathematik und Geometrie, TU Wien, Wiedner Hauptstra{\ss}e 8-10/104, 1040 Wien, Austria.}
\email{takehiko.gappo@tuwien.ac.at}

\author{Sandra M\"uller}
\address{Sandra M\"uller, Institut f\"ur Diskrete Mathematik und Geometrie, TU Wien, Wiedner Hauptstra{\ss}e 8-10/104, 1040 Wien, Austria.}
\email{sandra.mueller@tuwien.ac.at}

\subjclass[2020]{03E60, 03E55, 03E45}
\keywords{Long games, games of variable countable length, generalized Solovay models.}
\date{January 7, 2026}

\begin{document}

    \maketitle

    \begin{abstract}
        We introduce a new type of game on natural numbers of variable countable length, which can be regarded as a diagonalization of all games of fixed countable length on natural numbers. Building on previous work by Trang and Woodin, we show that analytic determinacy of the game is equivalent to the existence of a sharp for a canonical inner model with a limit of Woodin cardinals $\lambda$ such that the order type of Woodin cardinals below $\lambda$ is $\lambda$.
    \end{abstract}

    \setcounter{tocdepth}{1}
    \tableofcontents

    \section{Introduction}

    Advances in inner model theory over the last 40 years have revealed deep connections between determinacy principles and large cardinals. While the Axiom of Determinacy ($\mathsf{AD}$) contradicts the Axiom of Choice, $\mathsf{ZFC}$ together with the existence of large cardinals implies the determinacy of definable sets of reals. For instance, Martin–Steel \cite{MS89} showed that the determinacy of all $\mathbf{\Pi}^1_{n+1}$ sets of reals follows from the existence of $n$ Woodin cardinals and a measurable cardinal above them. Neeman \cite{OptDet1} and Woodin \cite{MSW20} strengthened this result by establishing an equivalence between $\mathbf{\Pi}^1_{n+1}$ determinacy and the existence of sharps for canonical inner models with $n$ Woodin cardinals (see \cref{HMNW}).

    In order to obtain further equivalences between determinacy and the existence of sharps for inner models with large cardinals, it has become clear that \emph{long games}, whose lengths exceed $\omega$, play an essential role. Long games form a natural hierarchy ordered by their lengths, which can be classified into three categories:
    \begin{enumerate}
        \item Games of fixed countable length.
        \item Games of variable countable length, where the length of a run is always countable but depends on the players' moves.
        \item Games of length $\omega_1$.
    \end{enumerate}
    Building on Martin--Steel's proof of projective determinacy, Neeman developed general methods for proving long game determinacy from large cardinal assumptions in the late 1990s and early 2000s. His extensive work covers various long games from all three categories above, including games ending at the first admissible ordinal relative to the play \cite{adm}, games of continuously coded length \cite[Chapter 3]{Neemanbook}, games at a locally uncountable ordinal \cite[Chapter 7]{Neemanbook}, and games of open length $\omega_1$ \cite{Games_of_length_omega_1}. Although this is almost the entire list of long games studied so far, it is natural to expect that the hierarchy of long games is as rich as the hierarchy of large cardinals. In fact, Neeman's methods suggest how the complexity of winning strategies in long games mirrors the complexity of canonical inner models with large cardinals.

    This expectation has so far been confirmed only for games of fixed countable length, through level-by-level equivalence results between determinacy and the existence of sharps (see \cref{Trang-Woodin,Trang-Woodin2}). In the present paper, we introduce a new type of game of variable countable length and provide the first example of an equivalence between determinacy of variable countable length and a sharp for canonical inner models. Our work can be seen as a first step toward filling the substantial gaps in the hierarchy of games of variable countable length. In what follows, we introduce the necessary definitions and notation to state and motivate our main result.

    For any countable ordinal $\alpha<\omega_1$ and $A\subset\omega^\alpha$, the Gale--Stewart game $G_{\omega}(A)$ on $\mathbb{N}$ of length $\alpha$ with payoff set $A$ is defined as follows: two players take turns choosing natural numbers. Player I wins the game if the resulting sequence $\langle n_i\mid i<\alpha\rangle$ is in $A$; otherwise, Player II wins. We say that $G_{\alpha}(A)$, or simply $A$, is determined if one of the players has a winning strategy in the game $G_{\alpha}(A)$. The Axiom of Determinacy ($\mathsf{AD}$) is the statement that the games $G_\omega(A)$ are determined for all $A\subset\omega^\omega$.

    \begin{table}[ht]
        \centering
        \begin{tabular}{l||llllll}
            \,I  & $n_0$ &      & $n_2$ &     & $\cdots$ &       \\ \hline
            II &      & $n_1$ &      & $n_3$ &          & $\cdots$
        \end{tabular}
    \end{table}

    Now we precisely state the level-by-level equivalence results between determinacy of sets of reals and the existence of sharps.
    
    \begin{thm}[Harrington \cite{Har78} \& Martin \cite{meas_and_analytic_games} for $n=0$; Neeman \cite{OptDet1} \& Woodin \cite{MSW20} for $n\geq 1$]\label{HMNW}
    Let $n<\omega$. Then the following are equivalent over $\mathsf{ZFC}$:
    \begin{enumerate}
        \item For all $x\in\mathbb{R}$, there exists an $\omega_1$-iterable active $x$-premouse with $n$ many Woodin cardinals.
        \item All $\mathbf{\Pi}^1_{n+1}$ sets of reals are determined.
    \end{enumerate}
    \end{thm}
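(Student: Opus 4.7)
The equivalence in \cref{HMNW} splits into two directions of quite different character, and I would attack each with the now-standard machinery.

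\textbf{Direction $(1)\Rightarrow(2)$.} Fix $x\in\mathbb{R}$ and a $\mathbf{\Pi}^1_{n+1}(x)$ set $A\subset\mathbb{R}$. The plan is to adapt the Martin--Steel--Neeman integration method. From $\mathcal{M}_n^\#(x)$ one obtains, by iterating the top measure out, an $\omega_1$-iterable proper class model $\mathcal{M}_n(x)$ with $n$ Woodin cardinals $\delta_0<\cdots<\delta_{n-1}$ and a measurable cardinal above them. Inside $\mathcal{M}_n(x)$, the standard inductive Suslin-representation construction (homogeneity below the least Woodin, then iterating the representation up through each $\delta_i$) presents $A$ as the projection of a tree $T\in\mathcal{M}_n(x)$ whose complement is the projection of a tree $U\in\mathcal{M}_n(x)$. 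The core work is then to set up Neeman's \emph{auxiliary game}: a closed game on $\omega$ in which the players, alongside their natural numbers, play certificates consisting of iteration trees on $\mathcal{M}_n(x)$ and descending sequences through $T$ or $U$. The key lemma, proved by induction on $n$, is that iterability of $\mathcal{M}_n^\#(x)$ together with the Woodinness of the $\delta_i$ allows the appropriate player to maintain a legal certificate, and that any full run of the auxiliary game projects to a winning run in $G_\omega(A)$. The base case $n=0$ reduces to Martin's original argument, with Silver indiscernibles supplied by $x^\#$ playing the role that the measurable and Woodins play at higher levels.

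\textbf{Direction $(2)\Rightarrow(1)$.} This requires a core model induction. Assume $\mathbf{\Pi}^1_{n+1}$-determinacy and fix a real $x$; the goal is to produce $\mathcal{M}_n^\#(x)$. I would proceed by induction on $k\le n$, constructing $\mathcal{M}_k^\#(y)$ for all reals $y$ and verifying full $\omega_1$-iterability. At each stage, the determinacy hypothesis (together with the regularity consequences it yields for $\mathbf{\Pi}^1_{k+1}$ sets --- scales, uniformization, and Martin's cone-style absoluteness) is used to drive a $K^c$-construction relative to $y$ and to certify that the background Woodins it produces are genuinely Woodin. If the construction failed, one would derive from this failure a canonical $\mathbf{\Pi}^1_{k+1}$ singleton of impossibly high complexity, contradicting the boundedness consequences of the determinacy assumption. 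The base case $n=0$ is Harrington's theorem, which admits a direct effective-descriptive-set-theoretic proof: $\mathbf{\Pi}^1_1$-determinacy is used to force $\aleph_1^{L[x]}<\aleph_1$, and thence $x^\#$, via a $\mathbf{\Sigma}^1_3$-singleton argument.

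\textbf{Main obstacle.} The $(1)\Rightarrow(2)$ direction is technical but conceptually linear once the auxiliary-games framework is in place. The genuine difficulty lies in $(2)\Rightarrow(1)$, and specifically in maintaining iterability throughout the $K^c$-construction when only determinacy, not any background measurable, is available as input. The heart of the matter is certifying branch existence and $Q$-structure uniqueness for the iteration trees that arise during the induction, which ultimately rests on the mouse-set theorems identifying the reals of $\mathcal{M}_k^\#$ with a boldface pointclass at the appropriate level of the projective hierarchy. Overcoming this is Woodin's main contribution, and is the only step that requires substantially new ideas beyond the generalities of Neeman's framework.
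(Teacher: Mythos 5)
The paper does not prove \cref{HMNW} at all: it is quoted as a known background result, with the determinacy-from-mice direction attributed to Martin ($n=0$) and Neeman ($n\geq 1$) and the mice-from-determinacy direction to Harrington ($n=0$) and Woodin (as published in \cite{MSW20}), so there is no in-paper argument to compare yours against. Measured against those literature proofs, your outline has the right overall shape --- Neeman-style auxiliary games with iteration-tree certificates for $(1)\Rightarrow(2)$, and an induction on $n$ through core model theory plus $Q$-structure-guided iterability for $(2)\Rightarrow(1)$ --- but it contains two concrete defects.

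First, the base case of $(2)\Rightarrow(1)$ as you state it does not work: $\aleph_1^{L[x]}<\aleph_1$, even for all reals $x$, is far weaker than the existence of $x^\sharp$ and cannot yield it. Harrington's argument \cite{Har78} instead extracts from $\mathbf{\Pi}^1_1$-determinacy (via Steel forcing) the statement that every $x$-admissible ordinal above a suitable point is a cardinal of $L[x]$, and it is this principle that produces the nontrivial elementary embedding; the paper itself replays exactly this step in \cref{HP_implies_sharp}. Second, in $(1)\Rightarrow(2)$ you cannot take trees $T,U\in\mathcal{M}_n(x)$ witnessing a Suslin/co-Suslin representation inside the mouse and assert that they project in $V$ to $A$ and its complement: a countable $\omega_1$-iterable mouse is only projectively correct up to a bounded level, and the whole point of Neeman's proof \cite{OptDet1} is to replace genuine homogeneity in $V$ (as in Martin--Steel \cite{MS89}, which needs actual Woodin cardinals) by genericity iterations of the countable model, with the $\mathbf{\Pi}^1_1$ matrix checked in iterates; your auxiliary-game paragraph is compatible with this, but the Suslin-representation step as written would fail. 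Finally, calling the $(2)\Rightarrow(1)$ argument of \cite{MSW20} a ``core model induction'' overstates it: it is an induction on $n$ using $K^c$ and $K$ relative to a real, where failure of the construction yields a $\Sigma^1_{n+2}$-definable wellordered sequence of reals contradicting regularity consequences of the determinacy hypothesis (rather than a $\Pi^1_{k+1}$-singleton/boundedness contradiction), together with the separate --- and, as you rightly emphasize, hardest --- argument that $\mathbf{\Pi}^1_{n+1}$-determinacy certifies $\omega_1$-iterability via $Q$-structures built from the previous level $\mathcal{M}_{n-1}^{\sharp}$.
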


    \begin{thm}[Martin--Steel \cite{MS08} \& Woodin\footnotemark]\label{DetL(R)}
        The following are equivalent over $\mathsf{ZFC}$:
        \begin{enumerate}
            \item For all $x\in\mathbb{R}$, there exists an $\omega_1$-iterable active $x$-premouse with $n$ many Woodin cardinals.
            \item $\mathsf{AD}^{L(\mathbb{R})}$ holds and $\mathbb{R}^\sharp$ exists.
        \end{enumerate}
    \end{thm}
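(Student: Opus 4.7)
The plan is to prove the two directions separately. Direction (1) $\Rightarrow$ (2) will essentially be Woodin's derived model theorem packaged with a sharp-reflection argument, while the reverse direction (2) $\Rightarrow$ (1) is the genuinely hard part and will proceed by a core model induction.

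For (1) $\Rightarrow$ (2), I fix $x\in\mathbb{R}$ and let $M$ denote the active $x$-premouse with $\omega$ Woodin cardinals given by (1). First I would iterate $M$ by its top extender, sending the images of the Woodin cardinals below an arbitrarily large measurable cardinal, to obtain an iterate $M^*$. Inside $M^*$, letting $\lambda$ be the supremum of the Woodins, I apply Woodin's derived model theorem: in any $M^*$-generic extension by $\mathrm{Col}(\omega,{<}\lambda)$, the symmetric reals $\mathbb{R}^*$ satisfy $L(\mathbb{R}^*)\models\mathsf{AD}$. A $\Sigma^2_1$-absoluteness argument, using that every real of $V$ is captured by such a derived model via a countable iteration of $M$, then transfers $\mathsf{AD}$ down to $L(\mathbb{R})^V$. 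For the existence of $\mathbb{R}^\sharp$, I would use the top measures of the mice $M^\sharp_\omega(x)$ uniformly in $x$ to produce Silver indiscernibles for $L(\mathbb{R})$, generalizing the classical derivation of $0^\sharp$ from a measurable cardinal.

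For (2) $\Rightarrow$ (1), the plan is to run a core model induction under $\mathsf{AD}^{L(\mathbb{R})}$ together with the existence of $\mathbb{R}^\sharp$. At each stage $\alpha<\Theta^{L(\mathbb{R})}$ one produces iteration strategies for hybrid mice capturing the pointclass of Wadge rank $\alpha$; comparison and $\Sigma^2_1$-correctness of $L(\mathbb{R})$ push the induction through successor stages, while determinacy inside $L(\mathbb{R})$ handles the intermediate limit stages. The crucial role of $\mathbb{R}^\sharp$ is to supply indiscernibles that let the induction pass $\Theta^{L(\mathbb{R})}$ and perform the final step yielding an $\omega_1$-iterable $M^\sharp_\omega(x)$ for each real $x$. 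Full iterability at every stage is established by the classical dichotomy: if the desired conclusion fails at some real $x$, one derives a failure of covering for the relevant core model and contradicts a consequence of $\mathbb{R}^\sharp$.

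The main obstacle is this reverse direction, and specifically the step of lifting iteration strategies that are definable inside $L(\mathbb{R})$ to genuine $\omega_1$-iteration strategies in $V$ which are absolute enough to be fed back into the next stage of the induction. Certifying this correctness requires the full fine-structural machinery of \cite{MS08} together with Woodin's unpublished results on hybrid/strategy mice; a self-contained write-up would therefore be primarily an expository repackaging of that machinery rather than a new technical contribution, and I would simply cite the relevant sources once the high-level structure above is in place.
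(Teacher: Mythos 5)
First, a reading note: clause (1) of \cref{DetL(R)} should say ``$\omega$ many Woodin cardinals'' (the ``$n$'' is carried over from \cref{HMNW}), and you read it in the intended way. The paper does not prove this theorem; it is quoted from Martin--Steel and Woodin, with the hard direction written up in Trang's work \cite{TrangThesis,Tr15}, and that same argument is what the paper adapts in \cref{mouse_from_DetSol} for $L(\mathbb{R},\mu_{<\omega_1})$. Your direction (1)$\Rightarrow$(2) is the standard derived-model route and is fine in outline, with two caveats: iterating the top extender of the active mouse does not ``send the images of the Woodin cardinals below a measurable'' --- the critical point of the top extender lies above all the Woodins, and the point of iterating out is only to obtain a class-sized model --- and transferring $\mathsf{AD}$ to $L(\mathbb{R})^V$ requires genericity iterations of length $\omega_1$ (or reflection to countable hulls) realizing $\mathbb{R}^V$ as the symmetric reals of an iterate, not merely capturing individual reals; it is this same iteration that lets the top measure yield indiscernibles for $L(\mathbb{R})^V$ and hence $\mathbb{R}^\sharp$.

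The genuine gap is in (2)$\Rightarrow$(1). The known proof (Woodin's, and the one this paper's \cref{mouse_from_DetSol} adapts) is not a core model induction: one works inside $L(\mathbb{R})$, uses $\Sigma_1$-reflection to a least bad level, obtains a self-justifying system sealing the envelope of $(\Sigma^2_1)^{L(\mathbb{R})}$ --- this is precisely where $\mathbb{R}^\sharp$ enters, giving that $\power(\mathbb{R})\cap L(\mathbb{R})$ is weakly scaled (compare \cref{existence_of_sjs}) --- builds $\Gamma$-suitable premice that are $A$-iterable for ordinal-definable sets $A$ via fully backgrounded constructions inside coarse Woodin mice supplied by the $\mathsf{AD}^+$ theory (compare \cref{capturing_OD_sets}), produces a normally $\omega_1$-iterable suitable premouse by a limit-branch construction using the self-justifying system (compare \cref{limit_branch_construction}), and finally applies the criterion of \cite{FNS10} to get the $\omega_1$-iterable active mouse. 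Your sketch replaces this with ``hybrid mice capturing each Wadge rank'' plus a covering-based dichotomy, but no hybrid or strategy mice are needed at the level of $L(\mathbb{R})$, there is no core-model/covering theory at the level of $\omega$ Woodin cardinals to which ``failure of covering for the relevant core model'' could refer, and $\mathbb{R}^\sharp$ is not used as an anti-covering hypothesis: its role is to provide scales beyond $L(\mathbb{R})$'s own $\Sigma^2_1$, which is what makes the condensation and branch-limit arguments go through. Similarly, ``letting the induction pass $\Theta^{L(\mathbb{R})}$ by indiscernibles'' is not a step one can execute as stated. So the hard direction remains unproved in your proposal, and deferring to \cite{MS08} (which concerns scales in $L(\mathbb{R})$) together with unspecified unpublished machinery does not fill it.
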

    \footnotetext{Woodin himself did not publish his proof, but one can find it in Trang's PhD thesis \cite{Tr15}. We also use the argument in \cref{mouse_from_DetSol}.}

    It is worth noting that long game determinacy can be added to both theorems. Indeed, the proof of \cref{HMNW} shows that the determinacy assumption in the theorem is equivalent to $\mathbf{\Pi}^1_1$ determinacy of games on $\mathbb{N}$ of length $\omega\cdot(n+1)$. Moreover, the clauses of \cref{DetL(R)} are equivalent to $\mathbf{\Pi}^1_1$ determinacy of games on $\mathbb{N}$ of length $\omega\cdot\omega$. Neeman adapted his determinacy proof of \cref{HMNW} for games of fixed countable length to obtain the following result.

    \begin{thm}[Neeman {\cite[Chapter 2]{Neemanbook}}]\label{Neeman}\leavevmode
        Let $1\leq\alpha<\omega_1$ and assume that there is an $\omega_1$-iterable active premouse with $\alpha$ many Woodin cardinals. Then the games $G_{\omega\cdot\alpha}(A)$ are determined for all $A\subset\omega^{\omega\cdot\alpha}$ that are ${<}\omega^2$-$\mathbf{\Pi}^1_1$ in the codes.\footnotemark
    \end{thm}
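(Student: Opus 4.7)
The plan is to follow Neeman's ``integration'' template for long games, in which the $\alpha$ Woodin cardinals of the given premouse $M$ are used one at a time to run an auxiliary Martin--Steel style argument over each successive block of $\omega$ moves. Fix the $\omega_1$-iterable active premouse $M$ with Woodin cardinals $\langle\delta_\beta\mid\beta<\alpha\rangle$ and let $E$ denote its top extender, with $\crit(E)=\kappa>\sup_{\beta<\alpha}\delta_\beta$.

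First, I would define an auxiliary game $G^*$ of length $\omega\cdot\alpha$ in which, alongside the natural-number moves of $G_{\omega\cdot\alpha}(A)$, the two players collaboratively build a countable iteration tree $\mathcal{T}$ on $M$ (with one player choosing extenders and the other choosing cofinal branches, in the Martin--Steel pattern). The $\beta$-th block of $\omega$ natural-number moves is attached to the image $\delta_\beta^\beta=i_{0,\beta}^{\mathcal{T}}(\delta_\beta)$ of the Woodin cardinal $\delta_\beta$ in the current iterate $M_\beta$, so that the defending player runs Martin--Steel's extender/branch game at $\delta_\beta^\beta$ during that block while copying the opponent's integers forward. After round $\beta$ the next iterate $M_{\beta+1}$ is formed, with remaining Woodin cardinals $\{\delta_\gamma^{\beta+1}\mid\beta+1\leq\gamma<\alpha\}$, and at countable limit rounds $\lambda\leq\alpha$ one takes the direct limit along $\mathcal{T}\uphar\omega\cdot\lambda$, invoking $\omega_1$-iterability of $M$ to ensure wellfoundedness of $M_\lambda$ and preservation of the remaining Woodins.

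Once the full run of length $\omega\cdot\alpha$ is completed, the resulting real $y$ coding $\langle n_i\mid i<\omega\cdot\alpha\rangle$ is checked against the ${<}\omega^2$-$\mathbf{\Pi}^1_1$ payoff. The top extender $E$, pushed forward by $i_{0,\alpha}^{\mathcal{T}}$, provides the homogeneous tree representation of $\mathbf{\Pi}^1_1$ used by Martin--Steel at the final Woodin step; the upgrade from $\mathbf{\Pi}^1_1$ to ${<}\omega^2$-$\mathbf{\Pi}^1_1$ is handled by an extra layer of bookkeeping moves in which the open player must commit to a strictly decreasing ordinal rank through the Boolean difference hierarchy. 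Because that hierarchy has length less than $\omega^2$, these rank-descent commitments can be distributed across the $\omega$-blocks of the auxiliary game without consuming an additional Woodin cardinal. The last step is to verify that a winning strategy for the relevant player in $G^*$ projects, by forgetting the iteration-tree data, to a winning strategy in $G_{\omega\cdot\alpha}(A)$.

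The main obstacle, as in all of Neeman's long-game determinacy proofs, is the coordination at countable limit rounds $\lambda\leq\alpha$: one must arrange that the per-round extender/branch games fit together into a single coherent iteration tree on $M$ so that $\omega_1$-iterability can be invoked at $\lambda$, and one must verify that each $\delta_\gamma$ for $\gamma\geq\lambda$ survives as a Woodin cardinal of $M_\lambda$ so that the construction continues. This requires careful normalization of copy maps and a direct-limit analysis at each $\lambda$; everything else is a direct generalization of the $\alpha=1$ case, which is essentially Martin--Steel projective determinacy combined with the homogeneous-tree argument for $\mathbf{\Pi}^1_1$.
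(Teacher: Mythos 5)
Your outline identifies some of the right raw ingredients (one Woodin per $\omega$-block, iteration trees whose limit models must be wellfounded, the active/top extender being needed for the boost past plain $\mathbf{\Pi}^1_1$), but it is missing the central mechanism of Neeman's proof, which the paper only cites but whose architecture is visible in the adaptation sketched for \cref{LGD_from_LC}. The heart of the matter is not ``run Martin--Steel at $\delta_\beta$ in block $\beta$ and copy integers forward'': it is how the payoff constraint on the \emph{later} blocks is represented inside the iterate so that strategies can be stitched across blocks. Neeman does this with genericity-iteration games $\hat{G}_{\mathrm{fixed}}(M,\delta,\theta,\dot{A})$ whose payoff is a \emph{name} $\dot{A}$ for a set of reals in a collapse extension, together with an internal trichotomy (\cref{main_thm_on_games_of_fixed_length}): either $M$ satisfies a definable condition $\phi_{\mathrm{I}}$ making I win $\hat{G}$, or $\phi_{\mathrm{II}}$ making II win the mirror game, or some real escapes both names. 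The names $\dot{A}_k$ and the finite games $G_k(\vec{x},P,\gamma)$ are then defined by a simultaneous induction on an ordinal parameter, and the existence of a winner for the whole long game is extracted using a pair of local indiscernibles $\gamma_L<\gamma_H$. Your proposal never says how the auxiliary payoffs for block $\beta$ are defined from the original $A$, nor why one of the players wins your game $G^*$, nor why a $G^*$-strategy projects to a $G_{\omega\cdot\alpha}(A)$-strategy; these are exactly the steps carried by the name/trichotomy/ordinal-induction machinery, and they do not follow from homogeneous-tree considerations or from ``careful normalization of copy maps'' at limits.

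The second genuine gap is your treatment of ${<}\omega^2$-$\mathbf{\Pi}^1_1$. A ``strictly decreasing rank through the difference hierarchy'' committed to by the open player is not a workable device here: membership of the full run in each $\mathbf{\Pi}^1_1$ (in the codes) set $C_\xi$ cannot be certified block by block by rank descent, since the wellorders $R_\xi[\vec{y}]$ are only completed at the end of the run. The actual mechanism is Martin's open game on ordinals $\mathcal{C}[\vec{y}]$: each $R_\xi[\vec{y}]$ must be embedded Lipschitz-continuously into ordinal intervals $[u_n,u_{n+1})$ determined by the uniform Silver indiscernibles of the premouse (one interval per $\omega$-chunk of the hierarchy of length $\alpha<\omega^2$), with the parity condition encoding the difference hierarchy, and with the shifting/indiscernibility facts of \cref{Martin_game_facts} guaranteeing that a sequence of good positions certifies $\vec{y}\in C$. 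This is precisely where the hypothesis that the premouse is \emph{active} enters (its sharp supplies the indiscernibles), not as a homogeneous-tree representation ``at the final Woodin step'' as you suggest. So while your block-by-block picture is pointed in the right general direction, the proposal as written is not a proof: it omits the inductive definition of the auxiliary payoffs via names, the internal determinacy trichotomy, the local-indiscernible induction that produces a winner, and the indiscernible-based ordinal game that handles the ${<}\omega^2$-$\mathbf{\Pi}^1_1$ payoff.
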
\footnotetext{Using a bijection between $\omega$ and $\alpha$, one can find a bijection $\pi\colon\omega^{\omega\cdot\alpha}\to\omega^\omega$. Given a pointclass $\mathbf{\Gamma}$, we say that $A\subset\omega^{\omega\cdot\alpha}$ is \emph{$\mathbf{\Gamma}$ in the codes} if $\pi[A]$ is in $\mathbf{\Gamma}$.}

    The converse of this theorem is also true: For additively indecomposable $\alpha$, it is due to Trang--Woodin \cite{TrangThesis}. The additively decomposable cases are due to Aguilera--M\"uller \cite{AM20} and Aguilera--Gappo \cite{AG26}. Here we focus on explaining Trang--Woodin's work, which our main result heavily relies on.
    
    Let $\mathcal{F}$ be the club filter on $\wp_{\omega_1}(\mathbb{R})$. The model $L(\mathbb{R}, \mu) := L(\mathbb{R})[\mathcal{F}]$, where $\mu=\mathcal{F}\cap L(\mathbb{R})[\mathcal{F}]$, is called the Solovay model\footnote{Note that this Solovay model is not the same as Solovay's well-known model of $\mathsf{ZF}+\mathsf{DC}$, where all sets of reals have perfect set property, the Baire property, and are Lebesgue measurable.}, which first appeared in Solovay's work on $\mathbb{R}$-supercompactness of $\omega_1$ under determinacy \cite{Independence_DC}.  Trang and Woodin obtained the following equivalence based on the HOD analysis of the Solovay model.

    \begin{thm}[Neeman \cite{Neemanbook} \& Trang--Woodin \cite{TrangThesis}]\label{Trang-Woodin}
    The following are equivalent:
    \begin{enumerate}
        \item There is an $\omega_1$-iterable active premouse with $\omega^2$ Woodin cardinals.
        \item The games $G_{\omega^3}(A)$ are determined for all $A$ that are ${<}\omega^2$-$\mathbf{\Pi}^1_1$ in the codes.
        \item $L(\mathbb{R}, \mu)\models$``$\mathsf{AD} + \mu$ is an ultrafilter'' and a sharp for $L(\mathbb{R}, \mu)$ exists.
    \end{enumerate}
    \end{thm}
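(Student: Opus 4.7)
The plan is to establish the three equivalences through the cycle $(1) \Rightarrow (2) \Rightarrow (3) \Rightarrow (1)$. The implication $(1) \Rightarrow (2)$ is immediate from Neeman's \cref{Neeman} applied with $\alpha = \omega^2$, since $\omega\cdot\omega^2 = \omega^3$.

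For $(2) \Rightarrow (3)$, the strategy is to exploit the fact that a play of a game of length $\omega^3$ naturally decomposes into $\omega^2$ blocks of length $\omega$, each coding a real, so a run can be read as a countable sequence of reals and hence as an element $\sigma \in \wp_{\omega_1}(\mathbb{R})$ together with enough bookkeeping to name elements of $L(\sigma)$. The ${<}\omega^2$-$\mathbf{\Pi}^1_1$ payoff class is rich enough to encode: (i) any given set $A \in L(\mathbb{R},\mu)$ definable from a real and a club in $\wp_{\omega_1}(\mathbb{R})$, so that determinacy of such games yields $\mathsf{AD}^{L(\mathbb{R},\mu)}$; (ii) the condition that the $\sigma$ read off from a run lies in a specified club $C$ in $L(\mathbb{R},\mu)$, which combined with $\mathsf{AD}^{L(\mathbb{R},\mu)}$ gives that $\mu$ is an ultrafilter in $L(\mathbb{R},\mu)$; and (iii) Silver-style indiscernibility assertions for ordinal-definable Skolem functions of $L(\mathbb{R},\mu)$, which (through a homogeneity argument run on suitable auxiliary games) produce $L(\mathbb{R},\mu)^\sharp$.

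For $(3) \Rightarrow (1)$, I would carry out the HOD analysis inside $L(\mathbb{R},\mu)$. Under $\mathsf{AD}^{L(\mathbb{R},\mu)}$ with $\mu$ an ultrafilter, the Steel--Woodin style analysis should represent $\mathrm{HOD}^{L(\mathbb{R},\mu)}$ (or a closely related hybrid model incorporating the measure) as a fine-structural premouse. The key point is that $\mu$ raises the number of Woodin cardinals from $\omega$ (as in $\mathrm{HOD}^{L(\mathbb{R})}$) to $\omega^2$: heuristically, $\mu$ reflects $\omega_1$-supercompactness-like behaviour of $\omega_1^{L(\mathbb{R},\mu)}$, so each of the $\omega$ Woodins in $\mathrm{HOD}^{L(\mathbb{R})}$ is expanded into an $\omega$-block. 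The existence of $L(\mathbb{R},\mu)^\sharp$ then supplies, via the standard translation between sharps and active premice, an $\omega_1$-iterable active premouse extending $\mathrm{HOD}^{L(\mathbb{R},\mu)}$ above its $\omega^2$ Woodins, giving (1).

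The main obstacle is the $\omega^2$ Woodin count in $(3) \Rightarrow (1)$: verifying that the fine-structural HOD analysis, once extended to accommodate the measure $\mu$, yields exactly $\omega^2$ Woodin cardinals, neither fewer nor more. Pinning this down requires a careful scales analysis for the pointclasses on $L(\mathbb{R},\mu)$ that are definable from $\mu$, and matching the resulting Wadge hierarchy against the fine structure of the extender sequence. This is the core technical contribution of Trang--Woodin's work that our main theorem will later extend to a more intricate setting of games of variable countable length.
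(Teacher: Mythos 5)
Your overall architecture --- the cycle $(1)\Rightarrow(2)\Rightarrow(3)\Rightarrow(1)$, with $(1)\Rightarrow(2)$ immediate from \cref{Neeman} at $\alpha=\omega^2$ --- matches how the paper treats this result (it is cited, not reproved; the closest in-paper analogues are the arguments for $G_\Delta$ in Sections 2--4). The step $(1)\Rightarrow(2)$ is fine, and your $(3)\Rightarrow(1)$ sketch is in the right spirit: the Trang--Woodin argument is indeed a HOD-analysis-style construction, carried out in the modern form via $\Gamma$-suitable premice, term-relation ($A$-)iterability and direct limits (compare \cref{capturing_OD_sets} and \cref{limit_branch_construction} for the diagonal case), although there the sharp's role is to supply scales/self-justifying systems beyond $\Sigma^2_1$ (as in \cref{existence_of_sjs}) rather than a direct ``sharp-to-active-premouse translation,'' and one never needs to rule out \emph{more} than $\omega^2$ Woodin cardinals.

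The genuine gap is in $(2)\Rightarrow(3)$. Your item (i) asserts that the ${<}\omega^2$-$\mathbf{\Pi}^1_1$ payoff class is ``rich enough to encode any given set $A\in L(\mathbb{R},\mu)$''; this is false as stated, since sets of reals in $L(\mathbb{R},\mu)$ lie far beyond any fixed level of the difference hierarchy over $\mathbf{\Pi}^1_1$, and no direct coding of the payoff is possible. The known argument (and the one run for $G_\Delta$ in \cref{DetSol_from_LongGame}) instead uses a Friedman-style model-existence game: the long run produces a sequence $g$ of countable sets of reals, one player additionally plays the \emph{theory} of a minimal level of the $g$-derived Solovay-type model, the winning condition on theories is simple enough to be $\mathbf{\Pi}^1_1$-ish in the codes, and a reflection argument through a continuous chain of countable elementary submodels identifies the model coded by the run with $\pi_\alpha^{-1}(L_{\gamma^*+1}(\mathbb{R},\mu))$ at a minimal counterexample $\gamma^*$; this is the missing key idea. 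Likewise, ultrafilterness of $\mu$ is obtained by a shift-invariance argument on the blocks of the run (the $g^{(n)}$ trick), not by demanding that the produced $\sigma$ lie in a prescribed club; and the sharp in (iii) is not obtained from indiscernibility/homogeneity considerations but by a separate game in which Player II plays theories of possibly ill-founded $\omega$-models, $\mathbf{\Sigma}^1_1$-boundedness rules out a winning strategy for Player I, and Harrington's principle is verified via Jensen/Friedman's model existence theorem together with a back-and-forth construction of mutually generic collapses (compare \cref{SharpSol_from_LongGame}). Without these mechanisms your outline of $(2)\Rightarrow(3)$ does not go through.
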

    
    To generalize \cref{Trang-Woodin} for the games of length $\omega^{\alpha}$ for all $\alpha<\omega_1$, Trang and Woodin used \emph{generalized Solovay models} introduced by Woodin. Using the ``club'' filters $\mathcal{F}_\alpha$ on $[\power_{\omega_1}(\mathbb{R})]^{\omega^\alpha}$ (see \cref{Sol_filters}), we write
    \[
    L(\mathbb{R}, \mu_{<\alpha}) := L(\mathbb{R})[\mathcal{F}_{<\alpha}],
    \]
    where
    \[
    \mathcal{F}_{<\alpha} = \{\langle\beta, Z\rangle\mid\beta<\alpha\land Z\in \mathcal{F}_{\beta}\}.
    \]
    Also, for each $\beta < \alpha$, let $\mu_{\beta} = \mathcal{F}_{\beta}\cap L(\mathbb{R}, \mu_{<\alpha})$. Note that $L(\mathbb{R}, \mu) = L(\mathbb{R}, \mu_{<1})$.
    
    \begin{thm}[Neeman \cite{Neemanbook} \& Trang--Woodin \cite{TrangThesis}]\label{Trang-Woodin2}
        For each $2\leq\alpha<\omega_1$, the following are equivalent:
        \begin{enumerate}
            \item For some (any) $x\in\mathbb{R}$ coding $\alpha$, there exists an $\omega_1$-iterable active $x$-premouse with $\omega^\alpha$ Woodin cardinals.
            \item The games $G_{\omega^{1+\alpha}}(A)$ are determined for all $A$ that are ${<}\omega^2$-$\mathbf{\Pi}^1_1$ in the codes.
            \item $L(\mathbb{R}, \mu_{<-1+\alpha})\models$``$\mathsf{AD} +\forall\beta<-1+\alpha\,(\mu_\beta$ is an ultrafilter on $[\power_{\omega_1}(\mathbb{R})]^{\omega^\beta})$'' and a sharp for $L(\mathbb{R}, \mu_{<-1+\alpha})$ exists.
        \end{enumerate}
    \end{thm}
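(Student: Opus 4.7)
The plan is to prove the three implications $(1) \Rightarrow (2) \Rightarrow (3) \Rightarrow (1)$ by transfinite induction on $\alpha \geq 2$, taking \cref{Trang-Woodin} as the base case; for $\alpha = 2$ we have $-1+\alpha = 1$, $\omega^{1+\alpha} = \omega^3$, and $L(\mathbb{R},\mu_{<1}) = L(\mathbb{R},\mu)$, so the statements line up. The implication $(1) \Rightarrow (2)$ is immediate from \cref{Neeman}: an $\omega_1$-iterable active $x$-premouse with $\omega^\alpha$ Woodins yields determinacy of games of length $\omega\cdot\omega^\alpha = \omega^{1+\alpha}$ in the ${<}\omega^2$-$\mathbf{\Pi}^1_1$ pointclass.

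For $(3) \Rightarrow (1)$, I would perform a HOD analysis inside $M := L(\mathbb{R}, \mu_{<-1+\alpha})$. Under $\mathsf{AD}$ together with the assumption that every $\mu_\beta$ is an ultrafilter, the standard derived-model and direct-limit-of-mice machinery, adapted so that each $\mu_\beta$ is encoded as an additional predicate in the hybrid premouse construction, produces in $\mathrm{HOD}^M$ a premouse in which each $\mu_\beta$ is witnessed by a block of $\omega^{1+\beta}$ Woodin cardinals, for a total of $\omega^\alpha$ Woodins. The assumed sharp for $M$ then lifts to $\omega_1$-iterability of this premouse once one relativizes to a real $x$ coding $\alpha$, producing the object required in clause (1).

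The main direction, and where the bulk of the work lies, is $(2) \Rightarrow (3)$. I would first exploit the multiplicative decompositions $\omega^{1+\alpha} = \omega^{1+\beta}\cdot\omega^{\gamma}$ available for each $\beta < -1+\alpha$ to reinterpret a run of $G_{\omega^{1+\alpha}}$ as an iterated game: a block of $\omega^{1+\beta}$ moves simulates one play against a strategy over some countable $X \in [\power_{\omega_1}(\mathbb{R})]^{\omega^\beta}$, while the remaining $\omega^\gamma$ moves enumerate a cofinal subsequence of such $X$. Reading winning strategies in the long games through this reinterpretation gives both $\mathsf{AD}$ in $M$ and the fact that every $\mu_\beta$ is an ultrafilter, following the Trang--Woodin template for $\alpha = 1$. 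To construct the sharp for $M$, one would then code into a single long game the construction of a Skolem hull of $M$ together with its transitive collapse and Silver-style indiscernibles, arranging that the indiscernibles are coherent across all the filters $\mu_\beta$ simultaneously.

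The hardest step I expect is precisely this coherence issue at limit $\alpha$: the inductive hypothesis supplies sharps for each $L(\mathbb{R}, \mu_{<-1+\beta})$ with $\beta < \alpha$, but assembling them into a single sharp for $M$ is not automatic and should require the full strength of length-$\omega^{1+\alpha}$ determinacy rather than merely the supremum of the shorter-length determinacies. Forcing the indiscernibles to respect every $\mu_\beta$ at once, while simultaneously tracking the $\omega^{1+\beta}$-many Woodins that each filter is supposed to correspond to in the final HOD analysis, is the delicate combinatorial core of the theorem and the place where I would expect Trang--Woodin's generalized Solovay-model framework to be indispensable.
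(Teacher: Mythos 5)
First, a framing remark: the paper does not prove \cref{Trang-Woodin2} at all --- it is quoted from Neeman \cite{Neemanbook} and Trang--Woodin \cite{TrangThesis} --- so your sketch can only be measured against the cited arguments and against the diagonal analogues the paper actually carries out in Sections 2--4. Your direction $(1)\Rightarrow(2)$ is fine: it is \cref{Neeman} relativized to a real coding $\alpha$. For $(2)\Rightarrow(3)$, the block decomposition $\omega^{1+\alpha}=\omega^{1+\beta}\cdot\omega^{\gamma}$ is a reasonable starting intuition, but the known mechanism (in \cite{TrangThesis}, mirrored in \cref{DetSol_from_LongGame}) is a Friedman-style model-existence game: one reflects to the least level $\gamma^*$ of the Solovay-type model where the statement fails, lets one player enumerate the reals of a continuous chain of countable hulls while the other follows a putative winning strategy, identifies the resulting $g$-derived model with the transitive collapse of the Solovay-type model, and derives a contradiction; the simulation of the Solovay games only enters inside that identification (cf.\ \cref{g-derived_Solovay}). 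The genuine gap is your plan for the sharp: building Silver-style indiscernibles ``coherent across all the filters'' inside one long game, with the limit case handled by assembling the sharps supplied by an induction on $\alpha$, is not a workable argument and misdiagnoses where the difficulty lies. The actual proof makes no use of sharps at lower levels and never constructs indiscernibles inside the long game: one plays a game in which Player II must produce complete theories of $\omega$-models of the Solovay-type model with prescribed wellfounded part, rules out a strategy for Player I by $\mathbf{\Sigma}^1_1$-boundedness, and then uses Player II's strategy, Jensen's model existence theorem, and a back-and-forth genericity argument to verify Harrington's Principle above $\Theta$, which yields the sharp via a countably complete, $\mathbb{R}$-complete ultrafilter --- exactly the shape of \cref{SharpSol_from_LongGame} and \cref{HP_implies_sharp}. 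Your sketch offers no mechanism by which a winning strategy in a length-$\omega^{1+\alpha}$ game would produce the required indiscernibles.

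The direction $(3)\Rightarrow(1)$ is also off target as described. Clause (1) asks for an ordinary active $x$-premouse, so encoding the filters $\mu_\beta$ as predicates of a hybrid premouse does not produce the required object, and there is no off-the-shelf HOD analysis of $L(\mathbb{R},\mu_{<-1+\alpha})$ to quote at this level of generality. The known argument runs through the core-model-induction package: $\mathsf{AD}^+$ and Mouse Capturing in the model, $\Gamma$-suitable premice of the appropriate type, $A$-iterability obtained from a self-justifying system, the Prikry-like forcings attached to the filters, a limit-branch construction yielding a normally $\omega_1$-iterable suitable premouse, and finally the sharp of the model together with the Fuchs--Neeman--Schindler criterion to obtain the active $\omega_1$-iterable premouse; this is precisely the route the paper follows for the diagonal case in \cref{capturing_OD_sets}, \cref{limit_branch_construction}, and \cref{mouse_from_determinacy}. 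In particular the sharp's role is to make the final structure active, not to ``lift to $\omega_1$-iterability,'' which instead comes from the limit-branch construction; and your Woodin bookkeeping (``each $\mu_\beta$ witnessed by a block of $\omega^{1+\beta}$ Woodin cardinals'') is unjustified --- in the derived-model correspondence $\mu_\beta$ arises as a tail filter on blocks of length $\omega^\beta$ of symmetric-collapse stages arranged cofinally inside a sequence of Woodin cardinals of total order type $\omega^\alpha$ (cf.\ \cref{derived_Solovay_model}).
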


    The ``next'' generalized Solovay model beyond $L(\mathbb{R}, \mu_{<\alpha})$, where $\alpha<\omega_1$, is $L(\mathbb{R}, \mu_{<\omega_1})$. In fact, Trang--Woodin obtained the following equiconsistency result even for this model.

    \begin{thm}[Trang--Woodin \cite{TrangThesis}]\label{TrangWoodinEquiconsistency}
        The following theories are equiconsistent:
        \begin{enumerate}
            \item $\mathsf{ZFC}\,+\,$There is a limit of Woodin cardinals $\lambda$ such that the order type of Woodin cardinals below $\lambda$ is $\lambda$.
            \item $\mathsf{ZFC}\,+\,L(\mathbb{R}, \mu_{<\omega_1})\models\text{``}\mathsf{AD} + \forall\alpha<\omega_1 (\mu_{\alpha}\text{ is an ultrafilter on }[\wp_{\omega_1}(\mathbb{R})]^{\omega^{\alpha}})$.''
        \end{enumerate}
    \end{thm}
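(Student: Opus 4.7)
The plan is to prove the equiconsistency in the standard two-step fashion, mirroring the methods behind \cref{Trang-Woodin,Trang-Woodin2}: a symmetric-collapse construction for (1) $\Rightarrow$ Con(2), and an inner-model / HOD analysis for (2) $\Rightarrow$ Con(1).

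For (1) $\Rightarrow$ Con(2), I would start in a model $V$ containing $\lambda$ with the stated order-type property, force with $\mathrm{Coll}(\omega, {<}\lambda)$ to obtain a generic $G$, and let $\mathbb{R}^* = \bigcup_{\delta < \lambda}\mathbb{R}^{V[G\cap\mathrm{Coll}(\omega,{<}\delta)]}$ be the symmetric reals. Inside $V(\mathbb{R}^*)$, I would then form $L(\mathbb{R}^*,\mu_{<\omega_1})$ by computing the filters $\mathcal{F}_\alpha$ over $\mathbb{R}^*$. Derived-model-style arguments using Woodins cofinal in $\lambda$ should give $\mathsf{AD}^{L(\mathbb{R}^*,\mu_{<\omega_1})}$. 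For the ultrafilter property of each $\mu_\alpha$, the order-type condition on Woodins below $\lambda$ provides, for every countable $\alpha$, a block of $\omega^\alpha$ Woodins together with still more Woodins above, which is exactly the input to the forward direction of \cref{Trang-Woodin2}; running that argument uniformly in $\alpha<\omega_1$ should yield the full statement of clause (2) in the ambient $\mathsf{ZFC}$ model $V[G]$.

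For (2) $\Rightarrow$ Con(1), I would work inside $L(\mathbb{R},\mu_{<\omega_1})$ and run the HOD analysis developed for generalized Solovay models, stage by stage in $\alpha<\omega_1$. At each $\alpha$ the restricted model $L(\mathbb{R},\mu_{<\alpha})$ inherits the relevant part of the hypothesis of \cref{Trang-Woodin2}(3), so the inner-model direction of that theorem (used without its sharp hypothesis, yielding only consistency) extracts a canonical premouse $M_\alpha$ with $\omega^\alpha$ Woodins. The key point is that the $M_\alpha$ are internally coherent: they are produced by the same HOD-pair / mouse-operator machinery driven by $\mu_{<\alpha}$, so for $\alpha<\beta$, $M_\alpha$ should appear as a natural initial segment or direct-limit image of $M_\beta$. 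Taking the limit of this chain as $\alpha\to\omega_1$, carried out in $\mathrm{HOD}^{L(\mathbb{R},\mu_{<\omega_1})}$, would produce a $\mathsf{ZFC}$ inner model in which the supremum $\lambda$ of the $\omega_1$-many Woodins so arranged is itself a limit of Woodins whose order type of Woodins below it is exactly $\lambda$.

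The main obstacle is the limit step in this reverse direction. At each countable $\alpha$ the Trang--Woodin machinery is available off the shelf, but assembling the $M_\alpha$ into a single model as $\alpha\to\omega_1$ requires both a precise coherence/embedding statement between the $M_\alpha$ and an iterability certificate at the resulting uncountable stage, rather than only at each countable one. It is here that the \emph{ultrafilter} strength of every $\mu_\alpha$---and not merely the existence of the filters---becomes essential: it is what lets one pick canonical branches and propagate condensation and iterability through the $\omega_1$-limit, and it is also the ingredient that should rule out unintended extra Woodins accumulating above the intended $\lambda$, so that the exact order-type clause of (1) is achieved rather than overshot.
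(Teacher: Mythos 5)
Your forward direction is essentially the route the paper takes: collapse with $\mathrm{Col}(\omega,{<}\lambda)$ and show that the derived $\Delta$-Solovay model at $\lambda$ satisfies $\mathsf{AD}$ together with the ultrafilter property of every $\mu_\alpha$; this is exactly \cref{derived_model_theorem} (modulo the routine point that, since $\lambda$ is inaccessible, $\mathbb{R}^*_G=\mathbb{R}^{V[G]}$ and the tail filters $\mathcal{F}^G_\alpha$ agree on the model with the game-defined Solovay filters, so clause (2) really holds of $L(\mathbb{R},\mu_{<\omega_1})$ as computed in $V[G]$).

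Your reverse direction, however, has a genuine gap, and it sits exactly where you place it. First, the level-by-level appeal to \cref{Trang-Woodin2} is not available in the form you need: without the sharp hypothesis that theorem's mouse direction yields only $\mathrm{Con}$ of the existence of the relevant premice, so there are no canonical objects $M_\alpha$ inside (or definably over) $L(\mathbb{R},\mu_{<\omega_1})$ to be organized into a coherent chain, let alone a proof that $M_\alpha$ embeds as an initial segment or direct-limit image of $M_\beta$. Second, even granting such a chain, the $\omega_1$-limit step --- canonical branches, condensation, and an iterability certificate at the uncountable stage --- is the entire difficulty, and ``the ultrafilters should let one propagate iterability through the limit'' is an unargued hope, not a mechanism. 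The known proof (Trang--Woodin, as presented in \cref{subsection:Prikry-like_forcing}) avoids any $\omega_1$-length gluing: one first establishes the structure theory ($\mathsf{AD}^+$ and Mouse Capturing, \cref{mouse_capturing}) in $L(\mathbb{R},\mu_{<\omega_1})$, and then forces over it with the Prikry-like forcing $\mathbb{P}_\Delta$ built from the degree ultrafilters $\nu_\alpha$. A generic adds an $\omega$-sequence of $\Sigma^2_1$-degree sequences $d_i\in\mathbb{D}_{\alpha_i}$ with $\sup_i\alpha_i=\omega_1^V$, so the diagonalization to $\omega_1$ is achieved in $\omega$ steps along the generic; the associated directed system of suitable premice yields, by \cref{reversed_dervied_model_theorem}, a proper class $\mathsf{ZFC}$ model $N$ whose Woodin cardinals have order type equal to their supremum $\lambda$ and whose derived $\Delta$-Solovay model is $L(\mathbb{R},\mu_{<\omega_1})$ itself --- which simultaneously certifies the Woodins and rules out the ``overshoot'' you worry about, with the Prikry and Mathias properties of $\mathbb{P}_\Delta$ (not branch selection at an $\omega_1$-st stage) doing the work your limit step would require. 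Note also that even in the stronger setting of \cref{mouse_from_determinacy}, where sharps are available and an actual iterable mouse is produced, the relevant premice are of type $\Delta$ with only $\omega$-many blocks $\eta^{\mathcal{P}}_k$; nowhere is an $\omega_1$-length direct limit of mice taken.
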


    Our new type of long games, called \emph{diagonal games}, is corresponding to the large cardinal assumption of this equiconsistency result. To define games of variable countable length, we say that $L\subset\omega^{<\omega_1}$ is a \emph{length condition} if for any $\vec{x}\in\omega^{<\omega_1}$, there is $\alpha>0$ such that $\vec{x}\upharpoonright\alpha\in L$. For any length condition $L$ and any $A\subset L$, $G_L(A)$ is defined as the game on $\mathbb{N}$ in which players I and II take turns choosing natural numbers $n_i$ until they construct a sequence $\langle n_i\mid i<\alpha\rangle$ in $L$, and player I wins if and only if $\langle n_i\mid i<\alpha\rangle\in A$. A diagonal game $G_\Delta(A)$ is defined as $G_L(A)$ with length condition
    \[
    L = \{\langle n_i\mid i<\omega\cdot\alpha\rangle\mid\omega\cdot\alpha = \sup\{\omega\cdot(1+\|x_\beta\|_{\mathrm{WO}})\mid \beta<\alpha\}\},
    \]
    where $x_\beta = \langle n_i\mid \omega\cdot\beta\leq i < \omega\cdot(\beta+1)\rangle$ for each $\beta < \alpha$ and $\|x_\beta\|_{\mathrm{WO}}$ is the length of the well-order coded by $x_\beta$ or 0 if $x_\beta$ does not code a well-order. The diagonal game can be viewed as a ``diagonalization'' of games of fixed countable length. Our main theorem is the following.

    \begin{thm}\label{mainthm}
        The following are equivalent over $\mathsf{ZFC}$:
        \begin{enumerate}
            \item There is an $\omega_1$-iterable active premouse with a limit of Woodin cardinals $\lambda$ such that the order type of Woodin cardinals below $\lambda$ is $\lambda$.
            \item The games $G_{\Delta}(A)$ are determined for any $A\subset\mathbb{R}^{<\omega_1}$ that is ${<}\omega^2\mathchar`-\mathbf{\Pi}^1_1$ in the codes.
            \item $L(\mathbb{R}, \mu_{<\omega_1})\models\text{``}\mathsf{AD} + \forall\alpha<\omega_1 (\mu_{\alpha}\text{ is an ultrafilter on }[\wp_{\omega_1}(\mathbb{R})]^{\omega^{\alpha}})$,'' and a sharp for $L(\mathbb{R}, \mu_{<\omega_1})$ exists.
        \end{enumerate}
    \end{thm}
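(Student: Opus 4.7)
The plan is to establish the equivalences via the cycle (1) $\Rightarrow$ (2) $\Rightarrow$ (3) $\Rightarrow$ (1), with each step adapting the methods behind the analogous prior theorems (\cref{HMNW,Trang-Woodin,Trang-Woodin2,TrangWoodinEquiconsistency}) to the new diagonal setting.

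For (1) $\Rightarrow$ (2), I would extend Neeman's proof of \cref{Neeman} from fixed to variable countable length. At round $\beta$, once the block $x_\beta$ has been played and $\gamma_\beta = \|x_\beta\|_{\mathrm{WO}}$ has been declared, one knows precisely how many further $\omega$-blocks must be played before round $\beta$ can contribute to the closing off the run. The large cardinal hypothesis is exactly calibrated for this: since the Woodin cardinals below $\lambda$ have order type $\lambda$, for any countable ordinal $\gamma_\beta$ revealed during the play there remain cofinally many Woodin cardinals above the ones already consumed, so one can inductively feed $\omega \cdot \gamma_\beta$ further Woodins into an auxiliary game and iterate past them. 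Assembling these local strategies by taking direct limits of iteration trees of length $\omega \cdot \alpha$ as in \cite[Chapter~2]{Neemanbook}, and using that the payoff is $<\omega^2$-$\mathbf{\Pi}^1_1$ in the codes to handle the comeager/meager dichotomy at the end, produces the desired winning strategy.

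For (2) $\Rightarrow$ (3), I would mimic the derivation of the Solovay-type clause in \cref{Trang-Woodin2}. Each run of a diagonal game can be used to simulate, by fixing a code of $\alpha$ as the first block $x_0$, a game of length $\omega^{1+\alpha}$; in this way the single determinacy hypothesis in (2) uniformly implies the determinacy hypothesis of \cref{Trang-Woodin2} at every countable $\alpha$ simultaneously, so $L(\mathbb{R},\mu_{<\alpha})$ has the required ultrafilter properties for every $\alpha < \omega_1$. To upgrade this level-by-level statement to the uniform one about $L(\mathbb{R},\mu_{<\omega_1})$, one uses the fact that a diagonal run whose first block codes a sufficiently large $\alpha$ and whose remainder plays an auxiliary game in $L(\mathbb{R},\mu_{<\alpha})$ reflects truth in $L(\mathbb{R},\mu_{<\omega_1})$; a winning strategy in the diagonal game with a suitable universal payoff then codes the sharp of $L(\mathbb{R},\mu_{<\omega_1})$, by the same ``strategy-as-sharp'' trick used by Woodin in the footnoted argument of \cref{DetL(R)} and the \cref{mouse_from_DetSol} argument.

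For (3) $\Rightarrow$ (1), I would carry out the HOD analysis of $L(\mathbb{R},\mu_{<\omega_1})$, following Trang--Woodin's proof of \cref{TrangWoodinEquiconsistency}. Each $\mu_\alpha$, being a countably complete ultrafilter in the model, produces via Woodin's genericity iteration and the derived model theorem a Woodin cardinal $\delta_\alpha$ of $\mathrm{HOD}^{L(\mathbb{R},\mu_{<\omega_1})}$, strictly increasing in $\alpha$; setting $\lambda = \sup_{\alpha<\omega_1}\delta_\alpha$ gives a limit of Woodins whose order type below $\lambda$ is $\omega_1^V$, which equals $\lambda$ after the standard reidentification inside $\mathrm{HOD}$. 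The existence of a sharp for $L(\mathbb{R},\mu_{<\omega_1})$ then upgrades the resulting inner model of $\mathsf{ZFC}$ with the desired large cardinal configuration to an actual $\omega_1$-iterable active premouse, yielding (1).

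The main obstacle is the step (2) $\Rightarrow$ (3): although each level $L(\mathbb{R},\mu_{<\alpha})$ for $\alpha<\omega_1$ is handled by \cref{Trang-Woodin2} once one has the appropriate fixed-length determinacy, the genuinely new work lies in showing that determinacy of the \emph{single} class of diagonal games suffices to produce the sharp of the \emph{limit} model $L(\mathbb{R},\mu_{<\omega_1})$, rather than merely a sharp at each bounded stage. Pinning down the correct reflection/absoluteness argument that a winning diagonal strategy witnesses this uniform sharp is where the argument must go beyond the level-by-level picture of \cref{Trang-Woodin,Trang-Woodin2}.
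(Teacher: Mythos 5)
Your cycle (1)$\Rightarrow$(2)$\Rightarrow$(3)$\Rightarrow$(1) matches the paper's organization, and your (1)$\Rightarrow$(2) is in the spirit of the paper's \cref{LGD_from_LC} (the paper's key observation, \cref{trivial_lemma}, is that a diagonal run splits into at most $\omega$ blocks of fixed length, each block determining the length of the next, so Neeman's fixed-length machinery applies; there is no comeager/meager dichotomy anywhere in that argument). But your (2)$\Rightarrow$(3) has a genuine gap at exactly the point you flag, and the flag is not a fix. Deriving the hypotheses of \cref{Trang-Woodin2} level-by-level by freezing the first block does not get you anywhere near (3): $L(\mathbb{R},\mu_{<\omega_1})$ is not the union or direct limit of the models $L(\mathbb{R},\mu_{<\alpha})$, the statement ``$\mu_\beta$ is an ultrafilter'' in the limit model quantifies over sets $Z\in L(\mathbb{R},\mu_{<\omega_1})$ that need not belong to any $L(\mathbb{R},\mu_{<\alpha})$, and the sharp of the limit model is not implied by the sharps of the bounded models. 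The ``reflection of truth from $L(\mathbb{R},\mu_{<\omega_1})$ to $L(\mathbb{R},\mu_{<\alpha})$ for large $\alpha$'' that your upgrade step invokes is precisely what is unavailable at this stage; the only reflection of this kind in the paper (\cref{Sigma_1_reflection}) is a $\Sigma_1$-reflection \emph{inside} $L(\mathbb{R},\mu_{<\omega_1})$ and is itself a consequence of (3), so using it here would be circular.

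What the paper actually does for (2)$\Rightarrow$(3) bypasses \cref{Trang-Woodin2} entirely and works directly with the limit model: for $\mathsf{AD}$ and ultrafilterness it runs a least-counterexample argument on the levels $L_{\gamma}(\mathbb{R},\mu_{<\omega_1})$ using an auxiliary model-existence game $G_{\phi,\psi}$ of diagonal type (so its determinacy follows from (2)), a continuous $\omega_1$-chain of countable elementary submodels, and the identification of the collapsed counterexample level with an initial segment of a $g$-derived $\Delta$-Solovay model (\cref{g-derived_Solovay}, \cref{DetSol_from_LongGame}); for the sharp it plays a second game $G^\sharp$ whose Player II produces theories of $\omega$-models, and combines $\mathbf{\Sigma}^1_1$-boundedness, Jensen's model existence theorem, and Harrington's principle (\cref{SharpSol_from_LongGame}). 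Your appeal to the ``strategy-as-sharp'' idea points in the right direction for the sharp, but without the $g$-derived-model and $\omega$-model machinery it remains a placeholder rather than an argument. Your (3)$\Rightarrow$(1) sketch is also much coarser than what is needed: the paper does not read off one Woodin per $\mu_\alpha$ from a HOD analysis, but constructs $\Gamma$-suitable premice of type $\Delta$, proves $A$-iterability for $\mathrm{OD}$ sets via $\Sigma_1$-reflection and the Prikry-like forcing $\mathbb{P}_\Delta$, uses the sharp to obtain a self-justifying system, and gets $\omega_1$-iterability by the limit-branch construction (\cref{mouse_from_determinacy}); the cardinality bookkeeping in your ``standard reidentification inside $\mathrm{HOD}$'' is exactly the diagonal condition $\alpha<\delta_\alpha$ built into suitability and cannot be waved through.
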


    The definition of the diagonal game provides a canonical way to define a new long game from a given $\omega_1$-sequence of long games. This diagonal operator on long games appears to play an important role in advancing through the hierarchy of games of variable countable lengths, potentially leading to further level-by-level equivalence theorems.

    Finally, we remark that the converse of aforementioned Neeman's determinacy theorems in \cite{Neemanbook,adm,Games_of_length_omega_1} are still open except for games of fixed countable length. We expect that any equivalence proof between mice and long game determinacy would always involve natural models of $\mathsf{AD}$ serving as ``generalized derived models'' of given mice. In our case, $L(\mathbb{R}, \mu_{<\omega_1})$ is such a model. It is, however, still unclear to us what types of $\mathsf{AD}$ models would play the same roles for the games studied by Neeman.

    \subsection*{Acknowledgments}
    This research was funded in whole by the Austrian Science Fund (FWF) [10.55776/Y1498, 10.55776/I6087, 10.55776/ESP5842424]. For the purpose of open access, the authors have applied a CC BY public copyright license to any Author Accepted Manuscript version arising from this submission. The first author thanks Andreas Lietz for helpful discussions on the topic of this paper.

    \section{Long game determinacy from a mouse}

    We need to adopt several conventions and notations. First, $\mathbb{R}$ always denotes the Baire space $\omega^\omega$. We confuse $\omega^{\omega\cdot\alpha}$ with $\mathbb{R}^{\alpha}$ and thus we confuse a sequence of natural numbers $\langle n_i\mid i<\omega\cdot\alpha\rangle$ with a sequence of reals $\langle x_\beta\mid \beta<\alpha\rangle$, where $x_\beta = \langle n_i\mid \omega\cdot\beta\leq i < \omega\cdot(\beta+1)\rangle$ for each $\beta < \alpha$. Following this convention, we often regard a sequence of reals $\langle x_\beta\mid\beta<\alpha\rangle$ as a run of $G_\Delta$, even though $G_\Delta$ is defined as a game on $\mathbb{N}$. When writing $G_\Delta$, we omit the payoff set of the game because it is not relevant for the general argument to follow.

    For any sequence $\langle x_\beta\mid\beta<\alpha\rangle\in\mathbb{R}^{<\omega_1}$, where $\alpha>0$, we define the ordinal $\eta_k^{\vec{x}}$ for $k<\omega$ as follows: Let $\eta_0^{\vec{x}} = 0$ and $\eta_1^{\vec{x}} = 1$. For each $1<k<\omega$, let
    \[
    \eta_{k+1}^{\vec{x}}=\sup\{1+\|x_{\beta}\|_{\mathrm{WO}}\mid \beta<\max\{\eta_k^{\vec{x}}, \alpha\}\}.
    \]
    Also, for $k<\omega$, we define $\theta^{\vec{x}}_k$ to be the unique ordinal such that $\eta_{k+1}^{\vec{x}}=\eta^{\vec{x}}_k + \theta^{\vec{x}}_k$. Note that $\eta_{k+1}^{\vec{x}}$ and $\theta_k^{\vec{x}}$ only depend on $\vec{x}\upharpoonright\eta_k^{\vec{x}}$. Then the following lemma holds by definition.

    \begin{lem}\label{trivial_lemma}
        Let $\vec{x} = \langle x_\beta\mid\beta<\alpha\rangle\in\mathbb{R}^{<\omega_1}$ be a complete run of $G_{\Delta}$.\footnote{We say a run of some game is complete if the run reaches a stopping point of the game.} Then either
        \begin{enumerate}
            \item for some $k < \omega$, $\eta_{k}^{\vec{x}} = \eta_{k+1}^{\vec{x}} = \alpha$, or
            \item for all $k<\omega$, $\eta_k^{\vec{x}}<\eta_{k+1}^{\vec{x}}$ and $\alpha=\sup\{\eta_k\mid k<\omega\}$.
        \end{enumerate}
    \end{lem}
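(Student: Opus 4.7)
The key observation underlying the lemma is that when $\vec{x}$ is a complete run of $G_\Delta$, the ordinal $\alpha$ is the \emph{least} positive ordinal $\gamma$ satisfying the closure condition $\gamma = \sup\{1+\|x_\beta\|_{\mathrm{WO}} \mid \beta<\gamma\}$, because the stopping rule of $G_\Delta$ terminates play the instant the constructed sequence first lands in the length condition $L$. Call any such $\gamma$ a \emph{closure point} of $\vec{x}$; thus $\alpha$ is the minimum positive closure point.

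The plan is first to show by induction on $k$ that $\eta_k^{\vec{x}} \leq \eta_{k+1}^{\vec{x}} \leq \alpha$. The base cases $\eta_0 = 0 \leq 1 = \eta_1 \leq \alpha$ are immediate. For the inductive step, monotonicity of the supremum in its upper bound, applied to the hypothesis $\eta_{k-1} \leq \eta_k$, yields $\eta_k \leq \eta_{k+1}$; and the closure property of $\alpha$ together with $\eta_k \leq \alpha$ forces $\eta_{k+1} \leq \alpha$. Having established monotonicity and boundedness, I would then dichotomise according to whether the non-decreasing sequence $\langle \eta_k^{\vec{x}}\rangle_{k<\omega}$ stabilises.

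If some $k$ satisfies $\eta_k^{\vec{x}} = \eta_{k+1}^{\vec{x}}$, then $\eta_k^{\vec{x}}$ is itself a positive closure point of $\vec{x}$ (positive since $\eta_k^{\vec{x}} \geq \eta_1 = 1$), and minimality of $\alpha$ forces $\eta_k^{\vec{x}} = \alpha$, yielding clause (1). Otherwise the sequence is strictly increasing; set $\eta^* := \sup_k \eta_k^{\vec{x}} \leq \alpha$. Continuity of the supremum in its upper bound gives
\[
\sup\{1+\|x_\beta\|_{\mathrm{WO}} \mid \beta<\eta^*\} \;=\; \sup_k \sup\{1+\|x_\beta\|_{\mathrm{WO}} \mid \beta<\eta_k^{\vec{x}}\} \;=\; \sup_k \eta_{k+1}^{\vec{x}} \;=\; \eta^*,
\]
so $\eta^*$ is a closure point and minimality of $\alpha$ forces $\eta^* = \alpha$, yielding clause (2). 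Consistent with the lemma's name, no step here poses a genuine obstacle: the only point requiring any care is extracting the minimality characterisation of $\alpha$ from the definition of a complete run of $G_\Delta$, after which the remainder is a routine unwinding of the recursive definition of the $\eta_k^{\vec{x}}$.
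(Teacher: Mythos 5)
Your proof is correct and is essentially the argument the paper leaves implicit (the lemma is stated as holding ``by definition''): the entire content is your observation that, since play stops the first time the constructed sequence falls into the length condition $L$, $\alpha$ is the least positive $\gamma$ with $\gamma=\sup\{1+\|x_\beta\|_{\mathrm{WO}}\mid\beta<\gamma\}$, after which the monotonicity/stabilization dichotomy for $\langle\eta_k^{\vec{x}}\mid k<\omega\rangle$ yields clauses (1) and (2) exactly as you describe. One microscopic point: the step $\eta_1^{\vec{x}}\le\eta_2^{\vec{x}}$ is not literally an instance of sup-monotonicity, since $\eta_1^{\vec{x}}$ is defined outright to be $1$ rather than as a supremum of the recursive form, but it follows at once from $\eta_2^{\vec{x}}=1+\|x_0\|_{\mathrm{WO}}\ge 1$ (and note that the $\max$ in the paper's displayed recursion for $\eta_{k+1}^{\vec{x}}$ is evidently a typo for $\min$, which is how you have, correctly, read it).
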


    This lemma says that every complete run $\vec{x}$ of the game $G_\Delta$ can be canonically divided into at most $\omega$ many blocks $\vec{x}\upharpoonright[\eta_k^{\vec{x}}, \eta_{k+1}^{\vec{x}})$. Thus, we can think of $G_\Delta$ as the concatenation of at most $\omega$ many sub-games of fixed countable length. This is why $G_\Delta$ can be regarded as diagonalization of $\langle G_{\omega\cdot(1+\alpha)}\mid\alpha<\omega_1\rangle$. The crucial point is that even though the length of a run of the game $G_\Delta$ will only be determined during the run of the game, each sub-game determines the length of the next sub-game. This enables us to use Neeman's argument to prove the determinacy of $G_\Delta$ from mice.

    Moreover, \cref{trivial_lemma} allows us to code a run of $G_{\Delta}$ into a single real; each run $\vec{x}=\langle x_\beta\mid\beta<\alpha\rangle$ can be divided into at most $\omega$ many blocks $\vec{x}\upharpoonright[\eta_0^{\vec{x}}, \eta_{k+1}^{\vec{x}})$ as above. Each block can be coded into a real via a bijection between $\omega$ and $\theta_k^{\vec{x}}$, so if we fix some recursive injection of $\mathbb{R}^\omega$ into $\mathbb{R}$ beforehand, then we can code $\vec{x}$ into a single real. We denote the code of $\vec{x}$ by $\lceil\vec{x}\rceil$. For a given pointclass $\Gamma$, we say that $A\subset\mathbb{N}^{<\omega_1}\simeq\mathbb{R}^{<\omega_1}$ is in $\Gamma$ in the codes if there is $A^*\subset\mathbb{R}$ in $\Gamma$ such that $\vec{x}\in A \iff \lceil\vec{x}\rceil\in A^*$. The precise description of the coding system is not relevant for our arguments, but we require some kind of Lipschitz continuity in the sense that $\lceil\vec{x}\rceil\upharpoonright k$ only depends on $\vec{x}\upharpoonright\eta_k^{\vec{x}}$. Our goal in this section is to prove the following theorem:

    \begin{thm}\label{LGD_from_LC}
        Suppose that there exists a class model $M$ of $\mathsf{ZFC}$ and a cardinal $\lambda$ in $M$ such that
        \begin{itemize}
            \item $M = L(V_\lambda^M)$,
            \item $M$ is weakly iterable in the sense of \cite[Appendix A]{Neemanbook},
            \item $\lambda$ is the order type of Woodin cardinals below $\lambda$ in $M$, and
            \item $V_\lambda^M$ is countable in $V$.
        \end{itemize}
        Then the game $G_{\Delta}(C)$ is determined for every $C\subset\mathbb{R}^{<\omega_1}$ that is ${<}\omega^2\mathchar`-\mathbf{\Pi}^1_1$ in the codes.
    \end{thm}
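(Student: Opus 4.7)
The plan is to adapt Neeman's auxiliary-games method, as employed in the proofs of \cref{HMNW}, \cref{Neeman}, and \cref{Trang-Woodin}, but to execute it block by block in accordance with the decomposition provided by \cref{trivial_lemma}. For a fixed countable $\theta$, Neeman's construction on a mouse with $\theta$ Woodin cardinals yields a winning strategy in games of length $\omega\cdot\theta$ with ${<}\omega^2$-$\mathbf{\Pi}^1_1$-in-the-codes payoff. Since any complete run of $G_\Delta$ decomposes into at most $\omega$ many blocks $\vec x\uphar[\eta_k^{\vec x},\eta_{k+1}^{\vec x})$ of length $\omega\cdot\theta_k^{\vec x}$, whose lengths are revealed only as play proceeds, the goal is to splice together $\omega$ many instances of Neeman's construction, each consuming a fresh initial segment of the Woodin cardinals of $M$ of order type $\theta_{k+1}^{\vec x}$ sitting above those used for earlier blocks.

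Concretely, I would first recast Neeman's fixed-length construction in a fully parameterized form: given a countable $\theta$, an internally iterable initial segment $\mathcal N$ of $M$ carrying $\theta$ Woodin cardinals above its base, and a ${<}\omega^2$-$\mathbf{\Pi}^1_1$-in-the-codes set $A$, produce a strategy in $G_{\omega\cdot\theta}(A)$ whose bookkeeping delivers a wellfounded iterate of $\mathcal N$ via a tree of length $\theta$ on the allocated Woodins. With this parametrization in hand, one iterates: at the start of block $k+1$ the partial play $\vec x\uphar\eta_{k+1}^{\vec x}$ already determines $\theta_{k+1}^{\vec x}$, so block $k+1$ is played inside the current wellfounded iterate using Neeman's fixed-length strategy on the next $\theta_{k+1}^{\vec x}$ Woodins. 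The Lipschitz property of the coding $\lceil\vec x\rceil$ ensures that $C$ restricts at each block boundary to a ${<}\omega^2$-$\mathbf{\Pi}^1_1$-in-the-codes condition suitable as input to the next block, and the total Woodin consumption $\sum_k\theta_k^{\vec x}=\alpha$ remains countable, and hence below $\lambda$, since the hypothesis that the order type of Woodin cardinals below $\lambda$ in $M$ equals $\lambda$ guarantees that no countable sequence of block allocations can exhaust $M$'s Woodin stock.

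The main obstacle is the $\omega$-limit of blocks, where the concatenated iteration tree on $M$ must be given a cofinal branch in a way coherent with the branch choices made inside each block. Weak iterability of $M$ in the sense of \cite[Appendix A]{Neemanbook} supplies the existence of such a branch, whose direct limit is a wellfounded iterate to which the full payoff lifts; what requires care is arranging that the branches chosen across successive blocks, together with the one at the $\omega$-limit, come from a single coherent iteration strategy, and that the extenders produced in the various fixed-length sub-games interleave correctly on one master iteration tree. I expect this to be handled by an integration argument in the spirit of Neeman's own patching of auxiliary games, adapted to the dynamic allocation of Woodins dictated by the blocks. The ${<}\omega^2$-$\mathbf{\Pi}^1_1$ complexity of $C$ should pose no additional difficulty beyond what is already absorbed in the fixed-length case.
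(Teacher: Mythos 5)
Your block-by-block skeleton does match the shape of the paper's argument: the proof there also runs Neeman's fixed-length machinery one block at a time, uses the hypothesis that the Woodins of $M$ below $\lambda$ have order type $\lambda$ to find, inside the current iterate, $\theta_k^{\vec x}$ fresh Woodins above those already consumed (this works because the opponent's reals are made generic over the iterate, so $\eta_k^{\vec x}$ is countable there and hence below the image of $\lambda$), and uses weak iterability only at the end to get a wellfounded direct limit of the stacked trees. But the step you lean on --- ``the Lipschitz property of the coding ensures that $C$ restricts at each block boundary to a ${<}\omega^2$-$\mathbf{\Pi}^1_1$-in-the-codes condition suitable as input to the next block'' --- is not available, and it is precisely where the real work lies. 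Membership in $C$ is determined only by the complete run (whose length is itself decided dynamically), so there is no per-block payoff to restrict to; the natural candidate, ``player I can continue the run so as to win $G_\Delta(C)$,'' is far more complicated than $\mathbf{\Pi}^1_1$ in the codes. Consequently you cannot feed \cref{Neeman} (or its proof) to each block as a black box, and the claim that the payoff complexity ``poses no additional difficulty beyond what is already absorbed in the fixed-length case'' begs the question.

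What the paper does instead is exactly the missing mechanism. Per block it invokes not fixed-length determinacy but the genericity-iteration-game theorem (\cref{main_thm_on_games_of_fixed_length}) applied to a \emph{name} $\dot A_k(\vec x,P^*,\gamma^*)$ for the set of block continuations after which player I still wins the next auxiliary finite game $G_{k+1}$ (\cref{dfn:G_k,dfn:A_k}); since the recursion on $k$ is ill-founded, these games carry an ordinal parameter $\gamma$ and are defined by simultaneous induction on $\gamma$, with the least pair of local indiscernibles $\gamma_L,\gamma_H$ used to keep the construction going at every stage. The ${<}\omega^2$-$\mathbf{\Pi}^1_1$ payoff itself is transferred across blocks by Martin's open game $\mathcal C[\vec y]$ on ordinals, played on Silver indiscernibles of $M$ (this is where countability of $V_\lambda^M$, i.e.\ the existence of $M^\sharp$, enters), and the good positions $P_k$ together with \cref{Martin_game_facts}(2) certify at the very end that the completed run lies in $C$. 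By contrast, the issue you single out as the main obstacle --- coherence of branch choices at the $\omega$-limit of blocks --- is the easy part: the paper simply stacks the trees and invokes weak iterability for the direct limit. Without the Martin-game and auxiliary-ordinal-game apparatus (or some substitute for it), the splicing argument you describe does not yield a proof.
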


    The proof is a modification of Neeman's proof of determinacy of the games of countable fixed length in \cite[Section 2D]{Neemanbook}. We only describe the outline of his argument and how to adapt it in our context.

    For the proof, we need to introduce several auxiliary games:
    \begin{enumerate}
        \item A genericity iteration game $\hat{G}_{\mathrm{fixed}}(M, \delta, \theta, \dot{A})$ and its mirror.
        \item Martin's open game $\mathcal{C}[\vec{y}]$ on ordinals for a given ${<}\omega^2\mathchar`-\mathbf{\Pi}^1_1$ set $C\subset\mathbb{R}^{<\omega_1}$ in the codes.
        \item Finite games $G_k(\vec{x}, P, \gamma)$ and their mirrors.
    \end{enumerate}

    The first game is literally the same as one in Neeman's book. The genericity iteration game $\hat{G}_{\mathrm{fixed}}(M, \delta, \theta, \dot{A})$ is defined for a transitive model $M$ of (a sufficiently large fragment of) $\mathsf{ZFC}$ with a Woodin cardinal $\delta$, a countable (in $V$) ordinal $\theta$, and a name $\dot{A}$ for a subset of $\mathbb{R}^{\theta}$ in $M^{\mathrm{Col}(\omega, \delta)}$. This game consists of $-1+\theta+1$ rounds, indexed 1 through $\theta$, each of which as length $\omega$. In each round $\xi$, the two players collaborate to produce a real $y_{\xi}$, while Player I plays a nice iteration tree $\mathcal{T}_{\xi}$ of length $\omega$ and Player II plays its cofinal branch $b_{\xi}$. Player I immediately wins if the final model along $b_\xi$ is ill-founded. Otherwise, Player I continues in the next round with a tree on the final model along $b_\xi$. Then the players obtain a stack $\langle\mathcal{T}_{\xi}, b_{\xi}\mid 1\leq\xi\leq\theta\rangle$ of nice iteration trees and branches, and its direct limit model $M_{\theta+1}$. Again, Player I wins if $M_{\theta+1}$ is ill-founded. In the case where $M_{\theta+1}$ is well-founded, Player I wins if and only if, letting $j\colon M\to M_{\theta+1}$ be the iteration embedding,
    \[\langle y_{\xi}\mid\xi<\theta\rangle\in j(\dot{A})[h]
    \]
    for some $M_{\theta+1}$-generic $h\subset\mathrm{Col}(\omega, j(\delta))$. The mirror version $\hat{H}_{\mathrm{fixed}}(M, \delta, \theta, \dot{A})$ is defined by switching the roles of player I and II. Please see \cite[Section 2A]{Neemanbook} for the exact definition. We use the following theorem from there.

    \begin{thm}\label{main_thm_on_games_of_fixed_length}
        Suppose that $\theta\geq 1$ is countable in $M$ and $V$ and that there are $-1+\theta$ many Woodin cardinals of $M$ below $\delta$ and $\delta$ itself is also a Woodin cardinal of $M$. Also, suppose that $V_{\delta+1}^M$ is a countable in $V$ and $g\subset\mathrm{Col}(\omega, \delta)$ is $M$-generic with $g\in V$. Let $\dot{A}$ and $\dot{B}$ be names for a subset of $\mathbb{R}^\theta$ in $M^{\mathrm{Col}(\omega, \delta)}$. Then at least one of the following holds:
        \begin{enumerate}
            \item Player I has a winning strategy in $\hat{G}_{\mathrm{fixed}}(M, \delta, \theta, \dot{A})$.
            \item Player II has a winning strategy in $\hat{G}_{\mathrm{fixed}}(M, \delta, \theta, \dot{B})$.
            \item In $M[g]$ there exists a sequence of reals $\vec{y}\in\mathbb{R}^\theta$ that is in neither $\dot{A}[g]$ nor $\dot{B}[g]$.
        \end{enumerate}
        Moreover, there are formulas $\phi_{\mathrm{I}}$ and $\phi_{\mathrm{II}}$ such that if $M\models\phi_{\mathrm{I}}[\delta, \theta, \dot{A}]$ then (1) holds and if $M\models\phi_{\mathrm{II}}[\delta, \theta, \dot{B}]$ then (2) holds.
    \end{thm}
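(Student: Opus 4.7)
The plan is to argue by transfinite induction on $\theta$, following Neeman's framework in \cite[Section 2A]{Neemanbook} and leaning on Woodin's extender-algebra theorem at the available Woodin cardinals of $M$. The structural idea is that a play of $\hat{G}_{\mathrm{fixed}}$ in $V$ can be reduced, round by round, to a game internal to $M$ via genericity iterations: once Player I produces an iteration tree $\mathcal{T}_\xi$ and Player II selects a branch $b_\xi$, the real $y_\xi$ in $V$ is absorbed as a generic over the iterate along $b_\xi$ for the image of the extender algebra at the relevant Woodin, so the question of whether $y_\xi$ eventually lands in $j(\dot A)[h]$ becomes an $M$-internal $\mathrm{Col}$-forcing question. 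This reduction is what lets us formulate the candidate formulas $\phi_{\mathrm I}(\delta, \theta, \dot A)$ and $\phi_{\mathrm{II}}(\delta, \theta, \dot B)$ inside $M$: they assert, recursively in $\theta$, the existence of an internal winning strategy for the appropriate side in an associated $M$-side game played on $V_{\delta+1}^M$.

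For $\theta = 1$ the game has a single round and the trichotomy collapses to a density dichotomy for the Col-names $\dot A$ and $\dot B$ inside $M$: either every $\mathrm{Col}(\omega,\delta)$-condition can be extended to force the generic real into $\dot A$, or every condition can be extended to force it out of $\dot B$, or neither holds and the failure produces an $M[g]$-real missing both payoff sets. The first two alternatives lift to $V$-strategies via a single genericity iteration at $\delta$ absorbing the opponent's $V$-move; the third gives clause (3). For the inductive step, assume $\phi_{\mathrm I}$ fails in $M$. Then, playing $M$-internally, no Player I first-round configuration leaves the residual Col-name $\phi_{\mathrm I}$-good for the tail game on the remaining Woodins; applying the induction hypothesis inside $M$ to each such tail yields a pointwise choice between $\phi_{\mathrm{II}}$ for the $\dot B$-tail and an internal separating sequence. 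One then uses Woodinness of $\delta$ in Martin--Steel style, via a branch-absorption argument over $M$, to uniformize these outcomes across first-round moves, producing either a single global Player II strategy witnessing $\phi_{\mathrm{II}}$ for $\dot B$ or a full $\theta$-length separating sequence in $M[g]$.

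The hardest step will be the limit stage of the induction, especially when $\theta$ has cofinality $\omega$: a run then passes through infinitely many rounds whose direct limit $M_{\theta+1}$ must remain well-founded, and the Player II strategy has to thread the inductive tail arguments through the whole stack while at each step committing to a well-founded cofinal branch. Weak iterability of $M$ together with countability of $V_{\delta+1}^M$ in $V$ is precisely what supplies the cofinal well-founded branches needed at each cofinality-$\omega$ limit, but interleaving branch choice with payoff control is delicate bookkeeping; this same bookkeeping is also what ensures that $\phi_{\mathrm I}$ and $\phi_{\mathrm{II}}$ are uniformly first-order over $V_{\delta+1}^M$ in the parameters $(\delta, \theta, \dot A)$ and $(\delta, \theta, \dot B)$, yielding the moreover clause.
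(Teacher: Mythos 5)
A preliminary remark on the benchmark: the paper does not prove \cref{main_thm_on_games_of_fixed_length} at all; it is imported verbatim from Neeman's book \cite[Section 2A]{Neemanbook}, where the formulas $\phi_{\mathrm{I}}, \phi_{\mathrm{II}}$ and the trichotomy come out of the auxiliary-games and pivot machinery of Chapters 1--2. So your proposal has to be measured against Neeman's argument, and as a sketch of that argument it has a genuine gap rather than just missing detail.

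The gap is already visible in your base case $\theta=1$. The dichotomy you propose is a density statement about $\mathrm{Col}(\omega,\delta)$-conditions forcing ``the generic real'' into $\dot A$ or out of $\dot B$, but this is about the wrong object and has the wrong logical shape. The real $y_1$ in $\hat{G}_{\mathrm{fixed}}$ is produced jointly, with the opponent supplying half of its digits in $V$; it is not the collapse generic, and no density fact about $\mathrm{Col}(\omega,\delta)$ gives Player I a strategy to steer a real he only half controls into $j(\dot A)[h]$ (a genericity iteration makes the joint real generic for the extender algebra, but it does not decide on which side of the name it lands). Moreover, the simultaneous failure of your two density conditions gives a condition $p$ forcing the collapse real out of $\dot A$ and a possibly incompatible condition $q$ forcing it into $\dot B$, which does not produce a single $\vec y\in M[g]$ outside both $\dot A[g]$ and $\dot B[g]$; the trichotomy does not close up at the density level. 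What is actually needed, and what your sketch omits entirely, is Neeman's machinery in which $\phi_{\mathrm{I}}$ (resp.\ $\phi_{\mathrm{II}}$) asserts the existence of a winning strategy in an \emph{auxiliary} game inside $M$, augmented by moves witnessing homogeneously Suslin/name representations attached to $\dot A$ at the Woodin cardinals; these auxiliary games are closed (resp.\ open), hence determined in $M$, and it is exactly this internal determinacy, together with the pivot construction and the one-step lemmas, that (i) makes the internal alternatives exhaustive, (ii) converts internal strategies into $V$-strategies for $\hat{G}_{\mathrm{fixed}}$ round by round, and (iii) extracts the separating sequence of clause (3) inside $M[g]$ when both internal conditions fail. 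Your inductive step defers all of this to ``Martin--Steel style branch absorption to uniformize,'' which is precisely the point where the auxiliary-game apparatus is doing the work; as written it is a placeholder, not an argument, and the same applies to the claim that the recursion automatically yields first-order formulas over $V_{\delta+1}^M$. The limit-stage/weak-iterability remarks are fine in spirit but secondary to this missing core.
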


    Now we introduce Martin's game $\mathcal{C}[\vec{y}]$ for every $\vec{y}\in\mathbb{R}^{<\omega_1}$ as in \cite[Section 2D]{Neemanbook}. Recall that $C\subset \mathbb{R}^{<\omega_1}$ is the given payoff set that is ${<}\omega^2\mathchar`-\mathbf{\Pi}^1_1$ in the codes. There is an ordinal $0<\alpha<\omega^2$ and a sequence $\langle C_{\xi}\mid\xi<\alpha\rangle$ of subsets of $\mathbb{R}^{<\omega_1}$ that are $\mathbf{\Pi}^1_1$ in the codes such that
    \[
    \vec{y}\in C \iff \text{the least $\xi$ such that $\xi=\alpha$ or $\vec{y}\in C_{\xi}$ is of the same parity a $\alpha$.}
    \]
    We write $\mathrm{LO}$ and $\mathrm{WO}$ for the set of linear orders on $\omega$ and the set of wellorders on $\omega$, respectively. For each $\xi<\alpha$, there is a Lipschitz continuous map $R_{\xi}\colon \mathbb{R}^{<\omega_1}\to\mathrm{LO}$ such that for all $\vec{y}\in\mathbb{R}^{<\omega_1}$,
    \[
    \vec{y}\in C_{\xi} \iff R_{\xi}[\vec{y}]\in\mathrm{WO}
    \]
    Here, the Lipschitz continuity of $R_{\xi}$ means that $R_{\xi}[\vec{y}]\uphar k+1$ only depends on $\vec{y}\uphar\eta_k^{\vec{y}}$. Without loss of generality, we may assume that $0$ is the maximal element of $R_{\xi}[\vec{y}]$.

    Recall that we assume $M=L(V_\lambda^M)$. Since $V_\lambda^M$ is countable in $V$, the sharp for $M$ exists. Let $m$ be such that $\alpha\in(\omega\cdot m, \omega\cdot (m+1)]$ and let $\langle u_i\mid i<m+1\rangle$ be an increasing enumeration of the first $m+1$ uniform Silver indiscernibles for $M$. Also, we fix an effective bijection $h\colon\omega\to\alpha\times\omega$ such that for any $\xi<\alpha$, letting $r_{\xi}=h^{-1}[\{\xi\}\times\omega]$,
    \begin{enumerate}
        \item $\mathrm{pr}_1\circ h$ is increasing on $r_{\xi}$, where $\mathrm{pr}_1(\langle\xi, i\rangle)=i$ for any $i<\omega$,
        \item if $\xi$ is even (resp.\ odd), then $r_{\xi}$ consists of even (resp.\ odd) natural numbers, and
        \item $\min(r_{\xi})<\min(r_{\xi+1})$.
    \end{enumerate}
    Then the game $\mathcal{C}[\vec{y}]$ is an open game on ordinals of length $\omega$ played as follows:
    \[
    \begin{array}{c|ccccc}
    \mathrm{I}    & \rho_0 &        & \rho_2 &        & \ldots\\ \hline
    \mathrm{II}   &        & \rho_1 &        & \rho_3 &
    \end{array}
    \]
    The rules of the games are: For any $\xi<\alpha$, letting $f_\xi\colon\omega\to\ord$ be given by $f_{\xi}(i)=\rho_j$ iff $h(j) = \langle \xi, i\rangle$,
    \begin{enumerate}
        \item $f_{\xi}$ embeds $R_{\xi}[\vec{y}]$ into ordinals, i.e.\
        \[
        \langle i, i'\rangle\in R_{\xi}[\vec{y}] \iff f_{\xi}(i)<f_{\xi}(i')
        \]
        for any $i, i'<\omega$,
        \item $f_{\xi}(0)<f_{\xi+1}(0)$, and
        \item if $\xi\in[\omega\cdot n, \omega\cdot(n+1))$, then $f_{\xi}(i)\in [u_n, u_{n+1})$.
    \end{enumerate}
    The first player to violate these rules will lose and II wins any infinite run. This definition induces a map $\vec{y}\mapsto\mathcal{C}[\vec{y}]$ mapping countable sequences $\vec{y}\in\mathbb{R}^{<\omega_1}$ to their game $\mathcal{C}[\vec{y}]$ that belongs to $M$ and we denote this map by $\mathcal{C}$. Notice that the map $\mathcal{C}$ is Lipschitz continuous in the sense that $\vec{y}\uphar\eta^{\vec{y}}_k$ is enough to determine the first $k+1$ rounds of $\mathcal{C}[\vec{y}]$.
    
    We say that a position $P$ of odd length in $\mathcal{C}[\vec{y}]$ is called \emph{good} (over $M$) if the ordinals given by Player II are Silver indiscernibles for $M$, and the ordinals given by Player I are definable in $M$ from the ordinals given by Player II and additional parameters in $V_\lambda^M\cup\{u_0, \ldots, u_m\}$. Also, a good position $P'$ is called an \emph{$M$-shift} of the good position $P$ if $P'$ and $P$ has the same length and each ordinals of $P'$ given by Player I are definable from the ordinals of $P'$ given by Player II in the same way as $P$. The following facts on $\mathcal{C}[\vec{y}]$ are mentioned as Facts 2D.10 and 2D.11 in \cite{Neemanbook}.

    \begin{lem}\label{Martin_game_facts}\leavevmode
        \begin{enumerate}
            \item Let $P$ be a good position of odd length in $\mathcal{C}[\vec{y}]$. Then there is an $M$-shift $P'$ of $P$ and a Silver indiscernible $\rho$ for $M$ such that $\rho$ is a legal move for II following $P'$ in $\mathcal{C}[\vec{y}]$.
            \item Let $\langle P_k\mid k<\omega\rangle$ be a sequence of good positions in $\mathcal{C}[\vec{y}]$ such that $P_{k+1}$ extends an $M$-shift of $P_k$ for any $k<\omega$. Then $\vec{y}\in C$.
        \end{enumerate}
    \end{lem}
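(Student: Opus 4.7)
The plan is to adapt the Martin-indiscernibles technique from Neeman's treatment of fixed-length games in \cite[Section 2D]{Neemanbook}; the combinatorics transfers essentially verbatim here because both facts depend only on the $<\omega^2$-$\mathbf{\Pi}^1_1$ structure of $C$ and on the indiscernibility of Silver indiscernibles for $M$, and not on the length of the underlying game. The key input I would use is that $V_\lambda^M$ is countable in $V$ with $M = L(V_\lambda^M)$, so $M^\sharp$ exists and there is a proper class of Silver indiscernibles for $M$ that are densely distributed inside every interval $[u_n,u_{n+1})$, and any two increasing tuples of such indiscernibles realise the same type over $V_\lambda^M\cup\{u_0,\ldots,u_m\}$. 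As a consequence, the Skolem terms computing I's ordinals in a good position $P$ will produce the corresponding I-ordinals in any $M$-shift $P'$, and the rules of $\mathcal{C}[\vec{y}]$ (order-embedding of each $R_\xi[\vec{y}]$, interleaving rule (2), and the interval constraints of rule (3)) will all be preserved.

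For part (1), let $P$ have length $2k+1$ with II's indiscernibles $v_1<\cdots<v_k$ and I's moves Skolem-definable as $\tau_{2j}(\vec{v},\vec{p})$ for parameters $\vec{p}\in V_\lambda^M\cup\{u_0,\ldots,u_m\}$; let $(\xi,i)=h(2k+1)$, so that II's next move $\rho$ must meet the finitely many order constraints with previously played $f_\xi$-values prescribed by $R_\xi[\vec{y}]$ and by rule (2), and must lie in $[u_n,u_{n+1})$ for the relevant $n$. First I would shift $\vec{v}$ within each interval $[u_n,u_{n+1})$ to sufficiently small indiscernibles $\vec{v}'$ so that the Skolem hull of $\vec{v}'\cup V_\lambda^M\cup\{u_0,\ldots,u_m\}$ is trapped below some indiscernible $\sigma_n<u_{n+1}$. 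By indiscernibility, the resulting position $P'$ is an $M$-shift of $P$, and a fresh Silver indiscernible $\rho$ lying in the gap $(\sigma_n,u_{n+1})$ (and placed into the correct order position relative to the $f_\xi$-values by a case split on the $R_\xi[\vec{y}]$-type of $i$) will then be a legal response for II.

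For part (2), since $P_{k+1}$ properly extends an $M$-shift of $P_k$, the lengths $|P_k|$ tend to $\omega$, and so every game-index $j<\omega$ eventually lies in the domain of some $P_k$. The shift operation preserves the full order type of all played ordinals, so the order relations among the first $|P_k|$ moves remain stable as one passes from $P_k$ to the relevant shifted initial segment of $P_{k+1}$; in the limit I would therefore extract, for every $\xi<\alpha$, a total function $f_\xi\colon\omega\to\mathrm{Ord}$ order-embedding $R_\xi[\vec{y}]$ into the ordinals. This forces $R_\xi[\vec{y}]\in\mathrm{WO}$, i.e.\ $\vec{y}\in C_\xi$, for every $\xi<\alpha$, which combined with the parity bookkeeping built into the choice of $h$ and the Hausdorff-style definition of $C$ will yield $\vec{y}\in C$.

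The main obstacle will be executing the shift step of (1) coherently: one needs a single shift $\vec{v}\mapsto\vec{v}'$ that simultaneously (a) keeps $P'$ good by preserving every order and interval constraint imposed on the already played moves, and (b) opens a gap of indiscernibles within $[u_n,u_{n+1})$ placed exactly where $\rho$ has to sit among the $f_\xi$-values. Following Neeman, the resolution is to push $\vec{v}$ down to a short initial segment of the indiscernibles inside each interval, confining I's Skolem-term values within a controlled hull bounded by the $\sigma_n$, after which the unboundedness of Silver indiscernibles between $\sigma_n$ and $u_{n+1}$ will supply $\rho$ in the required position.
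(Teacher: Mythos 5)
The paper does not actually prove \cref{Martin_game_facts}; it quotes it as Facts 2D.10 and 2D.11 of \cite{Neemanbook}, and your sketch follows that standard Martin-style indiscernibles argument in outline. There are, however, two places where your sketch elides exactly the content of those facts. In part (1), your concrete mechanism --- push II's indiscernibles down so that the relevant Skolem hull is trapped below some indiscernible $\sigma_n<u_{n+1}$, then take $\rho\in(\sigma_n,u_{n+1})$ --- only produces a $\rho$ lying above every ordinal already played in the interval. But rule (1), together with the normalization that $0$ is the $R_\xi[\vec y]$-maximal element, forces every later value $f_\xi(i)$ with $i>0$ to be placed \emph{below} the previously played $f_\xi(0)$, and more generally into a prescribed interior slot among II's earlier values for the same $\xi$ (and, when $i=0$, above an ordinal played by I). For such slots a $\rho$ above the whole hull is illegal; the shift must instead be chosen so that gaps of indiscernibles open up at the required interior position (for instance, re-choosing II's earlier indiscernibles so that each is a limit of indiscernibles above the hull generated by its predecessors). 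You name this as the main obstacle, but the resolution you give is the same ``push down and take $\rho$ above $\sigma_n$'' move, which does not address the interior case.

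In part (2) the step ``order relations are stable under shifts, so in the limit extract total $f_\xi\colon\omega\to\mathrm{Ord}$ order-embedding $R_\xi[\vec y]$'' is a genuine gap: the ordinal values themselves are \emph{not} stable across shifts (only the order pattern of the position is), so there is nothing to take a limit of, and a stable abstract pattern on $\omega$ only embeds $R_\xi[\vec y]$ into a countable linear order, which yields no wellfoundedness. Converting the coherent chain of shifted good positions into an actual assignment of ordinals (equivalently, turning a putative $R_\xi$-descending sequence into a genuine descending sequence of ordinals) is precisely where goodness --- II's ordinals being indiscernibles and I's being fixed Skolem terms in them --- must be used, and it is the heart of Neeman's Fact 2D.11; your sketch assumes it. Moreover, the intended conclusion cannot be ``$R_\xi[\vec y]\in\mathrm{WO}$ for every $\xi<\alpha$, plus parity bookkeeping'': if $\vec y\in C_\xi$ for all $\xi<\alpha$, then the least relevant $\xi$ is $0$ and the payoff definition gives $\vec y\in C$ only when $\alpha$ is even, so an argument establishing wellfoundedness of all the $R_\xi[\vec y]$ proves the wrong thing. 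The real argument has to track which player is responsible for which $R_\xi$ and certify wellfoundedness only for the $\xi$'s tied to the difference-hierarchy structure of $C$; that interaction is exactly what ``parity bookkeeping'' waves through.
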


    The game $G_k(\vec{x}, P, \gamma)$ is defined in a very similar way to the corresponding game in \cite{Neemanbook}. The only difference is the definition of $k$-sequences.

    For $\vec{x}\in\mathbb{R}^{<\omega_1}$ and $k<\omega$, let $\delta_k^{\vec{x}}$ be the $(\eta_k^{\vec{x}} + 1)$-th Woodin cardinal of $M$ in increasing order. For expository simplicity, we fix a sequence $\langle g_k\mid k<\omega\rangle$ such that each $g_k$ is $\mathrm{Col}(\omega, \delta_k^{\vec{x}})$-generic over $M[g_0 * \cdots * g_{k-1}]$. We also write $g^k = g_0 * \cdots * g_{k-1}$. Strictly speaking, we should use the forcing language instead of fixing generics.

    \begin{dfn}
        Let $k<\omega$. A triplet $\langle\vec{x}, P, \gamma\rangle\in M[g^k]$ is called a \emph{$k$-sequence over $M[g^k]$} if
        \begin{enumerate}
            \item $\gamma$ is an ordinal,
            \item $\vec{x}\in\mathbb{R}^{\eta_k^{\vec{x}}}$, and
            \item $P$ is a position after $k$ rounds in $\mathcal{C}[\vec{x}]$.
        \end{enumerate}
        Also, a triplet $\langle\vec{x}, P, \gamma\rangle\in M[g^k]$ is called an \emph{extended $k$-sequence over $M[g^k]$} if it satisfies the conditions (1), (2) and
        \begin{enumerate}
            \item[(3')] $P$ is a position after $k+1$ rounds in $\mathcal{C}[\vec{x}]$
        \end{enumerate}
        instead of (3).
    \end{dfn}

    Using this notion, we define in $M[g^k]$ a finite game $G_k(\vec{x}, P, \gamma)$ for any $k$-sequence $\langle\vec{x}, P, \gamma\rangle$ and a set of reals $A_k(\vec{x}, P^*, \gamma^*)$ for any extended $k$-sequence $\langle\vec{x}, P^*, \gamma^*\rangle$.

    \begin{dfn}\label{dfn:G_k}
        Let $k<\omega$ and let $\langle\vec{x}, P, \gamma\rangle$ be a $k$-sequence over $M[g^k]$. The game $G_k(\vec{x}, P, \gamma)$ is played as follows:
        \[
        \begin{array}{c|ccc}
            \mathrm{I}    &          & \rho_{2k} &             \\ \hline
            \mathrm{II}   & \gamma^* &           & \rho_{2k+1} 
        \end{array}
        \]
        There are two rules:
        \begin{itemize}
            \item II must play an ordinal $\gamma^*$ smaller than $\gamma$.
            \item $P^*:=P^{\frown}\langle\rho_{2k}, \rho_{2k+1}\rangle$ must be a legal position in $\mathcal{C}[\vec{x}]$.
        \end{itemize}
        Then I wins if and only if
        \[
        V_{\delta_{k+1}^{\vec{x}}}^M[g^k]\models\phi_{\mathrm{I}}[\delta_k^{\vec{x}}, \theta_k^{\vec{x}}, \dot{A}_k[g^k](\vec{x}, P^*, \gamma^*)].
        \]
        This definition induces a map $\langle \vec{x}, P, \gamma\rangle\mapsto G_k(\vec{x}, P, \gamma)$ that belongs to $M[g^k]$. Let $G_k=\dot{G}_k[g^k]$ be this map.
    \end{dfn}

    \begin{dfn}\label{dfn:A_k}
        Let $\langle\vec{x}, P^*, \gamma^*\rangle$ be an extended $k$-sequence over $M[g^k]$. Then a set $A_k(\vec{x}, P^*, \gamma^*)$ is a subset of $\mathbb{R}^{\theta_k^{\vec{x}}}$ in $M[g^k][g_k]$ defined by
        \[
        \vec{z}\in A_k(\vec{x}, P^*, \gamma^*) \iff M[g^k][g_k]\models\text{``I wins $G_{k+1}(\vec{x}^{\frown}\vec{z}, P^*, \gamma^*)$''}
        \]
        for any $\vec{z}\in\mathbb{R}^{\theta_k^{\vec{x}}}\cap M[g^k][g_k]$. Let $\dot{A}_k[g^k](\vec{x}, P^*, \gamma^*)\in M[g^k]$ be the canonical name for this set. This definition induces a map $\langle \vec{x}, P^*, \gamma^*\rangle\mapsto \dot{A}_k[g^k](\vec{x}, P^*, \gamma^*)$ that belongs to $M[g^k]$. Let $A_k=\dot{A}_k[g^k]$ be this map.
    \end{dfn}

    Note that \cref{dfn:G_k,dfn:A_k} should be simultaneously done by induction on $\gamma$ and $\gamma^*$, not by induction on $k$. The definition of $G_k(\vec{x}, P, \gamma)$ involves the set $\dot{A}_k[g^k](\vec{x}, P^*, \gamma^*)$, which in turn depend on $G_{k+1}(\vec{x}^{\frown}\vec{z}, P^*, \gamma^*)$ for $\gamma^*<\gamma$. One can find more explanation about these definitions in \cite[Section 2B(1)]{Neemanbook}.

    We can define the mirrored versions of these notions again by switching the roles of Players I and II: a finite game $H_k(\vec{x}, Q, \gamma)$ is defined in $M[g^k]$ for any mirrored $k$-sequence $\langle\vec{x}, Q, \gamma\rangle$ and a set of reals $B_k(\vec{x}, Q^*, \gamma^*)$ for any mirrored extended $k$-sequence $\langle\vec{x}, Q^*, \gamma^*\rangle$ by induction on the third coordinate of a mirrored (extended) $k$-sequence.

    Finally we let $\langle\gamma_L, \gamma_H\rangle$ be the least pair of local indiscernibles of $M$ relative to $u_m$, i.e.,
    \[
    V_{\gamma_L+\omega}^M\models\phi[\gamma_L, c_0, \ldots, c_{k-1}] \iff V_{\gamma_H+\omega}^M\models\phi[\gamma_H, c_0, \ldots, c_{k-1}]
    \]
    for any formula $\phi$ and any $c_0, \ldots, c_{k-1}\in V_{u_m+\omega}^M$. This completes our setup.

    \cref{LGD_from_LC} follows from the following three items:
    \begin{enumerate}
        \item If Player I has a winning strategy in $G_0(\emptyset, \emptyset, \gamma_L)$, then Player I has a winning strategy in $G_{\Delta}(C)$.
        \item If there is an ordinal $\gamma$ such that Player II has a winning strategy in $H_0(\emptyset, \emptyset, \gamma)$, then Player II has a winning strategy in $G_{\Delta}(C)$.
        \item One of the assumptions in (1) and (2) holds.
    \end{enumerate}
    The proofs of these items are almost literally the same as \cite[Section 2]{Neemanbook}. Therefore, We only go through the proof of (1) here and leave (2) and (3) to the reader. So we prove the following lemma:

    \begin{lem}
        If Player I has a winning strategy in $G_0(\emptyset, \emptyset, \gamma_L)$, then Player I has a winning strategy in $G_{\Delta}(C)$.
    \end{lem}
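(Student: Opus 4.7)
The plan is to convert Player I's winning strategy $\sigma_0$ in $G_0(\emptyset, \emptyset, \gamma_L)$ into a strategy in $G_\Delta(C)$ by an inductive construction over the blocks of the diagonal game. The invariant maintained after the $k$-th block $(k<\omega)$ is that Player I holds a winning strategy $\sigma_k$ in a finite game $G_k(\vec{x}^k, P_k, \gamma_k)$, where $\vec{x}^k = \vec{y}\uphar\eta_k^{\vec{y}}$ is the play so far, $P_k$ is a good position of length $2k$ in $\mathcal{C}[\vec{x}^k]$, and $\gamma_k$ is an ordinal bound drawn from the local indiscernibles of $M$. The base case is exactly the hypothesis of the lemma, with $\vec{x}^0 = \emptyset$, $P_0 = \emptyset$, and $\gamma_0 = \gamma_L$. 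Note that by the Lipschitz continuity of $\mathcal{C}$, the first $k$ rounds of $\mathcal{C}[\vec{x}^k]$ and of $\mathcal{C}[\vec{y}]$ agree, so the positions $P_k$ extend coherently as $\vec{y}$ grows.

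Playing the $k$-th block of $G_\Delta$ proceeds in three stages. First, Player I internally simulates a play of $G_k(\vec{x}^k, P_k, \gamma_k)$: she selects an ordinal $\gamma_{k+1}<\gamma_k$ as Player II's move $\gamma^*$ (using local indiscernibility between $\gamma_L$ and $\gamma_H$ to guarantee such an ordinal is available and that the same $\phi_{\mathrm{I}}$-witness persists), feeds it to $\sigma_k$ to obtain Player I's move $\rho_{2k}$, and then invokes \cref{Martin_game_facts}(1) to pick a Silver indiscernible $\rho_{2k+1}$ legally extending an $M$-shift of $P_k$; this yields an extended good position $P^* = P_k^{\frown}\langle\rho_{2k}, \rho_{2k+1}\rangle$ of length $2(k+1)$. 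Second, since $\sigma_k$ was winning, \cref{dfn:G_k} gives
\[
V_{\delta_{k+1}^{\vec{x}}}^M[g^k] \models \phi_{\mathrm{I}}\bigl[\delta_k^{\vec{x}}, \theta_k^{\vec{x}}, \dot{A}_k[g^k](\vec{x}^k, P^*, \gamma_{k+1})\bigr],
\]
and the moreover clause of \cref{main_thm_on_games_of_fixed_length} produces a winning strategy $\tau_k$ for Player I in $\hat{G}_{\mathrm{fixed}}(M, \delta_k^{\vec{x}}, \theta_k^{\vec{x}}, \dot{A}_k[g^k](\vec{x}^k, P^*, \gamma_{k+1}))$. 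Third, Player I plays the actual real moves of the block by running $\tau_k$, treating Player II's incoming reals in $G_\Delta$ as II's real contributions in the rounds of $\hat{G}_{\mathrm{fixed}}$ and supplying iteration branches herself via the weak iterability hypothesis on $M$. At the close of the block the resulting $\vec{z}\in\mathbb{R}^{\theta_k^{\vec{x}}}$ lies in $j(\dot{A}_k[g^k](\vec{x}^k, P^*, \gamma_{k+1}))[h]$ for some generic $h$ over the well-founded direct limit, which by \cref{dfn:A_k} and absoluteness of the formula ``I wins $G_{k+1}$'' between the iterate and $V$ delivers the required $\sigma_{k+1}$, re-establishing the invariant with $P_{k+1} = P^*$.

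The chief technical obstacle will be coherence across the $\omega$ blocks in the presence of iterations: each $\sigma_k$ lives in a different iterate of $M$, so the notion of \emph{good position} and the pool $\{u_0,\ldots,u_m\}$ of indiscernibles must be transported through the iteration embeddings, and the descent $\gamma_{k+1}<\gamma_k$ must survive countably many iterations while remaining inside the local-indiscernibility framework anchored at $\gamma_L$. This is precisely the robustness Neeman's setup was designed for, since the iterates of $M$ produced by $\hat{G}_{\mathrm{fixed}}$ retain the same Silver indiscernibles above their critical points. Granted this, the completed play $\vec{y}\in\mathbb{R}^{<\omega_1}$ comes equipped with a sequence $\langle P_k\mid k<\omega\rangle$ of good positions in $\mathcal{C}[\vec{y}]$ in which each $P_{k+1}$ extends an $M$-shift of $P_k$, so \cref{Martin_game_facts}(2) yields $\vec{y}\in C$ and Player I wins. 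In the degenerate case (1) of \cref{trivial_lemma}, where the run terminates at a finite stage $k$ with $\theta_k^{\vec{x}} = 0$, the corresponding iteration game is trivial and the same conclusion follows from the good position $P_{k+1}$ already constructed.
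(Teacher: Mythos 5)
Your overall architecture (simulate the finite games $G_k$ block by block, use the moreover clause of \cref{main_thm_on_games_of_fixed_length} to get a winning strategy in $\hat{G}_{\mathrm{fixed}}$, run it against the imaginary opponent to produce the block and land in $\dot{A}_k$, close with \cref{Martin_game_facts}(2)) matches the paper. But there is a genuine gap in your bookkeeping of the third coordinate. You have Player II's simulated move in $G_k(\vec{x}^k,P_k,\gamma_k)$ be a \emph{new, strictly smaller} ordinal $\gamma_{k+1}<\gamma_k$, and you carry $\gamma_{k+1}$ as the parameter of the next stage. By the rules of \cref{dfn:G_k} this is forced to be a strict descent at every stage, and in the generic case (clause (2) of \cref{trivial_lemma}) the run of $G_\Delta$ lasts $\omega$ blocks; pushing the $\gamma_k$'s into the direct limit $M_\infty$ along the (order-preserving) iteration maps then produces an infinite strictly descending sequence of ordinals of $M_\infty$, which is impossible precisely when $M_\infty$ is well-founded --- and well-foundedness of $M_\infty$ is exactly what you need at the end to apply \cref{Martin_game_facts}(2). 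So the invariant as you state it cannot be maintained. Your appeal to ``local indiscernibility between $\gamma_L$ and $\gamma_H$'' is used for the wrong purpose (availability of a smaller ordinal needs no indiscernibility), and your closing remark that the descent ``must survive countably many iterations'' is the very thing Neeman's device is designed to \emph{avoid}, not to enable.

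The paper's fix is the promotion trick: at stage $k$ one first passes from ``I wins $G_k(\vec{x}_k,P'_k,\gamma_L)$'' (the inductive hypothesis, after replacing $P_k$ by an $M_k$-shift $P'_k$ via an elementary map fixing $\gamma_L,\gamma_H$ and $\dot{G}_k$) to ``I wins $G_k(\vec{x}_k,P'_k,\gamma_H)$'' using local indiscernibility of the pair $\langle\gamma_L,\gamma_H\rangle$; only \emph{then} does one have II play the ordinal $\gamma_L<\gamma_H$. The winning response of I yields $\phi_{\mathrm{I}}$ about $\dot{A}_k(\vec{x}_k,\overline{P}_{k+1},\gamma_L)$, so after the genericity iteration the next stage's parameter is again (the image of) $\gamma_L$: the third coordinate is reset at every block rather than decreased, and no descending sequence ever arises. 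Without this $\gamma_L$-to-$\gamma_H$ promotion step your construction breaks at the limit, so the proposal as written does not prove the lemma.
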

    \begin{proof}
        Fix an imaginary opponent for II in $G_{\Delta}(C)$ and a weak iteration strategy $\Gamma$ for $M$. We describe how to play against the imaginary opponent by constructing a run $\vec{x}=\langle x_{\xi}\mid\xi<\theta\rangle$ of $G_{\Delta}(C)$. At the start of stage $k<\omega$, we have the following objects:
        \renewcommand{\labelenumi}{(\alph{enumi})}
        \begin{enumerate}
            \item A sequence of reals $\vec{x}_k=\langle x_{\xi}\mid\xi<\eta_k^{\vec{x}_k}\rangle$,
            \item a $\Gamma$-iterate $M_k$ of $M=M_0$ and an iteration embedding $i_k\colon M\to M_k$,
            \item some $M_k$-generic $g^k = g_0 *\cdots * g_{k-1}\subset\mathrm{Col}(\omega, i_1(\delta_0))*\cdots *\mathrm{Col}(\omega, i_k(\delta_{k-1}))$, and
            \item a position $P_k$ after $k$ rounds in $i_k(\mathcal{C})[\vec{x}_k]$.
        \end{enumerate}
        Our induction hypotheses are:
        \renewcommand{\labelenumi}{(\roman{enumi})}
        \begin{enumerate}
            \item $\vec{x}_k \in M_k[g^k]$,
            \item $M_k[g^k]\models$``I wins $i_k(\dot{G}_k)[g^k](\vec{x}_k, P_k, \gamma_L)$,'' where $\langle\gamma_L, \gamma_H\rangle$ is the least pair of local indiscernibles of $M_k$ similar as before, and
            \item $P_k$ is a position in $i_k(\mathcal{C}_k)[\vec{x}_k]$ that is good over $M_k$.
        \end{enumerate}

        Let us start stage $k$ of the construction. By \cref{Martin_game_facts}(1), we can take an $M_k$-shift $P'_k$ of $P_k$ and a Silver indiscernible $\rho$ such that $P'_k{}^{\frown}\langle\rho\rangle$ is a legal move for Player I in $\mathcal{C}[\vec{x}]$. Then there is an elementary embedding $\tau\colon M_k[g^k]\to M_k[g^k]$ that fixes the uniform indiscernibles and $\tau(P_k)=P'_k$. Note that $\tau(\gamma_L)=\gamma_L$ and $\tau(\dot{G}_k)=\dot{G}_k$. By elementarity of $\tau$, I wins the game $i_k(\dot{G}_k)[g^k](\vec{x}_k, P'_k, \gamma_L)$. By local indiscernibility of $\langle\gamma_L, \gamma_H\rangle$, Player I wins the game $i_k(\dot{G}_k)[g^k](\vec{x}_k, P'_k, \gamma_H)$. Let $\sigma_k\in M_k[g^k]$ be a winning strategy for Player I witnessing this. We may assume that $\sigma_k$ is definable in $M_k[g^k]$ with parameters in $V_{i_k(\lambda)}^{M_k}[g^k]\cup\{u_0, \ldots, u_m, P'_k\}$.

        In the game $i_k(\dot{G}_k)[g^k](\vec{x}_k, P'_k, \gamma_H)$, let Player II play $\gamma_L$. Let $\rho_{2k}$ be Player I's response played by $\sigma_k$. Then play the Silver indiscernible $\rho$ as the next move, $\rho_{2k+1}$ for II. Let $\overline{P}_{k+1}=P'_k {}^{\frown}\langle\rho_{2k},\rho_{2k+1}\rangle$. Because $\sigma_k$ is a winning strategy for I, we have
        \[
        V_{i_k(\delta_{k+1}^{\vec{x}_k})}^{M_k}[g^k]\models\phi_{\mathrm{I}}[i_k(\delta_k^{\vec{x}_k}), \theta_k^{\vec{x}_k}, i_k(\dot{A}_k)[g^k](\vec{x}_k, \overline{P}_{k+1}, \gamma_L)]
        \]
        By \cref{main_thm_on_games_of_fixed_length}, Player I wins $\hat{G}_{\mathrm{fixed}}(M_k[g^k], i_k(\delta_k^{\vec{x}_k}), \theta_k^{\vec{x}_k}, \gamma_L)$. The difference between $V_{i_k(\delta^{\vec{x}_k}_{k+1})}^{M_k}[g^k]$ and $M_k[g^k]$ is not a problem here, because all relevant objects have small ranks below $i_k(\delta^{\vec{x}_k}_{k+1})$. Fix a winning strategy $\hat{\Sigma}_k$ witnessing this.
    
        Using this strategy, we can produce a complete run of the genericity iteration game $\hat{G}_{\mathrm{fixed}}(M_k[g^k], i_k(\delta_k^{\vec{x}_k}), \theta_k^{\vec{x}_k}, \gamma_L)$. This yields a sequence $\vec{y}_k:=\langle x_{\xi}\mid\xi\in[\eta_k^{\vec{x}_k},\eta_{k+1}^{\vec{x}_k})\rangle$ of reals and a weak iteration of $M_k$ of length $-1+\theta_k^{\vec{x}_k}+1$. Note that one can regard iteration trees on $M_k[g^k]$ as iteration trees on $M_k$. Let $M_{k+1}$ be the final model of this iteration and let $i_{k, k+1}\colon M_k\to M_{k+1}$ be the iteration embedding. Let $i_{k+1}=i_{k, k+1}\circ i_k$. Because $\hat{\Sigma}_k$ is winning for Player I, there is some $M_{k+1}[g^k]$-generic $g_k\subset\mathrm{Col}(\omega, i_{k+1}(\delta_k^{\vec{x}_k}))$ such that
        \[
        \vec{y}_k\in i_{k+1}(\dot{A}_k)[g^k][g_k](\vec{x}_k, i_{k, k+1}(\overline{P}_{k+1}), \gamma_L).
        \]
        Finally, let $\vec{x}_{k+1}=\vec{x}_k{}^{\frown}\vec{y}_k$ and let $P_{k+1}=i_{k, k+1}(\overline{P}_{k+1})$. Then the inductive hypothesis can be easily checked.

        Note that the construction guarantees that $P_{k+1}$ extends a shift of $i_{k, k+1}(P_k)$ for any $k<\omega$. Then we can show that the terminal position $\vec{x}=\langle x_{\xi}\mid \xi<\alpha\rangle$ is won by player I in $G_{\Delta}(C)$ as follows: Let $M_{\infty}$ be the direct limit of the models $M_k$. This is well-founded because of weak iterability of $M$. Let $P^*_k = i_{k, \infty}(P_k)$, where $i_{k, \infty}\colon M_k\to M_{\infty}$ is the direct limit map. Then each $P^*_k$ is good over $M_{\infty}$ and $P^*_{k+1}$ extends a shift of $P^*_k$ for any $k<\omega$. By \cref{Martin_game_facts}(2), this means $\vec{x}\in C$.
    \end{proof}

    As mentioned above, we leave it to the readers to check that Neeman's arguments also give (2) and (3), which completes the proof sketch of \cref{LGD_from_LC}.

    \section{Determinacy in the Solovay-type model from long game determinacy}\label{DetSol_from_LG}

    In this section, we introduce the Solovay-type model $L(\mathbb{R}, \mu_{<\omega_1})$ and prove $\mathsf{AD}$ in this model from long game determinacy. In particular, we show that (2) implies (3) in \cref{mainthm} in \cref{subsection:DetSol,subsection:SharpSol}. We also make a brief remark on $\mathsf{AD}^+$ and Mouse capturing in $L(\mathbb{R}, \mu_{<\omega_1})$ in \cref{subsection:MCSol}.
    
    \begin{dfn}[Woodin \cite{GenCM}]\label{Sol_filters}
        Let $1\leq\alpha<\omega_1$ and let $Z\subset[\power_{\omega_1}(\mathbb{R})]^{\omega^\alpha}$. The $\alpha$-Solovay game $G^{\mathrm{Sol}}_{\alpha}(Z)$ with payoff set $Z$ is a game on $\mathbb{R}$ of length $\omega^{1+\alpha}$ with typical as follows:
        \begin{table}[ht]
        \centering
            \begin{tabular}{l||llllll}
                \,I  & $x_0$ &      & $x_2$ &     & $\cdots$ &       \\ \hline
                II &      & $x_1$ &      & $x_3$ &          & $\cdots$
            \end{tabular}
        \end{table}
        
        \noindent
        Letting $\sigma_{\eta} = \{x_\xi\mid \xi < \omega\cdot(1+\eta)\}$ for each $\eta<\omega^\alpha$, Player II wins if and only if
        \[
        \langle\sigma_\eta\mid\eta<\omega^\alpha\rangle\in Z.
        \]
        Also, let $\mathcal{F}_{\alpha}$ be the set of all $Z\subset[\power_{\omega_1}(\mathbb{R})]^{\omega^{\alpha}}$ such that Player II has a winning strategy in the game $G^{\mathrm{Sol}}_\alpha(Z)$. It is not hard to see that $\mathcal{F}_\alpha$ is a filter, so we call it the \emph{$\alpha$-Solovay filter}.
    \end{dfn}

    \begin{dfn}[Woodin \cite{GenCM}]
        We define the $\Delta$-Solovay model as
        \[
        L(\mathbb{R}, \mu_{<\omega_1}) := L(\mathbb{R})[\mathcal{F}_{<\omega_1}],
        \]
        where
        \[
        \mathcal{F}_{<\omega_1} = \{\langle\alpha, Z\rangle\mid\alpha<\omega_1\land Z \in \mathcal{F}_{\alpha}\}.
        \]
        For each $\alpha < \omega_1$, we write
        \[
        \mu_{\alpha} = \mathcal{F}_{\alpha} \cap L(\mathbb{R}, \mu_{<\omega_1}).
        \]
    \end{dfn}

    Our target theorem in this section is the following.

    \begin{thm}\label{target_DetSol}
        Assume that the games $G_{\Delta}(C)$ are determined for any $C\subset\mathbb{R}^{<\omega_1}$ that is $\mathbf{\Pi}^1_1$ in the codes. Then
        \[
        L(\mathbb{R}, \mu_{<\omega_1})\models\mathsf{AD} +\mu_{\alpha}\text{ is an ultrafilter on $[\wp_{\omega_1}(\mathbb{R})]^{\omega^\alpha}$ for each $\alpha<\omega_1$}.
        \]
    \end{thm}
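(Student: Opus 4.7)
The plan is to transfer diagonal game determinacy into $L(\mathbb{R}, \mu_{<\omega_1})$ via a coding argument that parallels Trang--Woodin's approach in the fixed-$\alpha$ case but exploits the variable length of $G_\Delta$ to handle the unbounded quantifier complexity of sets in the model.

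As a preliminary step, I would verify that diagonal game determinacy subsumes fixed-length long game determinacy at every countable level. Given a game $G_{\omega^{1+\alpha}}(B)$ with $B \subset \mathbb{R}^{\omega^{1+\alpha}}$ that is $\mathbf{\Pi}^1_1$ in the codes, one sets up a diagonal game in which Player I must open with a real $x_0$ coding a wellorder of order type $\omega^{1+\alpha}$ (losing outright otherwise); the rest of block~1 then has length $\omega^{1+\alpha}$ and simulates a run of the original long game, while the payoff additionally forbids later reals coding wellorders of length $\geq \omega^{1+\alpha}$, so that the diagonal game stabilizes at length $\omega^{1+\alpha}$. The resulting payoff remains $\mathbf{\Pi}^1_1$ in the codes, and winning strategies convert faithfully in both directions. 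Combined with \cref{Trang-Woodin,Trang-Woodin2}, this already gives $L(\mathbb{R}, \mu_{<\alpha}) \models \mathsf{AD} + (\mu_\beta$ is an ultrafilter$)$ for all $\beta < \alpha < \omega_1$.

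To upgrade these conclusions to $L(\mathbb{R}, \mu_{<\omega_1})$ itself, I would argue that every relevant $A \in L(\mathbb{R}, \mu_{<\omega_1})$ admits an iterated Solovay-game representation of countable depth, with each level of quantification corresponding to one Solovay game whose length is determined by the defining formula of $A$ together with the data produced at earlier levels. This nested structure maps onto the block decomposition of $G_\Delta$ from \cref{trivial_lemma}: the block $\vec{x}\uphar[\eta_k^{\vec{x}}, \eta_{k+1}^{\vec{x}})$ executes the $k$-th Solovay game, whose length $\theta_k^{\vec{x}}$ is dictated by the wellorder codes played in the preceding block, and the final payoff reduces to a wellfoundedness check that is $\mathbf{\Pi}^1_1$ in the codes. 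Applied to a Gale--Stewart game on $A \subset \mathbb{R}$ this gives $\mathsf{AD}$ in $L(\mathbb{R}, \mu_{<\omega_1})$; applied to a Solovay game $G^{\mathrm{Sol}}_\alpha(Z)$ it produces one of the players a winning strategy, hence the ultrafilter property of $\mu_\alpha$ in the model.

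The main obstacle will be constructing the iterated Solovay-game representation for arbitrary sets in $L(\mathbb{R}, \mu_{<\omega_1})$ and verifying that the induced $G_\Delta$-payoff is genuinely $\mathbf{\Pi}^1_1$ in the codes. This parallels the Suslin-representation arguments of Trang--Woodin, but with quantifier depths unbounded below $\omega_1$; aligning these quantifier levels with the blocks of the diagonal game while preserving the Lipschitz-continuous coding convention from \cref{trivial_lemma} is the heart of the argument, and is precisely what the variable length of $G_\Delta$ is designed to accommodate.
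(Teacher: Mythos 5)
Your step 1 (simulating fixed-length games inside $G_\Delta$) is plausible in spirit, but it is beside the point and already strains the hypothesis: the Trang--Woodin theorems you invoke are stated for ${<}\omega^2\mathchar`-\mathbf{\Pi}^1_1$ payoffs while you only assume $\mathbf{\Pi}^1_1$ in the codes, and in any case knowing $L(\mathbb{R},\mu_{<\alpha})\models\mathsf{AD}$ for every $\alpha<\omega_1$ says nothing directly about $L(\mathbb{R},\mu_{<\omega_1})$, whose sets of reals are built from the full sequence of filters at arbitrarily high ordinal stages and are not exhausted by the smaller models. So everything rests on your ``upgrade'' step, and that step is not a proof but an assertion: you posit that every set in $L(\mathbb{R},\mu_{<\omega_1})$ admits an iterated Solovay-game representation of countable depth whose induced $G_\Delta$-payoff is $\mathbf{\Pi}^1_1$ in the codes. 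No such representation is available at this stage. The identity $\power(\mathbb{R})\cap L(\mathbb{R},\mu_{<\omega_1})=\Game_{\Delta}({<}\omega^2\mathchar`-\mathbf{\Pi}^1_1)$ alluded to in the paper is obtained only \emph{after} $\mathsf{AD}$ (plus the sharp) in the model is known, via scales; using it to prove $\mathsf{AD}$ in the model is circular. Moreover, even granting such a representation, a winning strategy in $V$ for the representing long game does not by itself produce a winning strategy \emph{inside} $L(\mathbb{R},\mu_{<\omega_1})$ for the length-$\omega$ game on $A$, nor membership of $Z$ or its complement in $\mu_\alpha$ as computed in the model; a separate transfer mechanism is required and your proposal does not supply one.

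The paper's proof is organized around exactly the ideas your sketch is missing. One fixes the least level $\gamma^*$ at which $\mathsf{AD}$ or the ultrafilter property fails and plays a Friedman/Martin--Steel style model-existence game $G_{\phi,\psi}$ of diagonal type, in which one player assembles (by digits) a complete consistent theory whose countable models must collapse to a minimal $\phi$-witness that is an $L$-initial segment of the $g$-derived $\Delta$-Solovay model determined by the run itself (\cref{g-derived_Solovay}); this is why the payoff is $\mathbf{\Pi}^1_1$ in the codes outright, with no need to represent any particular set of the model by a game. The transfer into the model is then done by running the winning strategy against a continuous chain of countable $H_\xi\prec V_\chi$, with the opponent enumerating $\mathbb{R}\cap H_\xi$, stopping at the least $\alpha=\omega_1\cap H_\alpha$, and proving the identification $\pi_\alpha^{-1}(L_{\gamma^*+1}(\mathbb{R},\mu_{<\omega_1}))=L_{\pi_\alpha^{-1}(\gamma^*+1)}(\mathbb{R}_g,\mu^g_{<\alpha})$ as in \cref{play_gives_Solovay_model} --- whose $\supseteq$ direction itself uses the minimality of $\gamma^*$ --- after which varying the opponent's moves yields a strategy $\tau^*\in M_\alpha$ witnessing determinacy of the offending set, and, for the ultrafilter clause, the shifted sequences $g^{(n)}$ with $\mathcal{F}^{g^{(n)}}_\beta=\mathcal{F}^g_\beta$ yield measured-ness of the offending $Z$. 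Aligning ``quantifier levels with blocks'' does not substitute for this reflection-plus-model-existence machinery, so as it stands your proposal has a genuine gap at its central step.
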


    To show this theorem, we introduce the following definition, which implicitly appeared in Trang's PhD thesis \cite{TrangThesis}.

    \begin{dfn}\label{g-derived_Solovay}
        Let $g = \langle\sigma_\xi\mid\xi < \omega^\alpha\rangle \in [\power_{\omega_1}(\mathbb{R})]^{<\omega_1}$. We define a sequence $\langle\eta_k^g\mid k<\omega\rangle$ of ordinals as follows: Let $\eta_0^g = 0$ and $\eta_1^g = 1$. For each $1 < k < \omega$, let
        \[
        \eta_{k+1}^g = \sup\{1 + \|x_\xi\|_{\mathrm{WO}}\mid\xi<\max\{\eta_k^g, \omega^\alpha\}\}.
        \]
        Also, let
        \[
        \mathbb{R}_g = \bigcup_{\xi<\omega^\alpha}g(\xi).
        \]
        For each $\beta < \alpha$, let $k_\beta$ be the least such that $\omega^\beta \leq \eta_{k_\beta+1}$. For any $m < \omega$, we define $f_\beta^m\colon\omega^\beta\to\omega^\alpha$ by
        \[
        f_\beta^m(\xi) = \eta_{k_\beta + m} + \xi
        \]
        for all $\xi < \omega^\beta$. Then we define $\mathcal{F}^g_\beta$ be the set of all $Z\subset[\power_{\omega_1}(\mathbb{R}_g)]^{\omega^\beta}$ such that for all but finitely many $m < \omega$,
        \[
        \langle g(f_\beta^m(\xi))\mid\xi < \omega^\beta\rangle \in Z.
        \]
        It is not hard to see that $\mathcal{F}^g_\beta$ is a filter. Finally, we define the \emph{$g$-derived $\Delta$-Solovay model} as
        \[
        L(\mathbb{R}_g, \mu_{<\alpha}^g) = L(\mathbb{R}_g)[\mathcal{F}_{<\alpha}^g],
        \]
        where
        \[
        \mathcal{F}_{<\alpha}^g = \{\langle\beta, Z\rangle\mid\beta<\alpha\land Z\in\mathcal{F}_{\beta}^g\}.
        \]
        For each $\beta < \alpha$, we write
        \[
        \mu_\beta^g = \mathcal{F}_\beta^g\cap L(\mathbb{R}_g)[\mathcal{F}_{<\alpha}^g].
        \]
    \end{dfn}

    \subsection{Determinacy in the Solovay-type model}\label{subsection:DetSol}

    We start proving \cref{target_DetSol} now. To define a game used in the proof, we need some preparation. The definition of this game is inspired by similar games in \cite{MaSt08, AM20}. These types of model existence games go back to Harvey Friedman \cite{Fri71}.
    
    Consider the language $\mathcal{L}=\{\in, \dot{X}, \dot{A}\}$ of set theory with two additional predicate symbols. We fix an $\mathcal{L}$-formula $\theta$ such that if $M=L_{\alpha}(\dot{X})[\dot{A}]$ for some ordinal $\alpha$ then
    \begin{enumerate}
        \item for any $x\in M$, there are $\beta<\alpha$ and $r\in X$ such that $M\models\theta[\beta, r, x]$, and
        \item for any $\beta<\alpha$ and $r\in X$, there is at most one $x\in M$ such that $M\models\theta[\beta, r, x]$.
    \end{enumerate}
    The formula $\theta$ uniformly defines an $\dot{X}^M$-parametrized family of well-orders the union of whose ranges is $M$. Such a formula can be found analogously to the existence of a definable well-order in $L$.
    
    Also, let $\mathcal{L}^+ = \mathcal{L}\cup\{\dot{x}_i\mid i<\omega\}$ be the language obtained by adding constant symbols $\dot{x}_i$ for every $i<\omega$ to $\mathcal{L}$. We fix a recursive enumeration $\langle\phi_i\mid i<\omega\rangle$ of all $\mathcal{L}^+$-sentences such that the symbol $\dot{x}_i$ does not appear in $\phi_k$ if $k\leq i$. Also, we fix recursive bijections $m$ and $n$ assigning an odd number $>1$ to each $\mathcal{L}^+$-formula $\phi$ such that $m$ and $n$ have disjoint recursive ranges and for every $\phi$, $m(\phi)$ and $n(\phi)$ is larger than $\max\{i\mid\dot{x}_i\text{ occurs in }\phi\}$.

    Let $\phi$ be an $\mathcal{L}$-sentence and let $N$ be an $\mathcal{L}$-structure with $N = L_{\alpha}(\dot{X}^N)[\dot{A}^N]$ for some ordinal $\alpha$. We say that an $\mathcal{L}$-structure $N$ is a \emph{minimal $\phi$-witness} if $N$ satisfies $\phi$ but no proper $L$-initial segment of $N$ does.

    Now we define a game $G_{\phi, \psi}$ for a fixed $\mathcal{L}$-sentence $\phi$ and an $\mathcal{L}$-formula $\psi(u, v_0, v_1, v_2)$ as follows. A typical run of the game looks as follows.
    \[
    \begin{array}{c|ccccccccc}
    \mathrm{I}   & z_0, z_1, a_0 &     & \cdots & v_0, x_0 &     & \cdots & v_1, x_{\omega\cdot\eta_0} &                         & \cdots\\ \hline
    \mathrm{II}  &     & a_1 & \cdots &          & x_1 & \cdots &                            & x_{\omega\cdot\eta_0+1} & \cdots
    \end{array}
    \]
    The game is naturally divided into at most $\omega$ rounds.
    \begin{itemize}
        \item \textsc{Round $0$} has length $\omega$. Player I plays two reals $z_0, z_1$ and then two players take turns playing natural numbers $a_0, a_1, \ldots$ to construct a single real $y = \langle a_n\mid n<\omega\rangle\in\mathbb{R}$. We let $\eta_0 = 1 + \|z_1\|_{\mathrm{WO}}$.
        \item \textsc{Round $1$} has length $\omega\cdot\eta_0$. Player I plays a digit $v_0\in\{0, 1\}$ and then the two players take turns playing reals $x_\xi$, where $\xi<\omega\cdot\eta_0$. Let
        \[
        \eta_1 = \sup\{1 + \|x_{\xi}\|_{\mathrm{WO}}\mid \xi < \omega\cdot\eta_0\}.
        \]
        If $\eta_1 \leq \eta_0$, then we stop the game and Player I wins. Otherwise, we proceed to the next round.
        \item Suppose that we reach \textsc{Round $n+1$}, where $n > 0$. The round has length $\omega\cdot\eta_n$ and Player I starts with playing a digit $v_n\in\{0, 1\}$. Then the two players take turns playing reals $x_\xi$, where $\omega\cdot\eta_{n-1}\leq\xi<\omega\cdot\eta_n$. Let
        \[
        \eta_{n+1} = \sup\{1 + \|x_{\xi}\|_{\mathrm{WO}}\mid \xi < \eta_n\}.
        \]
        If $\eta_{n+1} = \eta_n$, then we stop the game and Player I wins. Otherwise, we proceed to the next round.
    \end{itemize}
    Suppose that $\omega$ many rounds have been played. Then we stop the game and decide a winner of the game as follows. Let $\lambda = \sup\{\eta_n\mid n<\omega\}$ and let $g\colon\lambda\to\wp_{\omega_1}(\mathbb{R})$ be defined by
    \[
    g(\eta) = \{x_\xi\mid\xi < \omega\cdot (1+\eta)\}.
    \]
    Also, let $T$ be the $\mathcal{L}^+$-theory that consists of the sentences $\phi_i$ such that $v_i = 1$. Then Player I wins if and only if
    \begin{enumerate}
        \item $T$ contains the sentences ``$\dot{x}_k\in\mathbb{R}$'' and ``$\dot{x}_k(m) = n$'' whenever $x_{c(k)}(m)=n$, where $c\colon\omega\to\lambda$ is a bijection that is canonically induced from the run as explained in the paragraph before \cref{LGD_from_LC} when coding a run of the game into a single real.
        \item $T$ satisfies the witness property in the sense that for every $\mathcal{L}^+$-formula $\varphi$ with one free variable, $T$ contains the following $\mathcal{L}^+$-sentences:
        \begin{gather*}
            \exists x\,\varphi(x)\rightarrow\exists x\,\exists\alpha\,(\varphi(x)\land\theta(\alpha, \dot{x}_{m(\phi)}, x)),\\
            \exists x\,(\varphi(x)\land x\in\dot{X})\to\varphi(\dot{x}_{n(\varphi)}).
        \end{gather*}
        \item $T$ is a complete, consistent theory such that for every countable model $M$ of $T$ and every model $N^*$ which is the definable closure of $\{x_{\eta}\mid\eta<\omega\cdot\alpha\}$ in $M$, $N^*$ is well-founded and letting $N$ be the transitive collapse of the model $N^*$,
        \begin{enumerate}
            \item $N$ is an $L$-initial segment of the $g$-derived $\Delta$-Solovay model,
            \item $N$ is a minimal $\phi$-witness, and
            \item $N\models\psi[\langle x_\xi\mid\xi < \omega\cdot\eta_0\rangle, y, z_0, z_1]$.
        \end{enumerate}
    \end{enumerate}
    Note that $\mathbf{\Pi}^1_1$-determinacy of $G_\Delta$ implies the determinacy of $G_{\phi, \psi}$ for any $\phi$ and $\psi$.

    \begin{thm}\label{DetSol_from_LongGame}
        Assume that the games $G_{\Delta}(C)$ are determined for any $C\subset\mathbb{R}^{<\omega_1}$ that is $\mathbf{\Pi}^1_1$ in the codes. Then
        \[
        L(\mathbb{R}, \mu_{<\omega_1})\models\mathsf{AD}\land\forall\alpha<\omega_1\,(\mu_{\alpha}\text{ is an ultrafilter on }[\power_{\omega_1}(\mathbb{R})]^{\omega^\alpha}).
        \]
    \end{thm}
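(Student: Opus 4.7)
The plan is to apply $\mathbf{\Pi}^1_1$-in-the-codes diagonal determinacy to each model-existence game $G_{\phi,\psi}$ and project the resulting winning strategies onto (i) standard games $G_\omega(A)$ for $A\in L(\mathbb{R},\mu_{<\omega_1})$, yielding $\mathsf{AD}$, and (ii) the Solovay games $G^{\mathrm{Sol}}_\alpha(Z)$, yielding that each $\mu_\alpha$ is an ultrafilter. The first task is therefore to verify that the winning condition of $G_{\phi,\psi}$ is $\mathbf{\Pi}^1_1$ in the code of the run. This breaks into the consistency, completeness, and witness-property clauses (arithmetical in the play once the enumeration $\langle\phi_i\rangle$ is fixed) and the correctness clause that the definable-closure transitive collapse $N$ of every countable model of $T$ be well-founded, an $L$-initial segment of the $g$-derived Solovay model, a minimal $\phi$-witness, and satisfy $\psi$; through the Lipschitz coding $\vec{x}\mapsto\lceil\vec{x}\rceil$ of runs into single reals, the conjunction is $\mathbf{\Pi}^1_1$ via the usual well-foundedness calculation combined with the fact that complete consistent Henkin theories have essentially unique definable submodels.

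The heart of the argument is a reflection lemma: for any $\mathcal{L}$-sentence $\phi$ that describes a tail of the $L$-hierarchy of $L(\mathbb{R},\mu_{<\omega_1})$ and any completed run of $G_{\phi,\psi}$ producing a sequence $g = \langle \sigma_\eta : \eta < \lambda \rangle$ of ``Solovay-generic'' type, the minimal $\phi$-witness $N$ built from the play is an initial segment of the $g$-derived Solovay model $L(\mathbb{R}_g, \mu_{<\alpha}^g)$ and admits a canonical elementary embedding $\pi: N \hookrightarrow L(\mathbb{R},\mu_{<\omega_1})$ pulling $\mu_\beta^g$ back to $\mu_\beta$ for each $\beta < \alpha$. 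The key observation is that the definition of $\mathcal{F}_\beta^g$ via ``cofinitely-many $m$-shifts landing in $Z$'' was chosen precisely so that the tail behaviour of runs consistent with II's winning strategy in $G^{\mathrm{Sol}}_\beta(Y)$, for $Y \in \mathcal{F}_\beta$, forces shifted slices of $g$ into $Y$ cofinitely often. Establishing this reflection rigorously is the main obstacle, and proceeds along the lines of Trang--Woodin's derived-model analysis for $L(\mathbb{R},\mu)$, lifted to handle the full length $\omega_1$ via the interleaved diagonal structure of $G_{\phi,\psi}$.

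With the reflection lemma in hand, determinacy of a given $A \in L(\mathbb{R},\mu_{<\omega_1})$ is extracted as follows. Write $A = \{y : L(\mathbb{R},\mu_{<\omega_1}) \models \psi[y, z_0, z_1]\}$ for some formula $\psi$ and canonical real parameters $z_0, z_1$, and pick $\phi$ asserting the existence of an $L$-level of the Solovay model large enough to evaluate $\psi$. Determinacy of $G_{\phi,\psi}$ supplies a winning strategy $\sigma$ for one of the players; suppose Player I's. Define a strategy $\sigma'$ for I in $G_\omega(A)$ by having $\sigma'$ first commit to the correct $z_0, z_1$, and then, on each of II's $\omega$-moves, mimic $\sigma$'s Round-0 response while internally simulating the later rounds of $G_{\phi,\psi}$ against a fixed auxiliary II-strategy. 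For any $y$ produced against $\sigma'$, the full simulated play of $G_{\phi,\psi}$ is won by $\sigma$ and delivers $N \models \psi[y, z_0, z_1]$; by the reflection lemma $y \in A$. The symmetric case, with Player II's winning strategy and the formula $\neg\psi$, yields a winning strategy for II in $G_\omega(A)$.

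The ultrafilter property of $\mu_\alpha$ follows by the same template, taking $\psi$ to express that the $\alpha$-th shifted slice of $g$ lies in a given $Z \subset [\wp_{\omega_1}(\mathbb{R})]^{\omega^\alpha}$ with $Z \in L(\mathbb{R},\mu_{<\omega_1})$. The projection now targets the Solovay game $G^{\mathrm{Sol}}_\alpha(Z)$ (respectively its dual on the complement) instead of $G_\omega$, and the reflection lemma ensures that $\mu_\beta^g$-correctness of $N$ transfers to $\mu_\alpha$-correctness in the true model. Consequently $\mu_\alpha$ decides $Z$, so it is an ultrafilter. Assembling the two applications establishes the theorem.
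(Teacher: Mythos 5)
Your overall machinery (the model-existence games $G_{\phi,\psi}$, coding runs as reals, reading off a sequence $g$ and its derived Solovay model, and extracting ordinary strategies from Round~0) matches the paper's, but there is a genuine gap at exactly the point you defer: your ``reflection lemma.'' You assert that for any suitably generic completed run, the minimal $\phi$-witness $N$ embeds elementarily into $L(\mathbb{R},\mu_{<\omega_1})$ with $\mu^g_\beta$ pulled back to $\mu_\beta$, and you propose to prove this ``along the lines of Trang--Woodin.'' But this identification cannot be established outright: only one inclusion of the needed filter agreement comes for free. If $Z$ lies in (the collapse of) $\mathcal{F}_\beta$, then a Solovay-game strategy for $Z$ lying in the elementary hull forces the shifted slices of $g$ into $Z$, so $Z\in\mathcal{F}^g_\beta$; but to see conversely that $Z\notin\overline{\mathcal{F}}_\beta$ implies $Z\notin\mathcal{F}^g_\beta$, one passes to the complement of $Z$, and that step requires that $\mu_\beta$ already be an ultrafilter over the relevant level of the model --- which is part of what the theorem is proving. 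The paper resolves this circularity by a minimal-counterexample induction: it fixes the least $\gamma^*$ such that $L_{\gamma^*+1}(\mathbb{R},\mu_{<\omega_1})$ fails the conjunction, and proves the identification $\pi_\alpha^{-1}(L_{\gamma^*+1}(\mathbb{R},\mu_{<\omega_1}))=L_{\pi_\alpha^{-1}(\gamma^*+1)}(\mathbb{R}_g,\mu^g_{<\alpha})$ level by level, using ultrafilterness below $\gamma^*$ (available by minimality plus elementarity) for the reverse inclusion, and then derives a contradiction inside the collapsed hull (the putative least non-determined, respectively non-measured, set is shown determined, respectively measured, there). Your proposal has no analogue of this induction, and without it the reflection lemma is unsupported; assuming it begs a substantial part of the question.

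Two further points, secondary but not cosmetic. First, the reflection only holds for very special runs: the paper arranges, by playing against a continuous chain $\langle H_\xi\mid\xi<\omega_1\rangle$ of countable elementary submodels with the winning strategy in $H_0$ and stopping at the least $\alpha=\omega_1\cap H_\alpha$, that the run's reals are exactly $\mathbb{R}\cap H_\xi$ at each block, so that $g=\langle\mathbb{R}\cap H_\xi\mid\xi<\alpha\rangle$ and the derived model can be compared with $\pi_\alpha^{-1}$ of the true one. Your ``internal simulation against a fixed auxiliary II-strategy'' will not in general produce such a $g$; you need to complete the opponent's Round~0 play, after the fact, by a chain containing the strategy and the produced real. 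Second, for the ultrafilter clause you propose to project onto the Solovay games $G^{\mathrm{Sol}}_\alpha(Z)$ themselves, i.e.\ to build strategies in games of length $\omega^{1+\alpha}$ on reals; the paper avoids this entirely by the shift maps $g^{(n)}$ and again argues by contradiction about the least non-measured set inside the hull. If you repair the reflection step by the least-$\gamma^*$ induction, your strategy-projection for $\mathsf{AD}$ becomes essentially the paper's Case~1/Case~2 argument, so the right fix is to adopt that inductive framework rather than to seek the reflection lemma in the generality you stated.
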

    \begin{proof}
        Suppose toward a contradiction that there is an ordinal $\gamma$ such that
        \[
        L_{\gamma + 1}(\mathbb{R}, \mu_{<\omega_1})\not\models\mathsf{AD}\land\forall\alpha<\omega_1\,(\mu_{\alpha}\text{ is an ultrafilter on }[\power_{\omega_1}(\mathbb{R})]^{\omega^\alpha}).
        \]
        and let $\gamma^*$ be the least such $\gamma$.

        \begin{claim}\label{ad_does_not_fail}
            $L_{\gamma^* + 1}(\mathbb{R}, \mu_{<\omega_1})\models\mathsf{AD}$.
        \end{claim}
        \begin{proof}
            Let $\phi$ be the sentence ``$\dot{X} = \mathbb{R} \land \neg\mathsf{AD}$'' and let $\psi(y, z_0, z_1)$ be the formula asserting that there is a non-determined set of reals definable from $z_0$ and letting $Z$ be the least such set in the well-order given by $\theta$, $y\in Z$. By our determinacy assumption, the game $G_{\phi, \psi}$ is determined.

            \noindent\textbf{Case 1: Player I has a winning strategy $\tau$ in $G_{\phi, \psi}$.} Take a continuous $\in$-chain $\langle H_{\xi}\mid\xi<\omega_1\rangle$ of countable elementary submodels of $V_\chi$, where $\chi$ is a large enough regular cardinal, such that $\tau\in H_0$. For each $\xi < \omega_1$, let
            \[
            \mathbb{R}_{\xi} = \mathbb{R} \cap H_{\xi},
            \]
            let $M_{\xi}$ be the transitive collapse of $H_{\xi}$, and let $\pi_{\xi}\colon M_{\xi}\to V_{\chi}$ be the uncollapse map. Let $\alpha$ be the least ordinal such that
            \[
            \alpha = \omega_1 \cap H_{\alpha}.
            \]
            Now consider a run of $G_{\phi, \psi}$ where Player I follows her winning strategy $\tau$ and Player II plays reals so that $\mathbb{R}_\xi = \{x_{\omega\cdot\xi + 2i + 1}\mid i < \omega\}$. Since $\tau \in M_\xi$, Player I's moves $x_{\omega\cdot\xi + 2i}$ are all in $M_\xi$. The game will stop exactly at length $\alpha$ and we have
            \[
            g = \langle\mathbb{R}_\xi\mid\xi<\alpha\rangle.
            \]
            Then we consider the $g$-derived $\Delta$-Solovay model as introduced in \cref{g-derived_Solovay} and obtain the following.

            \begin{subclaim}\label{play_gives_Solovay_model}
                $\pi_{\alpha}^{-1}(L_{\gamma^* + 1}(\mathbb{R}, \mu_{<\omega_1})) = L_{\pi_{\alpha}^{-1}(\gamma^* + 1)}(\mathbb{R}_g, \mu^g_{<\alpha})$.
            \end{subclaim}
            \begin{proof}
                Denote the $\Delta$-Solovay model in $M_\alpha$ by $L(\mathbb{R}_\alpha, \overline{\mu}_{<\alpha})$ and let $\overline{\mathcal{F}}_\beta = \pi_{\alpha}^{-1}(\mathcal{F}_\beta)$ for each $\beta < \alpha$. By induction on $\xi\leq\pi_{\alpha}^{-1}(\gamma^*)$, we will show that
                \[
                L_{\xi}(\mathbb{R}_\alpha, \overline{\mu}_{<\alpha}) = L_{\xi}(\mathbb{R}_g, \mu^g_{<\alpha}).
                \]
                This is trivially true for $\xi = 0$ and limit ordinals $\xi$. Consider the case that $\xi = \xi' + 1$. Then it is enough to show that for any $\beta < \alpha$,
                \begin{equation}\label{induction_on_height}
                    \overline{\mathcal{F}}_{\beta}\cap L_{\xi'}(\mathbb{R}_\alpha,\overline{\mu}_{<\alpha}) = \mathcal{F}_{\beta}^g\cap L_{\xi'}(\mathbb{R}_g, \mu_{<\alpha}^g).
                \end{equation}
            
                First, we show the $\subset$ direction of (\ref{induction_on_height}). Let $Z\in\overline{\mathcal{F}}_{\beta}\cap L_{\xi'}(\mathbb{R}_\alpha, \overline{\mu}_{<\alpha})$ witnessed by Player II's strategy $F\colon\mathbb{R}_{\alpha}^{<\omega^{\beta}}\to\mathbb{R}_{\alpha}$ for $G^{\mathrm{Sol}}_\beta(Z)$ in $M_{\alpha}$. Note that $F = \pi_{\alpha}(F)\in H_\alpha$. Find $m<\omega$ such that $\eta_m > \omega^\beta$ and $F\in H_{\eta_m}$. Then for any $\eta\geq\eta_m$, $F = \pi_{\eta}^{-1}(F) \in M_\eta$ and thus $M_\eta$ is closed under $F$, i.e.,
                \begin{equation}\label{closure_under_F}
                    F[\mathbb{R}_{\eta}^{<\omega^\beta}]\subset\mathbb{R}_\eta.
                \end{equation}
                Now fix any $n\geq m$ and consider a run of $G^{\mathrm{Sol}}_\beta(Z)$ in $M_\alpha$, where Player I plays so that
                \[
                \mathbb{R}_{f^n_\beta(\xi)} = \{x_\eta\mid\eta < \omega\cdot(1+\xi)\land\eta\text{ is even}\}
                \]
                for all $\xi < \omega^\beta$ and Player II plays according to $F$. By (\ref{closure_under_F}), it follows that, letting $\langle\sigma_\xi\mid\xi<\omega^\beta\rangle\in[\power_{\omega_1}(\mathbb{R}_\alpha)]^{\omega^\beta}$ be the resulting sequence of the run,
                \[
                \mathbb{R}_{f^n_\beta(\xi)} = \sigma_\xi,
                \]
                for all $\xi < \omega^\beta$. Because $F$ is Player II's winning strategy for $G^{\mathrm{Sol}}_\beta(Z)$ in $M_\alpha$,
                \[
                \langle g(f^n_\beta(\xi))\mid \xi < \omega^\beta\rangle = \langle\sigma_\xi\mid\xi<\omega^\beta\rangle \in Z.
                \]
                It follows that $Z\in\mathcal{F}_\beta^g$.
            
                Now we show the $\supseteq$ direction of (\ref{induction_on_height}). Let $Z\in L_{\xi'}(\mathbb{R}_\alpha, \overline{\mu}_{<\alpha})\setminus\overline{\mathcal{F}}_{\beta}$. Because of the minimality of $\gamma^*$ and elementarity of $\pi_\alpha$, $\overline{\mathcal{F}}_\beta$ is an ultrafilter over
                \[
                L_{\xi'}(\mathbb{R}_\alpha, \overline{\mu}_{<\alpha}) = L_{\xi'}(\mathbb{R}_g, \mu^g_{<\alpha}).
                \]
                Here, the equation is the induction hypothesis for $\xi'$. It follows that the complement of $Z$ is in $\overline{\mathcal{F}}_\beta\cap L_{\xi'}(\mathbb{R}_\alpha, \overline{\mu}_{<\alpha})$. The argument in the last paragraph shows that the complement of $Z$ is in $\mathcal{F}_\beta^g$ and thus $Z\notin\mathcal{F}_\beta^g$.
            \end{proof}

            Suppose toward a contradiction that \cref{ad_does_not_fail} fails. By \cref{play_gives_Solovay_model}, the minimal $\phi$-witness $N$ played by Player I is always $\pi_{\alpha}^{-1}(L_{\gamma^* +1}(\mathbb{R}, \mu_{<\omega_1}))$. Because $\tau$ is a winning strategy for $G_{\phi, \psi}$, letting $z_0$ be the first move by $\tau$, there is a non-determined set of reals definable from $z_0$ in $N$. Let $Z$ be the least such one in the well-order given by $\theta$. By varying Player II's play $\langle a_{2n+1}\mid n<\omega\rangle$, we can define Player I's strategy $\tau^*\in M_\alpha$ in the Gale--Stewart game on $\mathbb{N}$ (of length $\omega$). If Player I follows $\tau^*$ in \textsc{Round 0} in $G_{\phi, \psi}$, the players will end up producing $y \in Z$ in \textsc{Round 0}. This witnesses that $Z$ is determined in $N$, which is a contradiction.

            \noindent\noindent\textbf{Case 2: Player II has a winning strategy $\tau$ in $G_{\phi, \psi}$.} The argument is almost symmetric to the first case, but we sketch it for reader's convenience. Again, we take a continuous $\in$-chain $\langle H_\xi\mid\xi<\omega_1\rangle$ of countable elementary submodels of $V_\chi$, where $\chi$ is an enough large regular cardinal, such that $\tau\in H_0$. We use the same notation as before and let $\alpha$ be the least ordinal such that $\alpha = \omega_1\cap H_\alpha$. Consider a run of $G_{\phi, \psi}$, where Player II follows $\tau$ and Player I plays reals in \textsc{Rounds} 1 to $\omega$ so that $\mathbb{R}_\xi = \{x_{\omega\cdot\xi + 2i + 1}\mid i<\omega\}$. As before, the game will stop exactly at length $\alpha$ and we have $g = \langle\mathbb{R}_\xi\mid\xi<\alpha\rangle$. Again, we have
            \[
            N: = \pi_{\alpha}^{-1}(L_{\gamma^*}(\mathbb{R}, \mu_{<\omega_1})) = L_{\pi_{\alpha}^{-1}(\gamma^*)}(\mathbb{R}_g, \mu^g_{<\alpha}).
            \]
            Assume toward a contradiction that \cref{ad_does_not_fail} fails. Then there is a non-determined set in $N$. So Player I can pick a real $z_0$ such that there is a non-determined set of reals definable from $z_0$. Let $Z$ be the least such set in the well-order given by $\theta$. Player I also plays the theory of $N$. By varying Player I's play $\langle a_{2n}\mid n<\omega\rangle$, we can define Player II's strategy $\tau^*\in M_\alpha$ in the Gale--Stewart game on $\mathbb{N}$ (of length $\omega$). If Player II follows $\tau^*$ in \textsc{Round 0} in $G_{\phi, \psi}$, the players will end up producing $y \notin Z$ in \textsc{Round 0}. This witnesses that $Z$ is determined in $N$, which is a contradiction.            
        \end{proof}

        \begin{claim}\label{ultrafilterness_does_not_fail}
            $L_{\gamma^* +1}(\mathbb{R}, \mu_{<\omega_1})\models\forall\alpha<\omega_1\,(\mu_{\alpha}\text{ is an ultrafilter on }[\power_{\omega_1}(\mathbb{R})]^{\omega^\alpha})$.
        \end{claim}
        \begin{proof}
            Let $\phi$ be the sentence
            \[
            \dot{X} = \mathbb{R}\land\exists\alpha< \omega_1\,(\mu_\alpha\text{ is not an ultrafilter on }[\power_{\omega_1}(\mathbb{R})])
            \]
            and let $\psi$ be the formula asserting that there is a non-measured set by $(\dot{A})_{\|z_1\|_{\mathrm{WO}}}$ definable from $z_0$ and letting $Z$ be the least such set in the well-order given by $\theta$, $\langle x_{\xi}\mid\xi<\omega\cdot\|z_1\|_{\mathrm{WO}}\rangle\in Z$. By our determinacy assumption, the game $G_{\phi, \psi}$ is determined.

            \noindent\textbf{Case 1: Player I has a winning strategy $\tau$ in $G_{\phi, \psi}$.} As before, we take a continuous $\in$-chain $\langle H_\xi\mid\xi<\omega_1\rangle$ of countable elementary submodels of $V_\theta$, where $\theta$ is an enough large regular cardinal, such that $\tau\in H_0$. For each $\xi<\omega_1$, let $\mathbb{R}_\xi, M_\xi, \pi_\xi$ as before. Also, let $\alpha$ be the least such that $\alpha = \omega_1\cap H_\alpha$.

            Now consider a run of $G_{\phi, \psi}$ where Player I follows $\tau$ and Player II plays reals so that $\mathbb{R}_\xi = \{x_{\omega\cdot\xi + 2i + 1}\mid i < \omega\}$. Since $\tau \in M_\xi$, Player I's moves $x_{\omega\cdot\xi + 2i}$ are all in $M_\xi$. The game will stop exactly at length $\alpha$ and we have
            \[
            g = \langle\mathbb{R}_\xi\mid\xi<\alpha\rangle.
            \]
            As in the proof of \cref{play_gives_Solovay_model}, we have
            \[
            \pi_{\alpha}^{-1}(L_{\gamma^* +1}(\mathbb{R}, \mu_{<\omega_1})) = L_{\pi_{\alpha}^{-1}(\gamma^*) +1}(\mathbb{R}_g, \mu^g_{<\alpha}).
            \]
            Now we need another observation. For $n<\omega$, we define $g^{(n)}\colon\alpha\to\power_{\omega_1}(\mathbb{R}_\alpha)$ by
            \[
            g(\eta_m + \xi) = g(\eta_{m+n}+\xi),
            \]
            where $m<\omega$ and $\xi\geq 0$ is such that $\eta_m + \xi < \eta_{m+1}$. Note that for any $\beta<\omega$ and $m, n<\omega$,
            \[
            \langle g^{(n)}(f^m_\beta(\xi))\mid\xi < \omega^\beta\rangle = \langle g(f^{m+n}_\beta(\xi))\mid\xi<\omega^\beta\rangle,
            \]
            because $k_{f^m_\beta(\xi)} = k_\beta + m$ holds. Recall that $k_\beta$ is defined as the least $k<\omega$ such that $\beta\leq\eta_{k+1}$). Therefore, we have
            \[
            \mathcal{F}^{g^{(n)}}_\beta = \mathcal{F}^{g}_\beta
            \]
            and
            \[
            L(\mathbb{R}_g, \mu^g_{<\alpha}) = L(\mathbb{R}_{g^{(n)}}, \mu^g_{<\alpha}).
            \]

            Now suppose that \cref{ultrafilterness_does_not_fail} does not hold. We know that the minimal $\phi$-witness $N$ played by Player I is always $\pi_{\alpha}^{-1}(L_{\gamma^* +1}(\mathbb{R}, \mu_{<\omega_1}))$. Because $\tau$ is a winning strategy for $G_{\phi, \psi}$, letting $z_0, z_1$ be the first two moves by $\tau$, there is a non-measured set by $\mu_{\|z_1\|_{\mathrm{WO}}}$ definable from $z_0$ in $N$. Let $Z$ be the least such set in the well-order given by $\theta$. By varying Player II's play, one can show
            \[
            \langle g^{(n)}(f^0_\beta(\xi))\mid\xi<\omega^\beta\rangle\in Z
            \]
            for all $n<\omega$. It follows that $Z\in\mathcal{F}^{g^{(n)}}_\beta = \mathcal{F}^g_\beta$. This is a contradiction.

            \noindent\textbf{Case 2: Player II has a winning strategy $\tau$ in $G_{\phi, \psi}$.} The argument is almost symmetric in a way that we already explained in the proof of \cref{ad_does_not_fail}, so we leave the details to the reader.
        \end{proof}

        \cref{ad_does_not_fail,ultrafilterness_does_not_fail} contradict the definition of $\gamma^*$, which completes the proof of \cref{DetSol_from_LongGame}.
    \end{proof}

    \subsection{Sharp for the Solovay-type model}\label{subsection:SharpSol}

    In this subsection, we prove the existence of a sharp for the model $L(\mathbb{R}, \mu_{<\omega_1})$ from the ${<}\omega^2\mathchar`-\mathbf{\Pi}^1_1$ determinacy of $G_\Delta$.

    \begin{dfn}\label{dfn:sharp}
        We say that the sharp for $L(\mathbb{R}, \mu_{<\omega_1})$ exists if there is a nontrivial elementary embedding
        \[
        j\colon (L(\mathbb{R}, \mu_{<\omega_1}), \in, \mu_{<\omega_1}) \to (L(\mathbb{R}, \mu_{<\omega_1}), \in, \mu_{<\omega_1})
        \]
        such that $\crit(j)>\Theta^{L(\mathbb{R}, \mu_{<\omega_1})}$.
    \end{dfn}

    One can characterize the sharp for $L(\mathbb{R}, \mu_{<\omega_1})$ in various standard ways. Consequently, generic absoluteness of the sharp for $L(\mathbb{R}, \mu_{<\omega_1})$ holds, i.e., if the sharp for $L(\mathbb{R}, \mu_{<\omega_1})$ exists in some generic extension (by a set-sized forcing) then so it does in the ground model.

    To prove the following theorem, we use the same argument as in \cite[Subsection 2.3.2]{TrangThesis}, which is originally due to Woodin. One interesting aspect of the proof is the use of Jensen's model existence theorem. We recommend to see Chapter 2 of \cite{JensenSubcomplete} or \cite{ChanNotes}.\footnote{The same result was also proved in \cite[Theorem 2.2]{Fri73}. It seems that they independently got the same result almost at the same time. According to the preface of \cite{JensenAdmissible}, Jensen gave a set of lectures on the topic at the Rockefeller University in 1969. According to the preface of \cite{SummerSchoolBook73}, Harvey Friedman gave a lecture on it at the Summer School in Mathematical Logic that was held in Cambridge in 1971.} The first author thanks Andreas Lietz for his help to understand the argument in \cite{TrangThesis}.

    \begin{thm}\label{SharpSol_from_LongGame}
        Suppose that the games $G_{\Delta}(C)$ are determined for any $C\subset\mathbb{R}^{<\omega_1}$ that is $\mathbf{\Pi}^1_1$ in the codes. Then the sharp for the $\Delta$-Solovay model $L(\mathbb{R}, \mu_{<\omega_1})$ exists.
    \end{thm}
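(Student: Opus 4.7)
The plan is to design a model-existence game of the form $G_{\phi,\psi}$ from \cref{subsection:DetSol} whose winning conditions force the existence of a well-founded structure witnessing \cref{dfn:sharp}. I would take $\phi$ to be the $\mathcal{L}$-sentence asserting that $\dot{X} = \mathbb{R}$, that $\dot{A}$ codes a sequence $\overline{\mu}_{<\omega_1}$ of filters for which the ambient model is of the form $L_\alpha(\mathbb{R})[\overline{\mu}_{<\omega_1}]$ satisfying the conclusion of \cref{target_DetSol}, and that $\dot{A}$ additionally encodes a nontrivial elementary self-embedding $j$ of the resulting Solovay-type model with $\crit(j) > \Theta$. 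The auxiliary formula $\psi(y, z_0, z_1)$ would assert that $\|z_1\|_{\mathrm{WO}}$ exceeds $\Theta$ of the Solovay-type model and that $j$ has a generator below an ordinal coded by $z_0$; this mirrors the role of $\psi$ in \cref{ad_does_not_fail,ultrafilterness_does_not_fail}.

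By $\mathbf{\Pi}^1_1$-determinacy of $G_\Delta$, the game $G_{\phi,\psi}$ is determined. The main step is to rule out a winning strategy $\tau$ for Player II. Following the blueprint of the previous subsection, I would take a continuous $\in$-chain $\langle H_\xi\mid\xi<\omega_1\rangle$ of countable elementary submodels of $V_\chi$ containing $\tau$, let $\alpha$ be least with $\alpha = \omega_1\cap H_\alpha$, and play so that $\mathbb{R}_\xi = \{x_{\omega\cdot\xi+2i+1}\mid i<\omega\}$; the analogue of \cref{play_gives_Solovay_model} then identifies the minimal $\phi$-witness $N$ produced against $\tau$ with an initial segment of $L(\mathbb{R}_g,\mu^g_{<\alpha})$. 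Because for cofinally many $\xi < \alpha$ the inclusion $H_\xi \hookrightarrow H_\alpha$ descends, via the transitive collapses, to a nontrivial elementary embedding $k\colon M_\xi \to M_\alpha$ with critical point above the image of $\Theta$ in the collapse of the Solovay-type model, Player I can choose $z_0, z_1$ and digits $v_i$ so that the theory $T$ explicitly decides the existence of precisely such an embedding, yielding a winning run for Player I and contradicting that $\tau$ was winning for Player II.

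Therefore Player I has a winning strategy $\sigma$. Any play of $\sigma$ produces a complete consistent $\mathcal{L}^+$-theory $T$ with Henkin witnesses containing the axioms of $\phi$; by Jensen's model existence theorem (see \cite{JensenSubcomplete,ChanNotes}), the Skolem hull of the played constants in a countable model of $T$ is well-founded, and its transitive collapse $N$ is a minimal $\phi$-witness embedded as an initial segment of a $g$-derived $\Delta$-Solovay model. Decoding $\phi$ inside $N$ gives a nontrivial elementary self-embedding $\overline{j}\colon N\to N$ with $\crit(\overline{j}) > \Theta^N$. A further reflection argument in the style of the proofs of \cref{ad_does_not_fail,ultrafilterness_does_not_fail} then transfers $\overline{j}$ upward to a nontrivial elementary self-embedding of the actual model $L(\mathbb{R}, \mu_{<\omega_1})$ with critical point above $\Theta^{L(\mathbb{R}, \mu_{<\omega_1})}$, completing the construction of the sharp.

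The main obstacle I anticipate is calibrating $\phi$ precisely so that it is true in the intended Solovay-type structure (so that the game is genuinely winnable by Player I) yet strong enough that the minimal $\phi$-witness $N$ cannot be a spurious short initial segment on which the existence of $\overline{j}$ is vacuous. This will require baking into $\phi$ saturation axioms asserting the existence and ultrafilter character of $\mu_\beta$ for every $\beta < \omega_1$, so that $N$ is forced to reach height at least $\omega_1^N$ and hence contain the relevant instance of $\Theta$, together with using the Round $0$ data $z_0, z_1$ to pin the critical point of $\overline{j}$ strictly above the interpretation of $\Theta$ inside $N$.
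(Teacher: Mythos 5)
There is a genuine gap, and the approach diverges from what actually works. Your plan rests on two steps that fail. First, to rule out a winning strategy for Player II you invoke the maps $\pi_\alpha^{-1}\circ\pi_\xi\colon M_\xi\to M_\alpha$; but these are embeddings between \emph{different} transitive collapses, not self-embeddings of a single Solovay-type model, and their critical points are at most $\omega_1^{M_\xi}$, which lies \emph{below} $\Theta$ of the collapsed model $\pi_\xi^{-1}(L(\mathbb{R},\mu_{<\omega_1}))$ (under $\mathsf{AD}$ there, $\Theta>\omega_1$), not above it. So they cannot be used to exhibit a $\phi$-witness in the sense of \cref{dfn:sharp}, and the contradiction you want does not materialize. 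Second, and more fundamentally, baking ``there is a nontrivial self-embedding $j$ with $\crit(j)>\Theta$'' into the payoff $\phi$ makes the argument circular: for Player I to win your $G_{\phi,\psi}$, she must be able to play true theories of well-founded minimal $\phi$-witnesses, i.e.\ she must already have access to the sharp whose existence is the theorem. Determinacy only gives you that \emph{some} player wins; without a non-circular reason that it is Player I, the game buys nothing. Your appeal to Jensen's model existence theorem is also misplaced: that theorem produces (typically ill-founded) end-extensions of a given well-founded structure; it does not guarantee well-foundedness of Skolem hulls of a Henkin model of the played theory.

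The paper's proof goes the other way around and through quite different machinery. The game $G^\sharp$ does not mention the sharp at all: Player I plays a real $y$ and Player II must produce the theory of a (possibly ill-founded) $\omega$-model of ``$V=L(\mathbb{R}_g,\mathcal{F}^g_{<\alpha})+\mathsf{AD}+$ ultrafilters'' whose well-founded part contains $\|y\|_{\mathrm{WO}}$. One shows Player I \emph{cannot} win (by $\mathbf{\Sigma}^1_1$-boundedness), so Player II has a winning strategy $\tau$; running $\tau$ against an enumeration of the $\mathbb{R}_\xi$'s identifies $N=\pi_\alpha^{-1}(L(\mathbb{R},\mu_{<\omega_1}))=L(\mathbb{R}_g,\mu^g_{<\alpha})$. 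The sharp is then extracted not by reading it off a theory but via Harrington's Principle: every $x$-admissible $\gamma>\Theta^N$ is a cardinal of $N$ (\cref{HP_implies_sharp} converts this into the self-embedding by Harrington's classical hull-and-ultrapower argument). Establishing \textsf{HP} is where $\tau$ does its real work: using Friedman/Jensen ill-founded $\omega$-models end-extending $L_\gamma[x]$ together with a back-and-forth construction of mutually generic collapses (\cref{back-and-forth}), the strategy $\tau$ pins down $\power(\kappa)\cap L(\mathbb{R}_g,\mu^g_{<\alpha})$ across all such models (\cref{common_part_of_omega_models}), yielding correctness about well-foundedness and a contradiction if some admissible $\gamma>\Theta^N$ failed to be an $N$-cardinal. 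None of these ingredients — $\Sigma^1_1$-boundedness to decide who wins, Harrington's Principle, the ill-founded-model/mutual-genericity stabilization, and the final transfer via generic absoluteness of the sharp and elementarity of $\pi_\alpha$ — appears in your proposal, and without them the argument does not go through.
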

    \begin{proof}
        In the proof below, ill-founded models of set theory will play essential roles. We assume that the well-founded part of such a model is always transitive.
        
        We define the following two-player game $G^\sharp$, which is a variant of the game $G_{\phi, \psi}$ introduced in the last subsection. A typical run of the game looks as follows.
        \[
        \begin{array}{c|cccccc}
            \mathrm{I}   & m_0, x_0 &          & \cdots & m_1, x_{\omega\cdot\eta_0} &                              & \cdots\\ \hline
            \mathrm{II}  &          & v_0, x_1 & \cdots &                            & v_1, x_{\omega\cdot\eta_0+1} & \cdots
        \end{array}
        \]
        The game is divided into at most $\omega$ many rounds. In each round, Player I and II alternately choose reals $x_\beta$. In addition, at the beginning of each round, Player I plays a natural number $m_i\in\omega$ and Player II plays a digit $v_i\in\{0, 1\}$. The length of the game is determined in the same way as in the definition of $G_{\phi, \psi}$ as follows. The length of \textsc{Round 0} is $\omega$. After \textsc{Round $n$}, we define an ordinal $\eta_n$ from the players' moves, as in the definition of $G_{\phi, \psi}$. If $\eta_{n+1}\leq\eta_n$ then the game stops and Player I wins immediatelely. Otherwise, the game proceeds to the next round, and the length of \textsc{Round $n+1$} set to be $\omega\cdot\eta_n$. Suppose that $\omega$ many rounds have been played. Then we stop the game and decide a winner of the game as follows. Let $\lambda = \sup\{\eta_n\mid n<\omega\}$ and let $g\colon\lambda\to\power_{\omega_1}(\mathbb{R})$ be defined by
        \[
        g(\eta) = \{x_\xi\mid\xi<\omega\cdot(1+\eta)\}.
        \]
        Also, let $y=\langle m_i\mid i<\omega\rangle$ and let $T$ be the theory that consists of the $\mathcal{L}^+$-sentences $\phi_i$ such that $v_i = 1$. Then Player II wins if and only if
        \begin{itemize}
            \item either $y\notin\mathrm{WO}$, or
            \item $T$ is a theory of an $\omega$-model\footnote{Recall that a model $M$ of some theory is \emph{$\omega$-model} if $\omega^M = \omega$. So, such an $M$ might be ill-founded.} $M$ such that
            \begin{align*}
                M\models\, & \mathsf{ZF}^{-}\footnotemark +V=L(\mathbb{R}_g, \mathcal{F}^g_{<\alpha})\\
                & +\mathsf{AD}+\forall\beta<\omega_1\,(\mathcal{F}^g_{\beta}\text{ is an ultrafilter on }[\power_{\omega_1}(\mathbb{R}_g)]^{\omega^\beta})
            \end{align*}
            and $\|y\|_{\mathrm{WO}}$ is included in the well-founded part of $M$. 
        \end{itemize}
        \footnotetext{$\mathsf{ZF}^-$ denotes $\mathsf{ZF}$ without the Power Set Axiom.}

        \begin{claim}
            Player I cannot have a winning strategy in this game.
        \end{claim}
        \begin{proof}
            This follows from $\mathbf{\Sigma}^1_1$ boundedness as in \cite[Lemma 2.3.13]{TrangThesis}.
        \end{proof}

        By our determinacy assumption, the claim implies that Player II has a winning strategy $\tau$. As in the proofs of the claims in the proof of \cref{DetSol_from_LongGame}, we take a continuous $\in$-chain $\langle H_\xi\mid\xi<\omega_1\rangle$ of countable elementary submodels of $V_\theta$, where $\theta$ is a large enough regular cardinal, such that $\tau\in H_0$. For each $\xi<\omega_1$, let $\mathbb{R}_\xi, M_\xi, \pi_\xi$ be as in the proof of \cref{DetSol_from_LongGame}. Also, let $\alpha$ be the least ordinal such that $\alpha = \omega_1\cap H_\alpha$.

        Now consider a run of $G^{\sharp}$ where Player II follows $\tau$ and Player I plays reals so that $\mathbb{R}_\xi = \{x_{\omega\cdot\xi + 2i}\mid i < \omega\}$. Since $\tau \in M_\xi$, Player I's moves $x_{\omega\cdot\xi + 2i}$ are all in $M_\xi$. The game will stop exactly at length $\alpha$ and we have
        \[
        g = \langle\mathbb{R}_\xi\mid\xi<\alpha\rangle.
        \]
        Then
        \[
        N :=L(\mathbb{R}_g, \mu^g_{<\alpha})=\pi_{\alpha}^{-1}(L(\mathbb{R}, \mu_{<\omega_1})).
        \]
        From now on, we work in $M_{\alpha}[G]$, where $G\subset\mathrm{Col}(\omega, \alpha)$ is $M_{\alpha}$-generic such that some $x\in\mathbb{R}^{M_{\alpha}[G]}$ codes $g$ and $\tau$. Fix such an $x$.

        \begin{claim}\label{HP_implies_sharp}
            In $M_{\alpha}[G]$, the following statement implies that a sharp for $N$ exists in the sense of \cref{dfn:sharp}:
            \begin{itemize}
                \item[(\textsf{HP})\footnotemark] For any ordinal $\gamma>\Theta^N$, if $\gamma$ is $x$-admissible then it is a cardinal in $N$.\footnotetext{\textsf{HP} stands for Harrington's Principle.}
            \end{itemize}
        \end{claim}
        \begin{proof}
            This is an argument in \cite{Har78}. Let $\lambda>\Theta^N$ be an $\omega$-closed cardinal and let $X\prec L_{\lambda^{++}}[x]$ be such that $\lambda\subset X, \lvert X\rvert = \lambda$, and $X^{\omega}\subset X$. Then the transitive collapse of $X$ is $L_{\beta}[x]$ for some $x$-admissible ordinal $\beta<\lambda^+$. Let $\pi\colon L_{\beta}[x]\to X$ be the uncollapse map and let $\kappa = \crit(\pi)$. By \textsf{HP}, $\beta$ is a cardinal of $N$. Then $(\kappa^+)^N\leq\beta$, so we can define an $N$-ultrafilter
            \[
            U = \{A\subset\power(\kappa)\cap N\mid\kappa\in\pi(A)\}.
            \]
            Since $X^\omega\subset X$, $U$ is countably complete. Also, since $\kappa\geq\lambda>\Theta^N$, $U$ is $\mathbb{R}^N$-complete. Then we can define an elementary embedding $j\colon N\to N$ with critical point $\kappa$ by forming iterated ultrapowers of $N$ by $U$.
        \end{proof}

        By \cref{HP_implies_sharp}, it is enough to show that \textsf{HP} holds in $M_{\alpha}[G]$, because then a sharp for $N$ exists in $M_{\alpha}$ by generic absoluteness of the sharp and thus a sharp for $L(\mathbb{R}, \mu_{<\omega_1})$ exists in $V$ by elementarity of $\pi_{\alpha}$.

        Suppose toward a contradiction that $\gamma>\Theta^N$ is $x$-admissible, but not a cardinal in $N$. We may assume that $\gamma<\omega_1^{M_{\alpha}[G]}$ by working in a generic extension of $M_{\alpha}[G]$ if necessary. Let $\kappa$ be the largest cardinal of $N$ below $\gamma$. Let $G^{\sharp}\uphar\mathbb{R}_\alpha$ be the game $G^{\sharp}$ except that players are only allowed to play reals in $\mathbb{R}_\alpha$. In what follows, a wellfounded part of an $\omega$-model is always assumed to be transitive.

        \begin{claim}\label{common_part_of_omega_models}
            For any countable $\omega$-models $\langle P_0, E_0\rangle$ and $\langle P_1, E_1\rangle$ of $\mathsf{ZFC}^{-}$ such that for each $i\in 2$,
            \begin{itemize}
                \item $x\in P_i$,
                \item the ordinal height of the wellfounded part of $\langle P_i, E_i\rangle$ is $\gamma$,
                \item $\langle P_i, E_i\rangle\models\tau$ is a winning strategy for Player II in $G^{\sharp}\uphar\mathbb{R}_\alpha$ and $x$ codes $g$ and $\tau$,
            \end{itemize}
            we have $(\power(\kappa)\cap L(\mathbb{R}_g, \mu^g_{<\alpha}))^{\langle P_0, E_0\rangle} = (\power(\kappa)\cap L(\mathbb{R}_g, \mu^g_{<\alpha}))^{\langle P_0, E_0\rangle}$.
        \end{claim}
        \begin{proof}
            Let $\langle P_0, E_0\rangle, \langle P_1, E_1\rangle$ be any countable $\omega$-models satisfying the above assumption. Let $A\subset\kappa$ be in $L(\mathbb{R}_g, \mu^g_{<\alpha})^{\langle P_0, E_0\rangle}$. We want to show that $A\in L(\mathbb{R}_g, \mu^g_{<\alpha})^{\langle P_1, E_1\rangle}$.

            By Friedman's classical result \cite[Theorem 3.2]{Fri73}, the order type of the ordinals in $\langle P_0, E_0\rangle$ is $\gamma+\gamma\mathbb{Q}$. We take nonstandard ordinals $a_0\in P_0$ and $a_1\in P_1$ such that $\langle P_0, E_0\rangle\models A\in L_{a_0}(\mathbb{R}_g, \mu^g_{<\alpha})$ and
            \[
            \{\xi\in P_0\mid \xi\mathbin{E_0}a_0\}\simeq\{\xi\in P_1\mid\xi\mathbin{E_1}a_1\}\simeq\gamma+\gamma\mathbb{Q}.
            \]
            Let $\mathbb{P}_i = \mathrm{Col}(\omega, a_i)^{\langle N_i, E_i\rangle}$ for each $i\in 2$. Note that there is an isomorphism $\iota\colon\mathbb{P}_0\to\mathbb{P}_1$.

            \begin{subclaim}\label{back-and-forth}
                There are $\langle P_i, E_i\rangle$-generic $H_i\subset\mathbb{P}_i$ for each $i\in 2$ such that
                \begin{enumerate}
                    \item $\iota[H_0] = H_1$, and
                    \item $P_0[H_0]\cap P_1 \cap V_{\gamma} = P_0\cap P_1[H_1] \cap V_{\gamma} = P_0\cap P_1 \cap V_{\gamma}$.
                \end{enumerate}
            \end{subclaim}
            \begin{proof}
                The construction of such generic filters is done by a back-and-forth argument as follows: Fix an enumeration $\vec{d} = \langle d_i\mid i<\omega\rangle$ of $P_0$ and an enumeration $\vec{e}=\langle e_i\mid i<\omega\rangle$ of $P_1$. Also, fix a recursive bijection $b\colon\omega\to\omega\times\omega$ and let $b(k)=\langle b_0(k), b_1(k)\rangle$ for any $k<\omega$.

                We will inductively define a decreasing sequence $\langle p_k\mid k<\omega\rangle$ of conditions in $\mathbb{P}_0$ and a decreasing sequence $\langle q_k\mid k<\omega\rangle$ of conditions in $\mathbb{P}_1$. We start with $p_0 = q_0 = \emptyset$. Suppose that $p_k$ and $q_k$ have been defined. Let $p'_k = \iota^{-1}(q_k)$. To define $p_{k+1}$, we split into three cases depending on what the $b_0(k)$-th element of the enumeration $\vec{d}$ of $P_0$ is:
                \begin{itemize}
                    \item \textsc{Case} 1. $d_{b_0(k)}$ is a dense subset of $\mathbb{P}_0$ in $\langle P_0, E_0\rangle$.
                    \item \textsc{Case} 2. $d_{b_0(k)}$ is a $\mathbb{P}_0$-name in $\langle P_0, E_0\rangle$.
                    \item \textsc{Case} 3. $d_{b_0(k)}$ is not a dense subset of $\mathbb{P}_0$ nor a $\mathbb{P}_0$-name.
                \end{itemize}
                In \textsc{Case} 1, pick any $p_{k+1}\leq_{\mathbb{P}} p'_k$ in $d_{b_0(k)}$. In \textsc{Case} 3, let $p_{k+1} = p'_k$. Suppose that \textsc{Case} 2 holds. If $e_{b_1(k)}\in P_0$, then let $p_{k+1} = p'_k$. Otherwise, let $x_k\in P_0$ be minimal with respect to $\vec{d}$ such that $p'_k$ does not decide whether $\check{x}_k\in d_{b_0(k)}$, if such an $x_k$ exists. If $x_k$ is defined, choose $p_{k+1}\leq_{\mathbb{P}_0} p'_k$ so that
                \begin{align*}
                    x_k\in e_{b_1(k)} &\Rightarrow p_{k+1}\Vdash_{\mathbb{P}_0}^{\langle P_0, E_0\rangle}\check{x}_k\notin d_{b_0(k)},\\
                    x_k\notin e_{b_1(k)} &\Rightarrow p_{k+1}\Vdash_{\mathbb{P}_0}^{\langle P_0, E_0\rangle}\check{x}_k\in d_{b_0(k)}.
                \end{align*}
                If $x_k$ is undefined, let $p_{k+1} =p'_k$. This completes the definition of $p_{k+1}$. Let $q'_k = \iota(p_{k+1})$ and we define $q_{k+1}$ from $q'_k$ in the same way as we defined $p_{k+1}$ from $p'_k$.
                Now let $H_0$ (resp.\ $H_1$) be the upward closure of the $p_k$'s (resp.\ $q_k$'s) for now. 

                To see that the filter $H_0$ is $\langle P_0, E_0\rangle$-generic, let $D\in P_0$ be such that $\langle P_0, E_0\rangle\models$``$D$ is a dense subset of $\mathbb{P}_0$.'' Let $k<\omega$ be such that $D = d_{b_0(k)}$. Then \textsc{Case} 1 of the construction guarantees that $p_{k+1}\in d_{b_0(k)}$, so $H_0\cap D\neq\emptyset$. Symmetrically, one can show the $\langle P_1, E_1\rangle$-genericity of $H_1$.

                To see that $\iota[H_0] \subset H_1$, let $p\in H_0$. Let $k<\omega$ be such that $p_{k+1}\leq p$. Then $\iota(p_{k+1}) = q'_k \in H_1$ and thus $\iota(p)\in H_1$. Symmetrically, one can show $\iota^{-1}[H_1]\subset H_0$ as well, so $\iota[H_0]=H_1$.

                Finally, we show that $P_0[H_0]\cap P_1 \cap V_\gamma \subset P_0\cap P_1$. Suppose otherwise. Then for some $\mathbb{P}_0$-name $\tau\in P_0$, $\tau^{H_0}\in (P_1\setminus P_0)\cap V_\gamma$. By choosing an $E_0$-minimal such $\tau$, we may assume that $\tau^{H_0}\subset P_0$. Let $k<\omega$ be such that $\tau = d_{b_0(k)}$ and $\tau^{H_0} = e_{b_1(k)}$. Now we are in \textsc{Case} 2. If $x_k$ is defined, then $p_{k+1}$ forces $x_k$ to be in the symmetric difference of $d_{b_0(k)}^{H_0}$ and $e_{b_1(k)}$. If $x_k$ is undefined, then $p'_k$ decides whether $\check{x}_k\in d_{b_0(k)}=\tau$ and thus $\tau^{H_0}\in P_0$. In either case, we get a contradiction. Symmetrically, we can show that $P_0\cap P_1[H] \cap V_\gamma \subset P_0\cap P_1$. This completes the proof of the subclaim.
            \end{proof}

            Fix $H_0$ and $H_1$ as in \cref{back-and-forth}. Let $h\in\mathbb{R}$ be coding
            \[
            \{\langle i, j\rangle\in\omega\times\omega\mid H_0(i)\mathbin{E_0}H_0(j)\}=\{\langle i, j\rangle\in\omega\times\omega\mid H_1(i)\mathbin{E_1}H_1(j)\}.
            \]
            Note that $h\in P_0[H_0]\cap P_1[H_1]$. By $\Sigma^1_1$-absoluteness, in both $P_0[H_0]$ and $P_1[H_1]$, $\tau$ is a winning strategy for Player II in $G^{\sharp}\uphar\mathbb{R}_\alpha$. So, the response of $\tau$ when Player I plays $g$ and $h$, denoted by $\langle g, h\rangle * \tau$, is in $P_0[H_0]\cap P_1[H_1]$. In $P_0[H_0]$, $h$ codes a wellordering of length $a_0$ and $\langle g, h\rangle * \tau$ codes an $\omega$-model of $V=L(\mathbb{R}_g, \mu^g_{<\alpha})$ with standard part ${>}a_0$. Therefore, in $P_0[H_0]$, $A$ can be defined from $\langle g, h\rangle * \tau$ in a simple way. Then $A\in P_1[H_1]$, because $\tau(\mathbb{R}_\alpha, h)\in P_1[H_1]$. By \cref{back-and-forth} (2), $A\in P_1$ as desired.
        \end{proof}

        Now fix an $\omega$-model $\langle P, E\rangle$ satisfying the assumption in \cref{common_part_of_omega_models}.

        \begin{claim}\label{well-founded_correctness}
            Let $R\subset\kappa\times\kappa$ be a linear order in $L(\mathbb{R}_g, \mu^g_{<\alpha})^{\langle P, E\rangle}$. Then
            \[
            R\text{ is wellorderd}\iff\langle P, E\rangle\models R\text{ is wellordered}.
            \]
        \end{claim}
        \begin{proof}
            The forward direction is trivial. To show the converse, suppose that $R$ is illfounded. By Jensen's model existence theorem, there is an $\omega$-model $\langle P', E'\rangle$ end-extending $L_{\gamma}[x]$ such that
            \[
            \langle P', E'\rangle\models R\text{ is ill-founded.}
            \]
            By \cref{common_part_of_omega_models}, the witness for the illfoundedness of $R$ is in $(\power(\kappa)\cap L(\mathbb{R}_g, \mu^g_{<\alpha}))^{\langle P, E\rangle} = (\power(\kappa)\cap L(\mathbb{R}_g, \mu^g_{<\alpha}))^{\langle P', E'\rangle}$, so $R$ is also illfounded in $\langle P, E\rangle$.
        \end{proof}

        Since $\gamma$ is the ordinal height of the wellfounded part of $\langle P, E\rangle$, \cref{well-founded_correctness} implies that $\eta:=(\kappa^+)^{L(\mathbb{R}_g, \mu^g_{<\alpha})^{\langle P, E\rangle}} < \gamma$. Now recall that $\kappa$ was defined as the largest cardinal of $N$ below $\gamma$. Then by Jensen's model existence theorem, there is another $\omega$-model $\langle P', E'\rangle$ extending $L_{\gamma}[x]$ such that
        \[
        \langle P', E'\rangle \models \eta\text{ is not a cardinal in }L(\mathbb{R}_g, \mu^g_{<\alpha}).
        \]
        The bijection between $\kappa$ and $\eta$ can be canonically coded into a subset of $\kappa$ in $(\power(\kappa)\cap L(\mathbb{R}_g, \mu^g_{<\alpha}))^{\langle P, E\rangle} = (\power(\kappa)\cap L(\mathbb{R}_g, \mu^g_{<\alpha}))^{\langle P', E'\rangle}$, so $\eta$ is also not a cardinal in $L(\mathbb{R}_g, \mu^g_{<\alpha})^{\langle P, E\rangle}$ by \cref{common_part_of_omega_models}. This contradicts the definition of $\eta$ and finishes the proof of \cref{SharpSol_from_LongGame}.
    \end{proof}

    \subsection{Brief remark on $\mathsf{AD}^+$ and Mouse Capturing}\label{subsection:MCSol}

    \begin{dfn}
        Mouse Capturing ($\mathsf{MC}$) is the statement that for any reals $x, y\in\mathbb{R}$, if $x$ is ordinal definable from $y$, then there is an $\omega_1+1$-iterable sound $y$-premouse $\mathcal{M}$ projecting to $\omega$ such that $x\in\mathcal{M}$.
    \end{dfn}

    The core model induction arguments in \cite{DetLRmu} can be generalized to show:

    \begin{thm}\label{mouse_capturing}
        Suppose that
        \[
        L(\mathbb{R}, \mu_{<\omega_1})\models\mathsf{AD}+\forall\alpha<\omega_1\,(\mu_{\alpha}\text{ is an ultrafilter on }[\power_{\omega_1}(\mathbb{R})]^{\omega^\alpha}).
        \]
        Then the following hold in $L(\mathbb{R}, \mu_{<\omega_1})$:
        \begin{enumerate}
            \item $\mathrm{Lp}(\mathbb{R})\models\mathsf{AD}^+ + \Theta = \theta_0 + \mathsf{MC}$,
            \item $\power(\mathbb{R}) = \mathrm{Lp}(\mathbb{R}) \cap \power(\mathbb{R})$, and thus
            \item $\mathsf{AD}^+ + \Theta = \theta_0$.
        \end{enumerate}
    \end{thm}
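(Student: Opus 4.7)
The plan is to adapt Trang's core model induction for the original Solovay model $L(\mathbb{R}, \mu)$ from \cite{DetLRmu} to the full $\omega_1$-length hierarchy $L(\mathbb{R}, \mu_{<\omega_1})$. Concretely, I would proceed by a simultaneous induction along the fine-structural hierarchy of $L(\mathbb{R}, \mu_{<\omega_1})$, building up $\mathrm{Lp}(\mathbb{R})$ and showing that every set of reals appearing at each stage lies in some $\omega_1$-iterable sound $\mathbb{R}$-premouse projecting to $\mathbb{R}$. The outputs of the induction are precisely the three claims: (1) gives the internal theory of $\mathrm{Lp}(\mathbb{R})$, (2) records that the induction reaches all of $\wp(\mathbb{R})$, and (3) follows immediately by transferring the theory of $\mathrm{Lp}(\mathbb{R})$ up.

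For statement (1), the Steel--Woodin framework for $\mathrm{Lp}$-type models shows that $\mathrm{Lp}(\mathbb{R})$, being a direct limit of countably iterable sound $\mathbb{R}$-premice, automatically satisfies $\mathsf{AD}^+$ once enough background determinacy is present (here supplied by $\mathsf{AD}$ in $L(\mathbb{R}, \mu_{<\omega_1})$, together with the existence of mice with many Woodins which emerges from the induction). The property $\Theta = \theta_0$ holds because every set of reals in $\mathrm{Lp}(\mathbb{R})$ is $\Sigma_1$-definable from a mouse parameter and an ordinal, so cannot sit above $\theta_0$. Finally $\mathsf{MC}$ is essentially definitional for $\mathrm{Lp}(\mathbb{R})$.

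The heart of the argument is (2). I would argue by induction through the levels $L_\alpha(\mathbb{R}, \mu_{<\omega_1})$ that every set of reals definable at level $\alpha$ belongs to $\mathrm{Lp}(\mathbb{R})$. At successor $\Sigma_1$-gaps and within gaps, the standard scales analysis and reflection arguments from Trang's proof go through verbatim. The genuine new content is at each stage where one must absorb a new predicate $\mu_\alpha$. Here the ultrafilter hypothesis is essential: given a candidate set $A \subset \wp_{\omega_1}(\mathbb{R})^{\omega^\alpha}$ definable in the current model, $\mu_\alpha$ decides $A$, and a Solovay-type Skolem-hull argument (parallel to the one used in \cref{DetSol_from_LongGame} and \cref{SharpSol_from_LongGame}) extracts from a winning strategy in $G^{\mathrm{Sol}}_\alpha$ the generic-like objects needed to feed into the $\mathrm{K}^c$-style background construction, producing a mouse that captures $A$ and, inductively, the hybrid predicate $\mu_{<\alpha+1}$.

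The main obstacle is the uniformity required at limit stages $\alpha < \omega_1$, where one must amalgamate the iteration strategies and branch choices produced at each $\beta < \alpha$ into a single coherent, countably iterable $\mathbb{R}$-premouse capturing $\mu_{<\alpha}$. This is where the argument genuinely extends \cite{DetLRmu}: one must track, transfinitely, that the background universes used at stage $\beta$ agree sufficiently to allow the stacks of mice to cohere, and one must verify $\omega_1$-iterability of the resulting limit structure using $\omega_1$-guided $\mathcal{Q}$-structures drawn from $\mathrm{Lp}(\mathbb{R})$ itself — which is only legitimate once the induction has already passed through each $\beta < \alpha$. Once (2) is in hand, (3) is immediate: $\wp(\mathbb{R})^{L(\mathbb{R}, \mu_{<\omega_1})} = \wp(\mathbb{R})^{\mathrm{Lp}(\mathbb{R})}$ lifts $\mathsf{AD}^+ + \Theta = \theta_0$ from $\mathrm{Lp}(\mathbb{R})$ to $L(\mathbb{R}, \mu_{<\omega_1})$.
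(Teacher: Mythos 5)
You should first be aware that the paper does not actually prove \cref{mouse_capturing}: it states that ``the core model induction arguments in \cite{DetLRmu} can be generalized'' and then explicitly omits the proof, noting that the theorem is not needed for \cref{mainthm} (a failure of $\mathsf{MC}$ would already exceed the relevant consistency strength), though its conclusion is used in \cref{reversed_dervied_model_theorem} and \cref{capturing_OD_sets}. Your proposal follows exactly the route the paper gestures at --- a core model induction through the levels of $L(\mathbb{R}, \mu_{<\omega_1})$ generalizing Trang's argument for $L(\mathbb{R},\mu)$ --- so at the level of strategy there is no divergence to compare.

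As a proof, however, the proposal has a genuine gap: it names, but does not carry out, precisely the part that is new relative to \cite{DetLRmu}. Saying that at each stage ``$\mu_\alpha$ decides $A$'' and that ``a Solovay-type Skolem-hull argument extracts the generic-like objects needed to feed into the $\mathrm{K}^c$-style background construction'' is a gesture at Trang's technique, not an argument; the whole difficulty is to show that the scales/gap analysis and the witness-capturing arguments of the induction survive the presence of the additional predicates $\mu_\alpha$ for \emph{all} $\alpha<\omega_1$, and in particular that the hierarchy of filters adds no sets of reals beyond $\mathrm{Lp}(\mathbb{R})$ (the analogue of Trang's $\wp(\mathbb{R})\cap L(\mathbb{R},\mu)=\wp(\mathbb{R})\cap\mathrm{Lp}(\mathbb{R})$ analysis, which in \cite{DetLRmu,Tr15} rests on a detailed structure theory of the model, including its Prikry-style and derived-model representations, not merely on a hull argument). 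Likewise the limit-stage amalgamation of strategies that you flag as ``the main obstacle'' is exactly where the induction must be redone, and your text only records that it must be done. So the proposal is an accurate plan consistent with the paper's intended (but unwritten) argument, but it does not yet constitute a proof; to make it one you would have to work through the generalized core model induction in the style of \cite{DetLRmu}, or else argue as the paper does that the theorem can be bypassed for the main result.
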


    Although the conclusion of \cref{mouse_capturing} is necessary in the proof of \cref{reversed_dervied_model_theorem} and \cref{capturing_OD_sets}, we do not have to show \cref{mouse_capturing} for \cref{mainthm}, as the failure of $\mathsf{MC}$ would have higher consistency strength than the large cardinal assumption of \cref{mainthm} (cf.\ \cite{Woodin_mouse_sets,St08}). Therefore, we omit the proof of \cref{mouse_capturing}.

    \section{Structure theory of the Solovay-type model}

    We summarize Trang--Woodin's work on the model $L(\mathbb{R}, \mu_{<\omega_1})$ in \cite{TrangThesis}. Although this section does not contain any new result, the stationary-tower-free proof of the derived model theorem for $L(\mathbb{R}, \mu_{<\omega_1})$ is due to us. We use this proof in the next section.

    \subsection{Derived model theorem}

    Recall the notation introduced in the beginning of \cref{DetSol_from_LG}.

    \begin{dfn}\label{derived_Solovay_model}
        Let $\lambda$ be a limit of Woodin cardinals such that the order type of all Woodin cardinals below $\lambda$ is $\lambda$. Let $G\subset\mathrm{Col}(\omega, {<}\lambda)$ be $V$-generic. In $V[G]$, we define $g\colon\lambda\to\power_{\omega_1}(\mathbb{R})$ by
        \[
        g(\eta) = \bigcup_{\xi < \eta}\mathbb{R}^{V[G\uphar\xi]}
        \]
        for all $\eta < \lambda$, where $G\uphar\xi = G\cap\mathrm{Col}(\omega, \xi)$. The \emph{derived $\Delta$-Solovay model at $\lambda$ in $V[G]$} is defined as
        \[
        L(\mathbb{R}^*_G, \mu_{<\lambda}^G) := L(\mathbb{R}_g, \mu_{<\lambda}^g).
        \]
        Also, for each $\alpha < \lambda$, we write $\mathcal{F}^G_\alpha = \mathcal{F}^g_\alpha$ and  $\mu^G_\alpha = \mu^g_\alpha$. Note that $\lambda = \omega_1^{V[G]}$.
    \end{dfn}

    The following theorem was claimed in \cite{TrangThesis}, but the proof below is due to us.

    \begin{thm}[Trang--Woodin \cite{TrangThesis}]\label{derived_model_theorem}
        Suppose that there is a limit of Woodin cardinals $\lambda$ such that the order type of Woodin cardinals below $\lambda$ is $\lambda$. Let $G\subset\mathrm{Col}(\omega, {<}\lambda)$ be $V$-generic. Then in $V[G]$, the following hold:
        \begin{enumerate}
            \item Let $x\in\mathbb{R}^*_G$ and let $\eta<\lambda$ be such that $x\in\mathbb{R}^{V[G\uphar\eta]}$. If
            \[
            \exists B\in\power(\mathbb{R}^*_G)\cap L(\mathbb{R}^*_G, \mu^G_{<\omega_1})\,(\mathrm{HC}^*_G, \in, B)\models\phi[x],
            \]
            then
            \[
            \exists B\in\mathrm{Hom}^{V[G\uphar\eta]}_{{<}\lambda}\,(\mathrm{HC}^{V[G\uphar\eta]}, \in, B)\models\phi[x].
            \]
            \item $L(\mathbb{R}^*_G, \mu^G_{<\lambda})\models\mathsf{AD}^+ \land \forall\alpha<\omega_1\,(\mu^G_{\alpha}\text{ is an ultrafilter on }[\power_{\omega_1}(\mathbb{R}^*_G)]^{\omega^\alpha})$.
        \end{enumerate}
    \end{thm}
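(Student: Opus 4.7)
The plan is to prove both parts simultaneously via the standard Suslin-capturing paradigm for derived model theorems, but replacing any use of the stationary tower by direct tree constructions from Woodin cardinals. Part (2) will follow from part (1): $\mathsf{AD}^+$ in $L(\mathbb{R}^*_G, \mu^G_{<\lambda})$ is a consequence of every set of reals there being $<\lambda$-homogeneously Suslin-captured in some intermediate extension (which gives determinacy and, via the tree representation, ordinal determinacy), and the ultrafilter property for each $\mu^G_\alpha$ on sets in the model follows from running the $\alpha$-Solovay game in the capturing extension and appealing to homogeneously Suslin determinacy.

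For part (1), I would proceed by induction on the $L$-construction of $L(\mathbb{R}^*_G, \mu^G_{<\omega_1})$. At each stage $\beta$ one maintains the invariant: for every $B \in L_\beta(\mathbb{R}^*_G, \mu^G_{<\omega_1}) \cap \power(\mathbb{R}^*_G)$ and every $\eta<\lambda$ with the parameters defining $B$ already in $V[G\uphar\eta]$, there is a $<\lambda$-homogeneously Suslin $\tilde{B}\in V[G\uphar\eta]$ with $\tilde{B}\cap V[G\uphar\eta]=B\cap V[G\uphar\eta]$. The base step and propagation through logical connectives, bounded quantification, and real quantification are handled exactly as in Woodin's derived model theorem, using that $V[G\uphar\eta]$ retains cofinally many Woodin cardinals below $\lambda$ and Martin--Steel's tree construction for homogeneously Suslin sets. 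The new ingredient is handling definability from $\mathcal{F}^G_{<\omega_1}$: a statement ``$Z\in\mathcal{F}^G_\alpha$'' for an already captured $Z$ is reformulated, via \cref{g-derived_Solovay}, as the tail condition that for all but finitely many $m<\omega$, $\langle g(f^m_\alpha(\xi))\mid\xi<\omega^\alpha\rangle\in Z$. Since each block $g(f^m_\alpha(\xi))$ arises from $V[G\uphar\xi]$ for $\xi$ cofinal in $\lambda$, the block structure combined with Martin--Steel applied to blocks of $\omega^{1+\alpha}$ Woodin cardinals above $\eta$ yields a $<\lambda$-homogeneously Suslin representation of the tail statement, in direct parallel to the block decomposition used for the diagonal game $G_\Delta$ in \cref{LGD_from_LC}.

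For part (2), $\mathsf{AD}^+$ is then automatic: every set of reals in $L(\mathbb{R}^*_G, \mu^G_{<\lambda})$ is $<\lambda$-homogeneously Suslin-captured in some $V[G\uphar\eta]$ by (1), hence determined in $V[G]$, and the ordinal determinacy clause of $\mathsf{AD}^+$ follows from the tree representation. For the ultrafilter property of $\mu^G_\alpha$: given $Z\subset[\power_{\omega_1}(\mathbb{R}^*_G)]^{\omega^\alpha}$ in $L(\mathbb{R}^*_G, \mu^G_{<\lambda})$, apply (1) to obtain a $<\lambda$-homogeneously Suslin $\tilde{Z}\in V[G\uphar\eta]$ capturing $Z$. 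Play the $\alpha$-Solovay game $G^{\mathrm{Sol}}_\alpha(\tilde{Z})$ in $V[G\uphar\eta]$: by homogeneously Suslin determinacy in the presence of cofinally many Woodin cardinals above $\eta$, one of the two players has a winning strategy, and standard manipulation of the canonical generic sequence $g$ shows that this strategy witnesses either $Z\in\mu^G_\alpha$ or its complement membership, via the tail formulation.

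The main obstacle is precisely the tree construction capturing tail-filter membership $Z\in\mu^G_\alpha$ for arbitrary countable $\alpha$ without invoking the stationary tower: a generic embedding from a stationary-tower quotient would directly verify such statements, and without it one must construct homogeneity systems by hand, iterating block-by-block through $\omega^{1+\alpha}$ Woodin cardinals while tracking the ``all but finitely many $m$'' quantifier. This step is analogous to, and can be carried out by the same kind of auxiliary-game bookkeeping used for $G_\Delta$ in \cref{LGD_from_LC}; once it is in place, the inductive propagation and the derivation of (2) are routine.
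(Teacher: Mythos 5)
There is a genuine gap, and it sits exactly at the two points your sketch treats as routine. First, your derivation of the ultrafilter property of $\mu^G_\alpha$ rests on playing the $\alpha$-Solovay game $G^{\mathrm{Sol}}_\alpha(\tilde{Z})$ in $V[G\uphar\eta]$ and invoking ``homogeneously Suslin determinacy.'' But $G^{\mathrm{Sol}}_\alpha$ is a game on \emph{reals} of length $\omega^{1+\alpha}$ (\cref{Sol_filters}); Martin--Steel homogeneity gives determinacy of length-$\omega$ games on integers, and no form of $\mathrm{Hom}_{<\lambda}$ determinacy available from the hypothesis of \cref{derived_model_theorem} yields determinacy of long games on reals with arbitrary payoff from the model --- indeed, ``$\mu_\alpha$ measures every set in the model'' is essentially equivalent to such determinacy, so the appeal is circular. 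Moreover, $\mu^G_\alpha$ is the \emph{tail} filter $\mathcal{F}^g_\alpha$ on blocks of the canonical generic sequence (\cref{derived_Solovay_model,g-derived_Solovay}), not the Solovay-game filter of an intermediate model, and connecting a strategy in $V[G\uphar\eta]$ to tail membership is itself part of what must be proved. The paper's route is different: by \cite[Lemma~2.2.6]{TrangThesis} one first reduces to the case where $L(\mathbb{R},\mu_{<\omega_1})$ of $V$ already satisfies the ultrafilter statement, and the ultrafilter property in the derived model is then obtained by an induction on levels (as in \cref{DSM_is_init_seg_of_SM}), not by determinacy of the Solovay game. Your proposal has no substitute for this reduction.

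Second, your ``new ingredient'' for part (1) --- capturing the tail condition ``$Z\in\mathcal{F}^G_\alpha$'' by ``Martin--Steel applied to blocks of $\omega^{1+\alpha}$ Woodin cardinals'' --- is not an argument: the tail statement quantifies over the generic sequence $g$, which lies in no $V[G\uphar\eta]$, and propagation of homogeneity through projections and real quantifiers (which is all Martin--Steel gives) does not touch it. The stationary-tower-free mechanism that actually does the work, following \cite{St07}, is: take a countable hull $\pi\colon P\to V_\theta$, run genericity iterations of $P$ absorbing the reals of a continuous chain of countable hulls $H_\xi$, prove that the derived $\Delta$-Solovay model of the iterate $P_\alpha$ is an $L$-initial segment of the $\Delta$-Solovay model of $M_\alpha$ (\cref{DSM_is_init_seg_of_SM}, which is where the ultrafilter reduction is used), and then apply Windszus' theorem that the set of $\pi$-realizable iteration trees on $P$ is homogeneously Suslin, so that ``some iterate's derived $\Delta$-Solovay model has a minimal $\phi$-witness satisfying $\psi[u,y]$'' defines a $\mathrm{Hom}_{<\lambda}$ set equal to the least-witness set $B_0$. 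Your auxiliary-game analogy with \cref{LGD_from_LC} does not supply this (that argument derives determinacy of $G_\Delta$ from a mouse and is unrelated to producing homogeneity systems for filter membership), so the central capturing step of part (1), and with it part (2), remains unproved in your outline.
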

    \begin{proof}
        By \cite[Lemma 2.2.6]{TrangThesis}, we may assume that
        \begin{equation}\label{consequence_of_Radin_forcing}
            L(\mathbb{R}, \mu_{<\omega_1})\models\forall\alpha<\omega_1\,(\mu_\alpha\text{ is an ultrafilter on }[\power_{\omega_1}(\mathbb{R})]^{\omega^\alpha}).
        \end{equation}
        Let $\pi\colon P\to V_{\theta}$ be elementary, where $P$ is countable and transitive, $\theta$ is sufficiently large, and $\lambda\in\ran(\pi)$. Let $\overline{\lambda}=\pi^{-1}(\lambda)$. Then $P$ is sufficiently iterable for Neeman's genericity iteration. Fix a continuous $\in$-chain $\langle H_{\xi}\mid\xi<\omega_1\rangle$ of countable elementary submodels of $V_{\theta}$ such that $P\in H_0$. For each $\xi<\omega_1$, let
        \[
        \mathbb{R}_\xi = \mathbb{R} \cap H_\xi,
        \]
        let $M_{\xi}$ be the transitive collapse of $H_{\xi}$, and let $\pi_{\xi}\colon M_{\xi}\to V_{\theta}$ be the uncollapse map. Now, we form a sequence of genericity iterates $\langle P_{\xi}\mid\xi\leq\alpha\rangle$ as follows:
        \begin{itemize}
            \item $P_0 = P$.
            \item If $\xi<\alpha$, then let $P_{\xi+1}$ be an $\mathbb{R}_{-1+\xi+1}$-genericity iterate of $P_{\xi}$ using the image of the Woodin cardinals $\delta_{\xi+n}$ for $n<\omega$ so that every initial segment of this iteration belongs to $M_\xi$. Note that the entire iteration only belongs to $M_{\xi +1}$.
            \item If $\xi\leq\alpha$ is limit, then let $P_{\xi}$ be the direct limit of $\langle P_{\eta}\mid\eta<\xi\rangle$ along the iteration maps.
            \item If $\xi$ is the image of $\overline{\lambda}$ under the iteration embedding from $P_0$ to $P_\xi$, we stop the construction and let $\alpha = \xi$.
        \end{itemize}
        Each $P_\xi$ can be realized back to $V_\theta$ and thus it is wellfounded. As usual, we arrange that there is a $P_{\alpha}$-generic $H\subset\mathrm{Col}(\omega, {<}\alpha)$ such that 
        \[
        \langle\mathbb{R}^*_{H\uphar\delta_{\xi}}\mid\xi<\alpha\rangle = \langle\mathbb{R}_\xi\mid\xi<\alpha\rangle
        \]
        and thus $\mathbb{R}^*_H = \mathbb{R}_\alpha$.

        \begin{claim}\label{DSM_is_init_seg_of_SM}
            The derived $\Delta$-Solovay model of $P_{\alpha}$ at $\alpha$ in $P_{\alpha}[h]$ is an $L$-initial segment of the $\Delta$-Solovay model in $M_{\alpha}$, i.e.,
            \[
            L(\mathbb{R}^*_H, \mu_{<\alpha}^H)^{P_\alpha[H]} = L_{P_\alpha\cap\mathrm{Ord}}(\mathbb{R}, \mu_{<\omega_1})^{M_\alpha}.
            \]
        \end{claim}
        \begin{proof}
            Note that $\alpha = \omega_1^{M_\alpha}$. Let $\langle\overline{\mathcal{F}}_\beta\mid\beta<\alpha\rangle= \pi^{-1}_\alpha(\langle\mathcal{F}_\beta\mid\beta<\omega_1\rangle)$ and we write $\overline{\mu}_\beta = \overline{\mathcal{F}}_\beta\cap L(\mathbb{R}, \mu_{<\omega_1})^{M_\alpha}$ for $\beta<\alpha$. Using this notation,
            \[
            L(\mathbb{R}, \mu_{<\omega_1})^{M_\alpha} = L_{M_\alpha\cap\ord}(\mathbb{R}_\alpha, \overline{\mu}_{<\alpha}).
            \]
            
            Now we claim that for all $\beta < \alpha$, $\overline{\mathcal{F}}_{\beta}\subset\mathcal{F}^H_{\beta}$.
            To see this, fix $\beta<\alpha$ and $Z\in\overline{\mathcal{F}}_\beta$ witnessed by $F\in M_\alpha$. Then $\mathbb{R}^*_{H\upharpoonright\delta_\xi} = \mathbb{R}_\xi$ is closed under $F$ for all sufficiently large $\xi<\alpha$, so $Z\in\mathcal{F}^H_\beta$.

            By (\ref{consequence_of_Radin_forcing}), $\overline{\mu}_{\beta}$ is an ultrafilter in $L_{M_\alpha\cap\ord}(\mathbb{R}_\alpha, \overline{\mu}_{<\alpha})$. Then by induction on $\gamma\in P_\alpha\cap\ord$, we have
            \[
            \mathcal{F}^H_\beta\cap L_{\gamma}(\mathbb{R}^*_H, \mu^H_{<\alpha}) = \overline{\mathcal{F}}_\beta\cap L_{\gamma}(\mathbb{R}_\alpha, \overline{\mu}_{<\alpha})
            \]
            for all $\beta < \alpha$ and thus
            \[
            L_{\gamma}(\mathbb{R}^*_H, \mu^H_{<\alpha}) = L_{\gamma}(\mathbb{R}_\alpha, \overline{\mu}_{<\alpha}),
            \]
            which completes the proof of the claim.
        \end{proof}
        
        Now we show (1). For simplicity, we assume $x=\emptyset$ and $\eta=0$. By \cref{DSM_is_init_seg_of_SM}, the assumption of (1) implies that for some $B\in L(\mathbb{R}, \mu_{<\omega_1})$, $(\mathrm{HC}, \in, B)\models\phi$. We call such a set $B$ a \emph{$\phi$-witness}. Let $\beta_0$ be the least ordinal $\beta$ such that there is a $\phi$-witness in $ L_\beta(\mathbb{R}, \mu_{<\omega_1})$. Furthermore, by minimizing ordinal parameters, we can find a formula $\psi(u, v)$ and a real $y\in\mathbb{R}$ such that $(\mathrm{HC}, \in, B_0)\models\phi$, where
        \[
        B_0 = \{u\in\mathbb{R}\mid \langle L_{\beta_0}(\mathbb{R}, \mu_{<\omega_1}), \in, \mu_{<\omega_1}\rangle\models\psi[u, y]\}.
        \]
        Let $\lambda'<\lambda$ be large enough so that any weakly $\lambda'$-homogeneous set of reals is in $\mathrm{Hom}_{<\lambda}$. Also, let $\rho$ be the least Woodin cardinal above $\lambda'$. Let $W$ be the set of (real codes of) $2^\omega$-closed\footnote{An iteration tree $\mathcal{T}$ is \emph{$2^\omega$-closed} if for all $\alpha+1<\lh(\mathcal{T})$,  $M^{\mathcal{T}}_\alpha\models$``$\ult(V, E^{\mathcal{T}}_\alpha)$ is closed under $2^\omega$-sequences.''} iteration trees $\mathcal{T}$ on $P$ of length $\omega+1$ above $\lambda'$\footnote{An iteration tree $\mathcal{T}$ is \emph{above $\lambda'$} if $\crit(E^\mathcal{T}_\alpha)>\lambda'$ for all $\alpha+1<\lh(\mathcal{T})$.} such that $\pi\mathcal{T}$ is a (wellfounded) iteration tree. Windszus' theorem (see \cite[Lemma 1.1]{St07}) asserts that $W$ is $\pi(\lambda')$-homogeneously Suslin. Then the following claim implies that $B$ is $\mathrm{Hom}_{{<}\lambda}$, which completes the proof of (1).
    
        \begin{claim}
            For any $u\in\mathbb{R}$, the following are equivalent:
            \begin{itemize}
                \item $u\in B_0$.
                \item There are an iteration tree $\mathcal{T}$ on $P$ in $W$ and $g\subset\mathrm{Col}(\omega, i^{\mathcal{T}}_{0, \omega}(\rho))$ generic over $M^{\mathcal{T}}_{\omega}$ such that $u, y\in M^{\mathcal{T}}_{\omega}[g]$, some initial segment of the derived $\Delta$-Solovay model of $M^{\mathcal{T}}_{\omega}[g]$ at $i^{\mathcal{T}}_{0, \omega}(\overline{\lambda})$ has a $\phi$-witness, and the least such initial segment satisfies $\psi[u, y]$.
            \end{itemize}
        \end{claim}
        \begin{proof}
            This is proved in the same way as \cite[Claim 2]{St07}.
            Let $u\in\mathbb{R}$. By Neeman's genericity iteration, we can take $\mathcal{T}\in W$ and $g\subset\mathrm{Col}(\omega, i^{\mathcal{T}}_{0, \omega}(\rho))$ such that $u, y\in M^{\mathcal{T}}_{\omega}[g]$. As in the proof of the last claim, we iterate $M^{\mathcal{T}}_{\omega}$ to $N$ so that the derived $\Delta$-Solovay model of $N$ is an initial segment of the $\Delta$-Solovay model of some elementary submodel of $V$. Also, there is a realization map from $N$ to $V_\theta$. By elementarity of this realization map, the assumption of (1) implies that the derived $\Delta$-Solovay model of $N$ has a $\phi$-witness. Then $u\in B_0$ if and only if the least initial segment of the derived $\Delta$-Solovay model of $N$ containing a $\phi$-witness satisfies $\psi[u, y]$. By elementarity of the iteration embedding from $M_\omega^\mathcal{T}$ to $N$, the claim follows.
        \end{proof}
    
        (2) follows from (1) and its proof. For the proof of $\mathsf{AD}^+$, see the discussion after Lemma 6.4 in \cite{St09} or \cite[Lemma 2.4]{Tr15}. This completes the proof of \cref{derived_model_theorem}.
    \end{proof}

    \subsection{Prikry-like forcings}\label{subsection:Prikry-like_forcing}

    Throughout this subsection, we assume that
    \begin{equation}\tag{DetSol}\label{global_assump_for_Prikry_forcing}
        L(\mathbb{R}, \mu_{<\omega_1})\models\mathsf{AD} + \forall\alpha<\omega_1\,(\mu_{\alpha}\text{ is an ultrafilter on }[\power_{\omega_1}(\mathbb{R})]^{\omega^\alpha})
    \end{equation}
    and work in $L(\mathbb{R}, \mu_{<\omega_1})$. Note that by \cref{mouse_capturing}, (\ref{global_assump_for_Prikry_forcing}) implies $\mathsf{AD}^+ + \mathsf{MC}$ in $L(\mathbb{R}, \mu_{<\omega_1})$. We introduce a Prikry-like forcing $\mathbb{P}_{\Delta}$, which is a slight modification of the poset $\mathbb{P}_{\omega_1}$ in \cite{TrangThesis}. Such a modification seems necessary to guarantee the Mathias property for $\mathbb{P}_{\Delta}$. However, all the results in this subsection should be attributed to Trang and Woodin.

    For any real $x\in\mathbb{R}$, we let
    \[
    d_x = \{y\in\mathbb{R}\mid\mathrm{HOD}_x = \mathrm{HOD}_y\}.
    \]
    This is called the \emph{$\Sigma^2_1$-degree} of $x$. Let $\mathbb{D}$ be the set of all $\Sigma^2_1$-degrees. The $\Sigma^2_1$ degrees are ordered by $d_x\leq d_y \iff x\in \mathrm{HOD}_y$. For each $\alpha<\omega_1$, let $\mathbb{D}_{\alpha}$ be the set of increasing $\omega^{1+\alpha}$-sequences of $\Sigma^2_1$-degrees. The Prikry-like forcing $\mathbb{P}_{\Delta}$ will add an increasing $\omega_1^V$-sequence of $\Sigma^2_1$-degrees. We will see in \cref{mouse_from_DetSol} that this yields a premouse with a limit of Woodin cardinals $\lambda$ such that the order type of Woodin cardinals below $\lambda$ is $\lambda$. To define $\mathbb{P}_{\Delta}$, we need a sequence $\langle\nu_{\alpha}\mid\alpha<\omega_1\rangle$ of ultrafilters on $\mathbb{D}_{\alpha}$. To define these ultrafilters, we simultaneously define Prikry-like forcings $\mathbb{P}_\alpha$ associated to $\langle\nu_\beta\mid\beta<\alpha\rangle$ by induction. The definitions below are slight modifications of definitions in \cite{TrangThesis}.

    First, note that $\mathsf{AD}$ implies that the cone filter $\nu$ on $\mathbb{D}$ defined by
    \[
    A\in\nu \iff \exists d\in\mathbb{D}\,\forall e\geq d\,(e\in A)
    \]
    is an ultrafilter. We define the (tree) Prikry-like forcing $\mathbb{P}_{\Sigma^2_1}$ associated to $\nu$ as in \cite[Section 4]{Ket11}: Let $T$ be a tree on $\omega\times\ord$ projecting to a universal $\Sigma^2_1$ set. The conditions of $\mathbb{P}_{\Sigma^2_1}$ are the pairs $\langle p, U\rangle$ such that
    \begin{itemize}
        \item $p=\langle d_i\mid i<n\rangle\in\mathbb{D}^{<\omega}$ such that for all $i$ with $i+1<n$, $d_i\in L[T, d_{i+1}(0)]$ and $d_i$ is countable in $L[T, d_{i+1}(0)]$, and
        \item $U\colon\mathbb{D}^{<\omega}\to\nu$.
    \end{itemize}
    The order of $\mathbb{P}_{\Sigma^2_1}$ is defined by $\langle q, W\rangle\leq\langle p, U\rangle$ iff
    \begin{enumerate}
        \item $q\supseteq p$,
        \item for all $i\in\dom(q)\setminus\dom(p), q(i)\in U(p\uphar i)$,
        \item $W(r)\subset U(r)$ for all $r\in\mathbb{D}^{<\omega}$.
    \end{enumerate}
    For any generic $G\subset\mathbb{P}_{\Sigma^2_1}$, we write
    \[
    \vec{d}(G) = \bigcup\{p\mid \exists U\,(\langle p, U\rangle\in G)\}.
    \]
    We define an ultrafilter $\nu_0$ on $\mathbb{D}_0$ as follows. For all $A\subset\mathbb{D}_0$, we set $A\in\nu_0$ if and only if for any $\infty$-Borel code $S$ for $A$, for $\mu_0$-almost all $\sigma\in\power_{\omega_1}(\mathbb{R})$,
    \[
    L(\sigma)[T, S]\models\text{``}\exists\langle\emptyset, U\rangle\in\mathbb{P}_{\Sigma^2_1}\,(\langle\emptyset, U\rangle\Vdash_{\mathbb{P}_{\Sigma^2_1}}\vec{d}(\dot{G})\in A_S),\text{''}
    \]
    where $\dot{G}$ is the canonical name for a generic filter on $\mathbb{P}_{\Sigma^2_1}$, and $A_S\subset\mathbb{D}_0$ is the generic interpretation of $S$.\footnote{See e.g.\ \cite{Ket11} for basic facts on the notion of $\infty$-Borel codes.} To check that $\nu_0$ is well-defined, one needs to show that for $\mu_0$-almost all $\sigma$, $L(\sigma)[T, S]\models$``$\mathsf{AD}^+\land\mathbb{R} = \sigma$'' and that whether $A\in\nu_0$ does not depend on the choice of $\infty$-Borel code for $A$. See \cite{TrangThesis} or \cite{Tr15} for the proof.
    
    Assume now inductively that $\nu_{\alpha}$ has been defined. Then we define $\mathbb{P}_{\alpha}$ as the Prikry-like forcing associated with $\nu_\alpha$, in the same way that we defined $\mathbb{P}_{\Sigma^2_1}$ from $\nu$. Moreover, for any $g = \langle\sigma_\xi\mid\xi < \omega^{\alpha+1}\rangle \in [\power_{\omega_1}(\mathbb{R})]^{\omega^{\alpha+1}}$, let $\mathbb{R}_g = \ran(g)$ and let $\mathcal{F}^g_\alpha$ be the set of all $Z\subset[\power_{\omega_1}(\mathbb{R}_g)]^{\omega^\alpha}$ such that for all but finitely many $m < \omega$,
    \[
    \langle g(\omega^\alpha\cdot m + \xi)\mid\xi < \omega^\alpha\rangle\in Z.
    \]
    Using this notation, we define an ultrafilter $\nu_{\alpha+1}$ on $\mathbb{D}_{\alpha+1}$ by letting $A\in\nu_{\alpha+1}$ if and only if for any $\infty$-Borel code $S$ for $A$, for $\mu_{\alpha+1}$-almost all $g\in[\power_{\omega_1}(\mathbb{R})]^{\omega^{\alpha+1}}$,
    \[
    \mathrm{HOD}_{\mathbb{R}_g\cup\{\mathcal{F}_{\alpha}^g\}}\models\text{``}\exists\langle\emptyset, U\rangle\in\mathbb{P}_{\alpha}^g\,(\langle\emptyset, U\rangle\Vdash_{\mathbb{P}_{\alpha}^g}\vec{d}(\dot{G})\in A_S),\text{''}
    \]
    where $\mathbb{P}_{\alpha}^g$ is the Prikry-like forcing associated with (the restriction of) $\mathcal{F}_\alpha^g$ defined in $\mathrm{HOD}_{\mathbb{R}_g\cup\{\mathcal{F}_{\alpha}^g\}}$ in the same way that $\mathbb{P}_\alpha$ is associated with $\mu_\alpha$. To see the well-definedness of $\nu_{\alpha+1}$, one needs to show that for $\mu_{\alpha+1}$-almost all $g$, $\mathrm{HOD}_{\mathbb{R}_g\cup\{\mathcal{F}_{\alpha}^g\}}$ satisfies $\mathsf{AD}^+, \mathbb{R} = \mathbb{R}_g$, and  that (the restriction of) $\mathcal{F}_\alpha^g$ is an ultrafilter. See \cite{TrangThesis} for the proof of the case where $\alpha = 0$. The proof of the general case is similar.

    For the limit step of the induction, assume that $\alpha$ is limit and $\langle\nu_{\beta}\mid\beta<\alpha\rangle$ has been defined. To define $\mathbb{P}_\alpha$ and $\nu_\alpha$, we temporarily fix cofinal increasing functions $f_\beta\colon\omega\to\beta$ for $\beta\leq\alpha$. We say that a finite sequence $\langle d_i\mid i<n\rangle$ is called a \emph{$\mathbb{D}$-sequence relative to $f_\beta$} if $d_i\in\mathbb{D}_{f_\alpha(i)}$ for all $i<n$. The conditions of $\mathbb{P}_{\alpha}$ are the pairs $\langle p, U\rangle$ such that 
    \begin{itemize}
        \item $p=\langle d_i\mid i<n\rangle$ is a $\mathbb{D}$-sequence relative to $f_\alpha$ such that for all $i$ with $i+1<n$, $d_i \in L[T, d_{i+1}(0)]$ and $d_i$ is countable in $L[T, d_{i+1}(0)]$.
        \item $U$ is a function defined on all $\mathbb{D}$-sequences relative to $f_\alpha$ such that for all $p\in\dom(U), U(p)\in\nu_{f_\alpha(\lvert p\rvert)}$.
    \end{itemize}
    The order of $\mathbb{P}_\alpha$ is defined by $\langle q, W\rangle\leq\langle p, U\rangle$ iff
    \begin{enumerate}
        \item $q\supseteq p$,
        \item for all $i\in\dom(q)\setminus\dom(p), q(i)\in U(p\uphar i)$,
        \item $W(r)\subset U(r)$ for all $r\in\dom(U)=\dom(W)$.
    \end{enumerate}
    For any $g = \langle\sigma_\beta\mid\beta < \omega^\alpha\rangle \in [\power_{\omega_1}(\mathbb{R})]^{\omega^\alpha}$, let $\mathbb{R}_g = \ran(g)$ and for $\beta<\alpha$, let $\mathcal{F}^g_\beta$ be the set of all $Z\subset[\power_{\omega_1}(\mathbb{R}_g)]^{\omega^\beta}$ such that for all but finitely many $m < \omega$,
    \[
    \langle g(f_\beta(m)+\xi))\mid\xi < \omega^\beta\rangle \in Z.
    \]
    We define an ultrafilter $\nu_\alpha$ on $\mathbb{D}_\alpha$ by letting $A\in\nu_{\alpha}$ if and only if for any $\infty$-Borel code $S$ for $A$, for $\mu_{\alpha}$-almost all $g\in[\power_{\omega_1}(\mathbb{R})]^{\omega^\alpha}$,
    \[
    \mathrm{HOD}_{\mathbb{R}_g\cup\{\mathcal{F}_{\beta}^g\mid\beta<\alpha\}}\models\text{``}\mathsf{AD}^+ + \mathbb{R}_g = \mathbb{R} + \exists\langle\emptyset, U\rangle\in\mathbb{P}_\alpha^g\,(\langle\emptyset, U\rangle\Vdash_{\mathbb{P}_{\alpha}^g}\vec{d}(\dot{G})\in A_S),\text{''}
    \]
    where $\mathbb{P}_\alpha^g$ is the Prikry-like forcing associated with (the restrictions of) $\langle\mathcal{F}^g_\beta\mid\beta<\alpha\rangle$ defined in $\mathrm{HOD}_{\mathbb{R}_g\cup\{\mathcal{F}_{\beta}^g\mid\beta<\alpha\}}$ in the same way that $\mathbb{P}_\alpha$ is associated with $\langle\nu_\beta\mid\beta<\alpha\rangle$. One can show that $\nu_\alpha$ is independent of the choice of cofinal functions $f_\beta\colon\omega\to\beta$, where $\beta\leq\alpha$. See the proof of \cite[Theorem 1.2(2)]{Tr15}.

    We are finally ready to introduce the main Prikry-like forcing $\mathbb{P}_\Delta$.

    \begin{dfn}
        A finite sequence $\langle d_i\mid i<n\rangle$ is called a \emph{$\mathbb{D}$-sequence} if there is an increasing sequence $\langle\alpha_i\mid i<n\rangle$ of countable ordinals such that $d_i\in\mathbb{D}_{\alpha_i}$ for all $i<n$. The conditions of $\mathbb{P}_{\Delta}$ are the pairs $\langle p, U\rangle$ such that 
    \begin{itemize}
        \item $p=\langle d_i\mid i<n\rangle$ is a $\mathbb{D}$-sequence such that for all $i$ with $i+1<n$, $d_i$  is in $L[T, d_{i+1}(0)]$ and is countable there.
        \item $U$ is a function defined on all $\mathbb{D}$-sequences such that
        \begin{enumerate}
            \item for all $p\in\dom(U), U(p)\subset\bigcup_{\alpha<\omega_1}\mathbb{D}_{\alpha}$, and
            \item for club many $\alpha<\omega_1$, $U(p)\cap\mathbb{D}_{\alpha}\in\nu_{\alpha}$.
        \end{enumerate}
    \end{itemize}
    The order of $\mathbb{P}_{\Delta}$ is defined by $\langle q, W\rangle\leq\langle p, U\rangle$ iff
    \begin{enumerate}
        \item $q\supseteq p$,
        \item for all $i\in\dom(q)\setminus\dom(p)$, $q(i)\in U(p\upharpoonright i)$,
        \item $W(r)\subset U(r)$ for all $\mathbb{D}$-sequences $r$.
    \end{enumerate}
    For any generic $G\subset\mathbb{P}_{\Delta}$, we define
    \[
    \vec{d}(G) = \bigcup\{p\mid \exists U\,(\langle p, U\rangle\in G)\}.
    \]
    \end{dfn}

    Recall that for any ultrafilter $\mathcal{U}$ on $\kappa$ and a sequence $\langle\mathcal{V}_\alpha\mid\alpha<\kappa\rangle$, where each $\mathcal{V}_\alpha$ is an ultrafilter on some set $X_\alpha$, the Fubini sum of $\langle\mathcal{V}_\alpha\mid\alpha<\kappa\rangle$ over $\mathcal{U}$, denoted by $\sum_\mathcal{U}\mathcal{V}_\alpha$, is an ultrafilter on $\bigcup_{\alpha<\kappa}(\{\alpha\}\times X_\alpha)$ defined by
    \[
    X\in\sum_{\mathcal{U}}\mathcal{V}_\alpha \iff \{\alpha<\kappa\mid\{A\in X_\alpha\mid\langle\alpha, A\rangle\in X\}\in\mathcal{V}_\alpha\}\in\mathcal{U}.
    \]
    One can see $\mathbb{P}_\Delta$ as the Prikry forcing associated to the ultrafilter $\Sigma_{\mathcal{C}}\,\nu_{\alpha}$ on $\bigcup_{\alpha<\omega_1}\mathbb{D}_\alpha\simeq\bigcup_{\alpha<\omega_1}(\{\alpha\}\times\mathbb{D}_\alpha)$, where $\mathcal{C}$ is the club filter on $\omega_1$. Note that by our assumption (\ref{global_assump_for_Prikry_forcing}), $\mathcal{C}$ and the $\nu_\alpha$'s are all ultrafilters, and thus $\Sigma_\mathcal{C}\,\nu_\alpha$ is also an ultrafilter.

    A simple genericity argument shows that, for any generic $G\subset\mathbb{P}_\Delta$, if $\vec{d}(G)(i)\in\mathbb{D}_{\alpha_i}$ for all $i<\omega$, then $\sup_{i<\omega}\alpha_i = \omega_1^V$. One can find the proof of the following lemma in \cite[Theorem 4.1]{Ket11}. The essentially same argument can be found in \cite[Lemma 6.18]{KW10} and \cite[Claim 6.38]{StW16}, too.

    \begin{lem}
        Under the assumption (\ref{global_assump_for_Prikry_forcing}), the following holds in $L(\mathbb{R}, \mu_{<\omega_1})$: Let $\langle p, U\rangle\in\mathbb{P}_{\Delta}$ and let $\phi$ be a sentence of the forcing language. Then there is a $\langle p, W\rangle\leq\langle p, U\rangle$ such that
        \begin{enumerate}
            \item $\langle p, W\rangle$ decides $\phi$, and
            \item $\langle p, W\rangle$ is ordinal definable from $\langle p, U\rangle$ and $\phi$.
        \end{enumerate}
    \end{lem}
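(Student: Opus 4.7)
The plan is to establish the Prikry property by a standard fusion/diagonal-intersection argument, adapting the proofs in \cite{Ket11, KW10, StW16} to the ultrafilter structure of $\mathbb{P}_\Delta$, and then read off the ordinal definability from the canonicity of the construction. Throughout, I use that $\Sigma_{\mathcal{C}}\,\nu_\alpha$ is an ultrafilter on $\bigcup_{\alpha<\omega_1}\mathbb{D}_\alpha$ (by assumption \eqref{global_assump_for_Prikry_forcing} together with the ultrafilter property of each $\nu_\alpha$), and that $L(\mathbb{R},\mu_{<\omega_1})$ satisfies $\mathsf{AD}^+ + \mathsf{MC}$ so that we have access to a fixed $\mathrm{OD}$ wellorder of every set used below.

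First, I would define, for each finite $\mathbb{D}$-sequence $r$ extending $p$, a trichotomy on $\mathbb{D}$-sequences $d$ with $d\in U(r)$: put $d$ into $A^+_r$ (resp.\ $A^-_r$) if there is a measure-one function $W'$, refining $U$ above $r^{\frown}\langle d\rangle$, such that $\langle r^{\frown}\langle d\rangle, W'\rangle\Vdash\phi$ (resp.\ $\Vdash\neg\phi$), and put it into $A^0_r$ otherwise. Since $A^+_r\cup A^-_r\cup A^0_r$ partitions a $\Sigma_{\mathcal{C}}\,\nu_\alpha$-large set, exactly one of them is large; call its restriction to each $\mathbb{D}_\alpha$ the set $W_0(r)\subset U(r)$, chosen canonically as the $\mathrm{OD}$-least such witness. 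Simultaneously, for every $d\in W_0(r)\cap A^\pm_r$ I fix (again, the $\mathrm{OD}$-least) refinement $W^{r,d}$ of $U$ above $r^{\frown}\langle d\rangle$ deciding $\phi$.

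Next, I would fuse these data into a single measure function $W$ on all $\mathbb{D}$-sequences by a diagonal intersection: for a sequence $s$ extending $p$, set $W(s) = W_0(s)\cap\bigcap\{W^{r,d}(s)\mid r^{\frown}\langle d\rangle\init s\}$, intersected with the club-diagonal $\{\alpha<\omega_1\mid W_0(r)\cap\mathbb{D}_\alpha\in\nu_\alpha\text{ for all }r\text{ with }r\init s \text{ and sufficiently large indices}\}$. The standard verification shows that the resulting $W$ meets clauses (1) and (2) in the definition of a condition, because each intersection is over an $L(\mathbb{R},\mu_{<\omega_1})$-set and each $\nu_\alpha$ together with $\mathcal{C}$ is countably complete once we restrict to $\mathrm{OD}$-least witnesses (this is where the argument of \cite[Lemma 6.18]{KW10} and \cite[Claim 6.38]{StW16} is used verbatim, replacing their ultrafilter by $\Sigma_{\mathcal{C}}\,\nu_\alpha$).

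Finally, a density argument rules out the ``undecided'' outcome: if for some initial $r$ the large set of extensions were $A^0_r$, one could extend below $\langle p,W\rangle$ to a condition whose stem forces $\phi$ (or $\neg\phi$), contradicting $A^0_r$-membership. Hence $\langle p,W\rangle$ decides $\phi$. Ordinal definability from $\langle p,U\rangle$ and $\phi$ is automatic: every choice in the construction was taken to be the $\mathrm{OD}$-least witness in the fixed $\mathrm{OD}$ wellorder, and the recursion references only the parameters $p,U,\phi$. The main obstacle I anticipate is verifying the countable-completeness/diagonal step with the Fubini sum $\Sigma_{\mathcal{C}}\,\nu_\alpha$ — one needs to check that the $\mathrm{OD}$-least choice of $W^{r,d}$ lets the intersection over all finite $r$ stay large, which is where the $\infty$-Borel coding of $\nu_\alpha$ and the Mathias-style argument in \cite{Ket11} must be adapted to handle that the measure $\nu_\alpha$ varies with $\alpha$.
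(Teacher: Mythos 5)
The crux of your argument is exactly the step that cannot be carried out in $L(\mathbb{R}, \mu_{<\omega_1})$: ``for every $d\in W_0(r)\cap A^\pm_r$ I fix (again, the $\mathrm{OD}$-least) refinement $W^{r,d}$.'' The model satisfies $\mathsf{AD}$, so there is no $\mathrm{OD}$ wellorder of its sets of reals, and the candidate witnesses $W'$ are functions whose values are sets of sequences of $\Sigma^2_1$-degrees, i.e.\ objects built from sets of reals; neither the individual witnesses nor the family of all witnesses need be wellorderable, and $\mathsf{AD}^+ + \mathsf{MC}$ certainly does not supply ``a fixed $\mathrm{OD}$ wellorder of every set used below'' (that would contradict $\mathsf{AD}$). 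Restricting to witnesses that are $\mathrm{OD}$ from the parameters does not repair this, since you would then have to prove that an $\mathrm{OD}$ witness exists whenever any witness does, which is not addressed. So the fusion as you set it up — the classical $\mathsf{AC}$-style Prikry argument — does not go through; this selection problem is precisely the point where the choiceless proofs cited in the paper (Ketchersid \cite[Theorem 4.1]{Ket11}, Koellner--Woodin \cite[Lemma 6.18]{KW10}, Steel--Woodin \cite[Claim 6.38]{StW16}) diverge from the textbook argument.

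The standard repair is to replace chosen witnesses by \emph{canonical} ones definable from $\langle p, U\rangle$ and $\phi$ alone. Two observations do the work: (i) two conditions with the same stem are compatible (intersect the measure functions), so for a fixed stem at most one of $\phi, \neg\phi$ can be forced by some refinement — call the stem positive, negative, or undecided accordingly; (ii) if a stem $s$ is positive, witnessed by some $\langle s, W''\rangle\Vdash\phi$, then every $e\in W''(s)$ gives a positive stem $s^{\frown}\langle e\rangle$, so the set of such $e$ is large, and hence the single function $W_s(t)=\{e\in U(t)\mid\text{the corresponding stem extension is positive}\}$ is itself a legitimate measure-one part, $\mathrm{OD}$ from $p, U, \phi$; by (i) no extension of $\langle s, W_s\rangle$ can force $\neg\phi$, so $\langle s, W_s\rangle\Vdash\phi$ without ever selecting a particular $W''$. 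Combining this with your three-way partition (using, as you note, that the club filter on $\omega_1$ and each $\nu_\alpha$ are ultrafilters, so the Fubini sum is one) and a minimal-stem-length argument yields the deciding condition, and ordinal definability is then genuinely automatic because every object in the construction is defined outright from $p, U, \phi$ and the $\mathrm{OD}$ sequence $\langle\nu_\alpha\mid\alpha<\omega_1\rangle$ rather than chosen. As written, your proposal has a genuine gap at the witness-selection step; note also that the paper itself does not prove the lemma but cites the references above, whose arguments follow the canonical-witness route just described.
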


    Let $U$ be such that $\langle\emptyset, U\rangle\in\mathbb{P}_{\Delta}$. We say that $U$ is \emph{uniform}\footnote{This terminology comes from the proof of Lemma 6.40 in \cite{StW16}.} if whenever $q$ is a subsequence of $p$, $U(p)\subset U(q)$. Note that for any $\langle\emptyset, U\rangle$, one can find a uniform $W$ such that $\langle\emptyset, W\rangle\leq\langle\emptyset, U\rangle$ in a definable way. The following lemma is proved in \cite[Theorem 4.2]{Ket11} and essentially in \cite[Lemma 6.40]{StW16}, too.

    \begin{lem}[Mathias property]\label{Mathias_condition}
        Assume (\ref{global_assump_for_Prikry_forcing}). If a sequence $\vec{d}=\langle d_i\mid i<\omega\rangle$ satisfies that for all uniform $U$, $\exists i\,\forall j\geq i\, (d_j\in U(\vec{d}\uphar j))$, then $\vec{d}$ is $\mathbb{P}_\Delta$-generic over $L(\mathbb{R}, \mu_{<\omega_1})$.
    \end{lem}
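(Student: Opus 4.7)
The plan is a standard Mathias-style argument that combines the Prikry property of the preceding lemma with the fact that each $\nu_\alpha$ is a countably complete ultrafilter (inherited from $\mathsf{AD}$ and the $\omega_1$-completeness of $\mu_\alpha$). Fix a dense open set $D\in L(\mathbb{R},\mu_{<\omega_1})$; it suffices to produce a single uniform function $U^D$ such that the Mathias condition on $\vec{d}$ relative to $U^D$ forces the filter $G_{\vec{d}}$ generated by $\vec{d}$ to meet $D$. For each finite $\mathbb{D}$-sequence $p$, apply the preceding Prikry-property lemma (to the trivial base condition whose second coordinate gives the entire space at every stem) and the sentence $\varphi_p$ asserting that the generic filter contains an extension of $\check{p}$ lying in $\check{D}$. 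This yields, ordinal-definably from $p$ and $D$, a function $W^D_p$ such that $\langle p,W^D_p\rangle$ decides $\varphi_p$; since $D$ is dense, the decision is positive, and hence $\langle p,W^D_p\rangle\Vdash \check{G}\cap\check{D}\neq\emptyset$. By the remark just before the lemma, we may further replace $W^D_p$ by a uniform $U^D_p$ with $\langle p,U^D_p\rangle\leq\langle p,W^D_p\rangle$, still ordinal definable from $p$ and $D$.

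Next, amalgamate these pointwise by defining
\[
U^D(p) \;=\; \bigcap\bigl\{\, U^D_q(p) \;:\; q \text{ is a subsequence of } p\,\bigr\},
\]
a finite intersection since $p$ is finite. Both defining clauses of a legitimate second coordinate for $\mathbb{P}_\Delta$ — values in $\bigcup_{\alpha<\omega_1}\mathbb{D}_\alpha$, and $\nu_\alpha$-measure-one $\mathbb{D}_\alpha$-trace for club-many $\alpha$ — are preserved under finite intersection, because each $\nu_\alpha$ is a filter and a finite intersection of clubs in $\omega_1$ is club. Uniformity of $U^D$ is immediate: if $q\subset p$ then every subsequence of $q$ is a subsequence of $p$, so $U^D(p)\subset U^D(q)$. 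Apply the hypothesis on $\vec{d}$ to this uniform $U^D$: there is some $i$ with $d_j\in U^D(\vec{d}\upharpoonright j)$ for all $j\geq i$, and a fortiori $d_j\in U^D_{\vec{d}\upharpoonright i}(\vec{d}\upharpoonright j)$ for all $j\geq i$. Hence the base condition $\langle \vec{d}\upharpoonright i,\,U^D_{\vec{d}\upharpoonright i}\rangle$ lies in the filter $G_{\vec{d}}$ generated by $\vec{d}$; since it forces $\check{G}\cap\check{D}\neq\emptyset$, the filter $G_{\vec{d}}$ meets $D$. As $D$ was arbitrary, $\vec{d}$ is $\mathbb{P}_\Delta$-generic over $L(\mathbb{R},\mu_{<\omega_1})$.

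The main obstacle will be the final step: turning the eventual containment $d_j\in U^D(\vec{d}\upharpoonright j)$ into the statement that the canonically defined filter $G_{\vec{d}}$ (namely, the upward closure of those $\langle \vec{d}\upharpoonright j, V\rangle$ with $d_k\in V(\vec{d}\upharpoonright k)$ for every $k\geq j$) is genuinely a coherent filter containing the base condition $\langle \vec{d}\upharpoonright i, U^D_{\vec{d}\upharpoonright i}\rangle$. This is exactly where uniformity earns its keep: uniformity of $U^D$, together with the fact that the $U^D_q$ for $q\subset \vec{d}\upharpoonright i$ were folded into $U^D(\vec{d}\upharpoonright j)$, is what guarantees simultaneous compatibility of all stems $\vec{d}\upharpoonright j$ ($j\geq i$) with the single base condition and hence produces the filter structure needed to meet $D$.
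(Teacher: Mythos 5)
Your argument is circular at the decisive step. The sentence $\varphi_p$ you decide --- ``the generic filter contains an extension of $\check p$ lying in $\check D$'' --- is forced outright by \emph{every} condition with stem $p$: since $D$ is dense, genericity of $\dot G$ is forced, and every condition forces itself into $\dot G$. So the decision lemma contributes nothing here, and the functions $W^D_p$, $U^D_p$, and hence your amalgam $U^D$, carry no information about $D$ at all. The fatal inference is the last one: from ``$\langle\vec d\uphar i, U^D_{\vec d\uphar i}\rangle\in G_{\vec d}$'' together with ``this condition forces $\dot G\cap\check D\neq\emptyset$'' you conclude $G_{\vec d}\cap D\neq\emptyset$. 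But a condition forcing a statement about the generic filter says nothing about an arbitrary filter containing that condition unless that filter is already known to be generic --- which is precisely what the lemma is asserting. As a sanity check: since your $U^D$ is not tied to $D$ in any way, the same argument would ``show'' that any sequence satisfying the Mathias hypothesis is generic without ever using a property of $D$, which signals that the density of $D$ has not actually been used where it must be.

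What is needed is a decision of \emph{ground-model} statements about $D$ itself, in the style of the strong Prikry property; this is how the proofs cited by the paper proceed (\cite[Theorem 4.2]{Ket11}, and essentially \cite[Lemma 6.40]{StW16} --- the paper gives no proof of its own). Roughly: for a dense open $D$ and each stem $p$, use the decision lemma and the ultrafilters $\nu_\alpha$ (together with the club filter on $\omega_1$) to produce measure-one sets capturing whether conditions with stems end-extending $p$ by legal points can be placed into $D$, and run an induction (a rank argument on the number of additional points needed) to build a single uniform $U^D$ with the property that any sequence eventually obeying $U^D$ has an initial segment $\vec d\uphar n$ such that $\langle\vec d\uphar n, W\rangle\in D$ for some $W$ that still contains the tail of $\vec d$; such a condition then lies in $G_{\vec d}\cap D$ directly, with no appeal to forcing over a filter not yet known to be generic. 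Your bookkeeping steps (finite intersections of the $U^D_q$, intersecting clubs, verifying uniformity, and checking that the base condition lies in $G_{\vec d}$) are fine as far as they go, but without this capturing of $D$ by stems plus measure-one sets the Mathias hypothesis cannot be brought to bear, and the proof as written does not establish the lemma.
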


    \begin{cor}
        Assume (\ref{global_assump_for_Prikry_forcing}). Then any infinite subsequence of a $\mathbb{P}_{\Delta}$-generic sequence over $L(\mathbb{R}, \mu_{<\omega_1})$ is also $\mathbb{P}_{\Delta}$-generic over $L(\mathbb{R}, \mu_{<\omega_1})$.
    \end{cor}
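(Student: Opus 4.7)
The plan is to reduce everything to the Mathias property (\cref{Mathias_condition}). Let $\vec{d}=\langle d_i\mid i<\omega\rangle$ be $\mathbb{P}_{\Delta}$-generic over $L(\mathbb{R},\mu_{<\omega_1})$, and let $\vec{d}'=\langle d_{n_k}\mid k<\omega\rangle$ be an infinite subsequence determined by an increasing $\langle n_k\mid k<\omega\rangle$. I first need to verify that $\vec{d}'$ is a $\mathbb{D}$-sequence in the sense required by $\mathbb{P}_{\Delta}$, and then show it satisfies the hypothesis of \cref{Mathias_condition}; the genericity of $\vec{d}'$ then follows at once.

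For the coherence condition, an iterated application of the inclusion $L[T,d_{i}(0)]\subseteq L[T,d_{i+1}(0)]$ (valid because $d_i\in L[T,d_{i+1}(0)]$) gives $L[T,d_{n_k+1}(0)]\subseteq L[T,d_{n_{k+1}}(0)]$, so $d_{n_k}$ lies in $L[T,d_{n_{k+1}}(0)]$ and is countable there. Hence $\vec{d}'$ is a legal $\mathbb{D}$-sequence. Next, observe that genericity of $\vec{d}$ implies the Mathias hypothesis for $\vec{d}$ itself: given any uniform $U$ with $\langle\emptyset,U\rangle\in\mathbb{P}_{\Delta}$, the set $\{\langle p,W\rangle\mid\langle p,W\rangle\leq\langle\emptyset,U\rangle\}$ is dense, so some $\langle p,W\rangle\in G$ lies below $\langle\emptyset,U\rangle$, and by the order condition, $d_j\in W(\vec{d}\uphar j)\subset U(\vec{d}\uphar j)$ for all $j\geq\lh(p)$.

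Now fix a uniform $U$ with $\langle\emptyset,U\rangle\in\mathbb{P}_{\Delta}$ and choose $i$ with $d_j\in U(\vec{d}\uphar j)$ for all $j\geq i$. Pick $k_0$ so large that $n_{k_0}\geq i$. For any $k\geq k_0$, the prefix $\vec{d}'\uphar k$ is a subsequence of $\vec{d}\uphar n_k$, so uniformity of $U$ yields $U(\vec{d}\uphar n_k)\subset U(\vec{d}'\uphar k)$; hence $d_{n_k}\in U(\vec{d}'\uphar k)$. This is precisely the hypothesis of \cref{Mathias_condition} applied to $\vec{d}'$, and we conclude that $\vec{d}'$ is $\mathbb{P}_{\Delta}$-generic over $L(\mathbb{R},\mu_{<\omega_1})$.

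There is no serious obstacle here: once the Mathias property and the fact that every $\langle\emptyset,U\rangle$ can be strengthened to one with uniform $U$ are in place, the corollary is essentially formal. The only point that requires a moment's thought is verifying coherence of $\vec{d}'$ as a $\mathbb{D}$-sequence and, more importantly, that uniformity of $U$ goes in the correct direction to pass from $U(\vec{d}\uphar n_k)$ to $U(\vec{d}'\uphar k)$, which it does precisely because the shorter subsequence sits in a larger measure-one set.
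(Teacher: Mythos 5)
Your main line of argument is the same as the paper's: the paper also reduces the corollary to \cref{Mathias_condition} and concludes via $e_j=d_{k_j}\in U(\vec{d}\uphar k_j)\subseteq U(\vec{e}\uphar j)$, using exactly your observation that $\vec{e}\uphar j$ is a subsequence of $\vec{d}\uphar k_j$ and that $U$ is uniform. Your extra verification that the subsequence is still a legal $\mathbb{D}$-sequence is harmless and correct.

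The one step that does not work as written is your justification that the generic sequence $\vec{d}$ itself satisfies the Mathias hypothesis. The set $\{\langle p,W\rangle\mid\langle p,W\rangle\leq\langle\emptyset,U\rangle\}$ is \emph{not} dense in $\mathbb{P}_{\Delta}$: if $\langle q,V\rangle$ is a condition whose stem already escapes $U$, say $q(i)\notin U(q\uphar i)$ for some $i$, then no extension of $\langle q,V\rangle$ lies below $\langle\emptyset,U\rangle$, because being below $\langle\emptyset,U\rangle$ forces every entry of the stem into the corresponding $U$-set; so $\langle\emptyset,U\rangle$ is simply incompatible with such conditions. The statement you want is nevertheless true, via the standard dense set $D_U=\{\langle p,W\rangle\mid W(r)\subseteq U(r)\text{ for all }\mathbb{D}\text{-sequences }r\}$: given $\langle q,V\rangle$, the pointwise intersection $W=V\cap U$ still takes measure-one values (the club filter and each $\nu_\alpha$ are filters), so $\langle q,W\rangle\leq\langle q,V\rangle$ lies in $D_U$. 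Now pick $\langle p,W\rangle\in G\cap D_U$; for each $j\geq\lh(p)$, a condition in $G$ below $\langle p,W\rangle$ with stem of length greater than $j$ (these exist by directedness of $G$ and density of conditions with long stems) witnesses, via the order condition, that $d_j\in W(\vec{d}\uphar j)\subseteq U(\vec{d}\uphar j)$. The paper sidesteps this point by asserting the Mathias hypothesis for $\vec{d}$ ``by genericity'' without proof; once you replace your density claim by the argument above, your proof coincides with the paper's.
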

    \begin{proof}
        Let $\vec{d}$ be $\mathbb{P}_\Delta$-generic over $L(\mathbb{R}, \mu_{<\omega_1})$ and let $\vec{e}=\langle e_i\mid i<\omega\rangle=\langle d_{k_i}\mid i<\omega\rangle$ be an infinite subsequence of $\vec{d}$. By genericity, for any uniform $U$, there is an $i_0<\omega$ such that $\forall j\geq i_0\,(d_j\in U(\vec{d}\uphar j))$. Then for all $j\geq i_0$,
        \[
        e_j = d_{k_j}\in U(\vec{d}\uphar k_j)\subset U(\vec{e}\uphar j),
        \]
        as $\vec{e}\uphar j$ is a subsequence of $\vec{d}\uphar k_j$ and $U$ is uniform. By \cref{Mathias_condition}, $\vec{e}$ is $\mathbb{P}_{\Delta}$-generic.
    \end{proof}

    One can adopt the argument in \cite{TrangThesis,Tr15} for $L(\mathbb{R}, \mu)$ to show the following:

    \begin{thm}[Trang--Woodin, \cite{TrangThesis}]\label{reversed_dervied_model_theorem}
        Assume (\ref{global_assump_for_Prikry_forcing}) and let $G\subset\mathbb{P}_{\Delta}$ be generic over $L(\mathbb{R}, \mu_{<\omega_1})$. Then in $L(\mathbb{R}, \mu_{<\omega_1})[G]$, there is a proper class model $N$ of $\mathsf{ZFC}$ such that letting $\lambda$ be the supremum of all Woodin cardinals of $N$,
        \begin{itemize}
            \item $\lambda$ is the order type of all Woodin cardinals of $N$, and
            \item there is an $N$-generic $H\subset\mathrm{Col}(\omega, {<}\lambda)$ such that $L(\mathbb{R}, \mu_{<\omega_1})$ is the derived $\Delta$-Solovay model of $N$ at $\lambda$ via $H$.
        \end{itemize}
    \end{thm}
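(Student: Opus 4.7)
The plan is to adapt the Trang--Woodin construction from \cite{TrangThesis} for $L(\mathbb{R}, \mu)$ to the diagonal setting, working in $L(\mathbb{R}, \mu_{<\omega_1})[G]$. The generic $G$ gives a sequence $\vec{d} = \langle d_i \mid i < \omega\rangle$ with $d_i \in \mathbb{D}_{\alpha_i}$ for a strictly increasing sequence of countable ordinals satisfying $\sup_i \alpha_i = \omega_1^V$. The idea is to attach to each finite initial segment $\vec{d}\upharpoonright n$ a natural inner model and then pass to a direct limit.

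First I would, for each $n<\omega$, use the fact that $d_{n-1} \in \mathbb{D}_{\alpha_{n-1}}$ and the definition of $\nu_{\alpha_{n-1}}$ to extract, via HOD analysis under $\mathsf{AD}^+ + \mathsf{MC}$ (available by \cref{mouse_capturing}), an inner model $N_n$ of $\mathsf{ZFC}$ whose Woodin-cardinal structure captures the stacked ultrafilters $\nu_{\alpha_0}, \ldots, \nu_{\alpha_{n-1}}$. By the recursive construction of $\nu_\alpha$ and Ketchersid-style analysis, each $N_n$ should carry an initial segment of Woodin cardinals of order type $\omega^{1+\alpha_{n-1}}$ (or similar), with the first-$n$ blocks arising from the $d_i$'s.

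Second, I would realize the $N_n$ as coherent iterates of one another using the genericity of $\vec{d}$ (together with the Mathias property from \cref{Mathias_condition}) and form $N$ as their direct limit. The Mathias property guarantees that permuting or cofinally restricting the sequence preserves genericity, which is what makes the transition maps between $N_n$ and $N_{n+1}$ well-defined and compatible. The wellfoundedness of the direct limit is inherited from iterability of the relevant mice, and the Woodin cardinals of $N$ will have supremum $\sup_i \omega^{1+\alpha_i} = \omega_1^V$ with order type exactly $\omega_1^V$ by telescoping the block lengths.

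Third, I would construct $H \subset \mathrm{Col}(\omega, {<}\lambda)$ over $N$ from the reals coded into the degrees $d_i$ together with the symmetric-reals structure provided by $G$, verify that $\mathbb{R}^*_H = \mathbb{R}^V$, and show $\mathcal{F}^H_\alpha = \mu_\alpha$ for all $\alpha < \omega_1$. This last identification is precisely what the definition of $\nu_\alpha$ was engineered to force: an $\infty$-Borel code $S$ is a ``generic witness'' for $\mu_\alpha$-large sets exactly when the HOD-internal Prikry forcing associated with $\mathcal{F}^g_\alpha$ forces the generic degree into the interpretation of $S$, and this matches the filter $\mathcal{F}^H_\alpha$ read off from $H$.

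The main obstacle, as in \cite{TrangThesis, Tr15}, will be showing that the Woodin cardinals survive at the limit stage of the direct-limit construction and that no collapse occurs between successive blocks, so that the total order type of Woodins below $\lambda$ is exactly $\lambda$ rather than something smaller. A subtler point is the simultaneous identification $\mathcal{F}^H_\alpha = \mu_\alpha$ for \emph{all} $\alpha < \omega_1$ at once: one needs the Prikry sequence $\vec{d}$ to code a uniformly coherent family of $\infty$-Borel codes, and one must check that the Mathias-style closure under subsequences of $\vec{d}$ translates into the ``for all but finitely many $m$'' clause defining $\mathcal{F}^H_\alpha$ at every level simultaneously. Once these points are handled, the derived model theorem \cref{derived_model_theorem} applied internally to $N$ in $N[H]$ verifies that the model $L(\mathbb{R}^*_H, \mu^H_{<\lambda})^{N[H]}$ coincides with $L(\mathbb{R}, \mu_{<\omega_1})$.
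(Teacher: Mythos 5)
Your overall skeleton does match the Trang--Woodin route that the paper invokes (the paper itself gives no proof of this theorem; it quotes it from \cite{TrangThesis} and says the $L(\mathbb{R},\mu)$ argument adapts): take the Prikry generic $\vec{d}$, build an inner model out of local-HOD-like structures along the degrees, arrange the reals of $L(\mathbb{R},\mu_{<\omega_1})$ to be symmetrically generic over it, and identify the filters. But the middle step is mis-described in a way that matters. In the actual construction $N$ is not a direct limit of models $N_n$ along iteration or transition maps: for each degree $d_i(\beta)$ one forms the suitable premouse $\mathcal{Q}^{d_i(\beta)}_{(\text{previous level})}$ (each of these is itself a direct limit of a simultaneous comparison and genericity iteration system, exactly the operator $\mathcal{Q}^x_a$ and the stack $\langle\mathcal{Q}^i_\beta\rangle$, $M^{\vec{d}}(a)$, of \cref{mouse_from_DetSol}), and $N$ is (the constructible closure of) the resulting end-extending union. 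There are no ``transition maps between $N_n$ and $N_{n+1}$'' that need to be made compatible, the Mathias property (\cref{Mathias_condition}) is not what would make them so --- it is used for tail-independence/homogeneity-type arguments and, downstream, for \cref{Sigma_1_reflection} --- and wellfoundedness of the union is automatic because the pieces end-extend; it is the Woodinness of the $\delta^i_\beta$ in the stack (via $\mathrm{Lp}^\Gamma$-closure, suitability, and $\mathsf{MC}$) that needs the iterability input, not the limit.

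The more serious gap is your last sentence: applying \cref{derived_model_theorem} internally to $N$ in $N[H]$ does \emph{not} verify that $L(\mathbb{R}^*_H,\mu^H_{<\lambda})^{N[H]}$ coincides with $L(\mathbb{R},\mu_{<\omega_1})$; that theorem only yields that the derived $\Delta$-Solovay model of $N$ satisfies $\mathsf{AD}^+$ and the ultrafilter statements, and says nothing about which sets of reals it contains. The heart of the Trang--Woodin proof is precisely this identification: (i) every set of reals of $L(\mathbb{R},\mu_{<\omega_1})$ that is ordinal definable from a real must be captured by term relations on the suitable premice used to build $N$, so that it belongs to the derived model --- this is the $A$-iterability/term-capturing machinery (the analogue of \cref{capturing_OD_sets}, resting on $\mathsf{MC}$ from \cref{mouse_capturing}), and it is the reason the construction of $N$ has to be carried out with suitable, $A$-iterable premice in the first place; (ii) conversely, since $\mathbb{P}_\Delta$ adds no reals and is sufficiently homogeneous, the derived model's sets of reals lie in the ground model; and (iii) a level-by-level induction in the spirit of \cref{DSM_is_init_seg_of_SM}, using that each $\mu_\alpha$ is an ultrafilter, to identify $\mathcal{F}^H_\alpha$ restricted to each level with $\mu_\alpha$. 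You correctly flag (iii) as delicate, but (i) is entirely absent from your sketch, and without it the proposed argument only produces \emph{some} model of the right theory rather than $L(\mathbb{R},\mu_{<\omega_1})$ itself.
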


    The important corollary of this is the following form of $\Sigma_1$ reflection in the $\Delta$-Solovay model, which will be used in the next section.

    \begin{thm}[Trang--Woodin, \cite{TrangThesis}]\label{Sigma_1_reflection}
        Assume (\ref{global_assump_for_Prikry_forcing}). Then
        \[
        \langle L_{\delta^2_1}(\mathbb{R}, \mu_{<\omega_1}), \in, \mu_{<\omega_1}\cap L_{\delta^2_1}(\mathbb{R}, \mu_{<\omega_1})\rangle \prec_{\Sigma_1} \langle L(\mathbb{R}, \mu_{<\omega_1}), \in, \mu_{<\omega_1}\rangle.
        \]
        Here, we consider $\Sigma_1$-elementarity in the language of set theory with an additional predicate for $\mu_{<\omega_1}$.
    \end{thm}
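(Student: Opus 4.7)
The plan is to prove this $\Sigma_1$-reflection following the template of Woodin's classical reflection theorem for $L(\mathbb{R})$, adapted to the generalized Solovay setting via the reverse derived model theorem. Suppose $\phi(v,w)$ is $\Sigma_1$ in the extended language and $p \in L_{\delta^2_1}(\mathbb{R}, \mu_{<\omega_1})$ is such that $\langle L(\mathbb{R}, \mu_{<\omega_1}), \in, \mu_{<\omega_1}\rangle \models \exists x\, \phi(x, p)$; the goal is to produce a witness $x \in L_{\delta^2_1}(\mathbb{R}, \mu_{<\omega_1})$.

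First I would fix a working characterization of $\delta^2_1$ in $L(\mathbb{R}, \mu_{<\omega_1})$ as the supremum of the lengths of $\Sigma^2_1$-prewellorderings in the extended structure $\langle H(\omega_1), \in, \mu_{<\omega_1}\rangle$. Under $\mathsf{AD}^+ + \mathsf{MC}$, which holds by \cref{mouse_capturing}, every element of $L_{\delta^2_1}(\mathbb{R}, \mu_{<\omega_1})$ admits an $\infty$-Borel code that is $\Sigma^2_1$-definable in this extended sense from a real parameter; in particular such elements are Suslin at some bounded stage of the derived-model construction.

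Next I would apply \cref{reversed_dervied_model_theorem} to realize $L(\mathbb{R}, \mu_{<\omega_1})$ as the derived $\Delta$-Solovay model $L(\mathbb{R}^*_H, \mu^H_{<\lambda})$ of some inner model $N$ at $\lambda$ via a $\mathrm{Col}(\omega, {<}\lambda)$-generic $H$. By \cref{derived_model_theorem}(1) applied to $N[H]$, the $\Sigma^2_1$-definable $\infty$-Borel code for $p$ lifts to a $\mathrm{Hom}_{<\lambda}^{N[H\uphar\eta]}$-representable set for some $\eta < \lambda$. The existential statement then translates, via the derived-model correspondence, into a statement about Neeman-style genericity iterations of $N$; minimizing the witness in the $\mathrm{Hom}_{<\lambda}^N$-induced wellorder (together with a least-$\infty$-Borel-code argument for the witness itself) yields a canonical $x^*$ definable from parameters of the same complexity as $p$, and hence $x^* \in L_{\delta^2_1}(\mathbb{R}, \mu_{<\omega_1})$.

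The main obstacle is establishing that the $L_{\delta^2_1}(\mathbb{R}, \mu_{<\omega_1})$-layer corresponds exactly to the $\mathrm{Hom}_{<\lambda}^N$-representable sets in the derived Solovay model: one must verify that $\Sigma_1$-complexity in the extended language is preserved by the passage between $V$ and $N[H]$ in both directions, paralleling Trang's analysis of $L(\mathbb{R}, \mu)$ in \cite{TrangThesis}. A secondary technical point is handling the predicate $\mu_{<\omega_1}$: since membership in each $\mu_\alpha$ is witnessed by a winning strategy in the Solovay game $G^{\mathrm{Sol}}_\alpha$, these strategies must themselves carry $\infty$-Borel codes at the correct level, which should follow from the uniform definability of $\mu^H_{<\lambda}$ built into the derived-model construction and the homogeneity of the Prikry-like forcing $\mathbb{P}_\Delta$ described in \cref{subsection:Prikry-like_forcing}.
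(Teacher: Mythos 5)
Your plan is essentially the paper's own proof: the paper establishes \cref{Sigma_1_reflection} by citing the argument of \cite[Theorem 1.2]{Tr15}, i.e., exactly the combination of the reverse derived model theorem (\cref{reversed_dervied_model_theorem}) to realize $L(\mathbb{R}, \mu_{<\omega_1})$ as a derived $\Delta$-Solovay model of some $N$, and \cref{derived_model_theorem}(1) to push $\Sigma_1$ witnesses down to $\mathrm{Hom}_{<\lambda}$ sets of intermediate models, which is what you propose. The paper gives no further details beyond this citation, so your sketch (modulo the routine reduction of parameters in $L_{\delta^2_1}(\mathbb{R},\mu_{<\omega_1})$ to real parameters, which Trang's argument handles in the standard way) follows the same route.
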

    \begin{proof}
        By the proof of \cite[Theorem 1.2]{Tr15}, this follows from \cref{derived_model_theorem} and \cref{reversed_dervied_model_theorem}.
    \end{proof}

    \section{A mouse from determinacy in the Solovay-type model}\label{mouse_from_DetSol}
    
    We will prove the following theorem in this section.
    
    \begin{thm}\label{mouse_from_determinacy}
        Suppose that
        \[
        L(\mathbb{R}, \mu_{<\omega_1})\models\mathsf{AD} + \forall\alpha<\omega_1(\mu_{\alpha}\text{ is an ultrafilter on }[\power_{\omega_1}(\mathbb{R})]^{\omega^\alpha})
        \]
        and the sharp for $L(\mathbb{R}, \mu_{<\omega_1})$ exists. Then there is an $\omega_1$-iterable active premouse $\mathcal{M}$ with a limit $\lambda$ of Woodin cardinals of $\mathcal{M}$ such that the order type of Woodin cardinals below $\lambda$ of $\mathcal{M}$ is $\lambda$.
    \end{thm}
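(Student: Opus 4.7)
The plan is to follow the blueprint established by Woodin and Trang for $L(\mathbb{R}, \mu)$ in \cite{TrangThesis}, adapted to the richer filter structure of $L(\mathbb{R}, \mu_{<\omega_1})$. The reversed derived model theorem (\cref{reversed_dervied_model_theorem}) supplies, after forcing with the Prikry-like poset $\mathbb{P}_\Delta$, a proper class inner model $N$ carrying a limit $\lambda$ of Woodin cardinals of order type $\lambda$, for which $L(\mathbb{R}, \mu_{<\omega_1})$ is the derived $\Delta$-Solovay model at $\lambda$. Since $N$ is definable from the $\mathbb{P}_\Delta$-generic sequence of $\Sigma^2_1$-degrees together with a universal $\Sigma^2_1$-tree, a standard Vop\v{e}nka-style argument identifies $N$ up to $\lambda$ with a hod-like inner model of $\mathrm{HOD}^{L(\mathbb{R}, \mu_{<\omega_1})}$, using $\mathsf{MC}$ from \cref{mouse_capturing} to recognise each set of reals ordinal-definable in $L(\mathbb{R}, \mu_{<\omega_1})$ as coming from a mouse.

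First, working inside $L(\mathbb{R}, \mu_{<\omega_1})$, I would isolate a sound, internally iterable hod-premouse $\mathcal{H}$ of ordinal height $\lambda$ whose Woodin cardinals witness the required order-type condition. Internal iterability of $\mathcal{H}$ is inherited from the derived-model representation: a putative countable iteration tree on $\mathcal{H}$ is, via the reversed derived model theorem, the image of a tree on a countable version of $N$, whose branches are selected by Neeman's genericity iteration and realised back into a sufficiently large $V_\theta^{L(\mathbb{R}, \mu_{<\omega_1})}$. The $\Sigma_1$-reflection principle of \cref{Sigma_1_reflection} lets us reflect the existence and uniqueness of $\mathcal{H}$ down to $L_{\delta^2_1}(\mathbb{R}, \mu_{<\omega_1})$ so that $\mathcal{H}$ is absolutely specified.

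Next, I would use the sharp for $L(\mathbb{R}, \mu_{<\omega_1})$ to supply an active top extender above $\lambda$. By \cref{dfn:sharp}, there is a nontrivial elementary $j\colon L(\mathbb{R}, \mu_{<\omega_1})\to L(\mathbb{R}, \mu_{<\omega_1})$ with $\crit(j)=\kappa>\Theta^{L(\mathbb{R}, \mu_{<\omega_1})}$. Since $\mathcal{H}$ is ordinal-definable and $\kappa$ is far above $\lambda$, the restriction $j\uphar \mathcal{H}$ derives a long extender $F$ concentrating on $\mathcal{H}$; forming $\mathcal{M}^+ = \mathcal{H}^{\frown}\langle F\rangle$ yields an active premouse in $L(\mathbb{R}, \mu_{<\omega_1})$ with the requested Woodin structure. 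Taking a countable elementary hull $\mathcal{M}$ of a suitable sound initial segment of $\mathcal{M}^+$ in $V$ produces the desired candidate active premouse.

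The main obstacle will be establishing full $\omega_1$-iterability of $\mathcal{M}$ in $V$, given that $\omega_1^V$ may strictly exceed $\omega_1^{L(\mathbb{R}, \mu_{<\omega_1})}$ and that the Woodin cardinals of $\mathcal{M}$ are cofinal in $\lambda$ with order type $\lambda$ (so iteration trees can touch unboundedly many of them). The plan is to realise any countable iteration tree $\mathcal{T}\in V$ on $\mathcal{M}$ as a tree that $L(\mathbb{R}, \mu_{<\omega_1})$ sees, then use the internal iteration strategy together with absoluteness provided by iterating the embedding $j$ from the sharp, so that wellfoundedness of branches chosen internally transfers to $V$. Unlike the $L(\mathbb{R}, \mu)$ case treated by Trang--Woodin, this transfer must be iterated along the $\omega_1$-sequence of Woodin cardinals of $\mathcal{M}$, and the bookkeeping required to ensure compatibility at limit stages of the iteration of $j$ is the principal technical difficulty I anticipate.
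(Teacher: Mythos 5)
Your route diverges from the paper's, and it contains a step that fails as stated: the derivation of the top extender from the sharp. By \cref{dfn:sharp}, the embedding $j$ has $\crit(j)>\Theta^{L(\mathbb{R},\mu_{<\omega_1})}$, while your structure $\mathcal{H}$ has ordinal height $\lambda$, which is far below $\Theta^{L(\mathbb{R},\mu_{<\omega_1})}$ (the Woodin cardinals of the model $N$ from \cref{reversed_dervied_model_theorem} lie below its derived-model $\Theta$). Hence $j\uphar\mathcal{H}$ is the identity and derives no extender at all; an active premouse needs a top extender whose critical point lies \emph{inside} the premouse, so you would have to replace $\mathcal{H}$ by an ordinal-definable fine-structural premouse reaching above $\crit(j)$ --- i.e.\ essentially a full HOD analysis of $L(\mathbb{R},\mu_{<\omega_1})$ --- which is not available and is in effect what has to be proved. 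Relatedly, your ``Vop\v{e}nka-style identification'' of $N$ with a hod-premouse is where all the content sits: $N$ is a coarse $\mathsf{ZFC}$ class model of a $\mathbb{P}_\Delta$-generic extension, with no fine structure and no iteration strategy provided by \cref{reversed_dervied_model_theorem}; $\mathsf{MC}$ (\cref{mouse_capturing}) captures individual reals by mice but does not turn $N$ into an iterable premouse.

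The second gap is the one you flag yourself but leave unresolved: $\omega_1$-iterability in $V$. ``Realising trees inside the model and iterating $j$'' gives no way to choose wellfounded branches at maximal trees, where $\mathcal{Q}$-structures do not exist. The paper's proof supplies exactly this missing mechanism, and it is also where the sharp hypothesis is genuinely used: the sharp yields that $\power(\mathbb{R})\cap L(\mathbb{R},\mu_{<\omega_1})$ is weakly scaled (\cref{existence_of_sjs}), hence there is a self-justifying system sealing the envelope of $(\Sigma^2_1)^{L(\mathbb{R},\mu_{<\omega_1})}$; one then builds, for a cone of $y$, $\Gamma$-suitable $y$-premice of type $\Delta$ that are $A$-iterable for the members of the system (\cref{capturing_OD_sets}, via $\Sigma_1$-reflection, the Prikry forcing $\mathbb{P}_\Delta$, and backgrounded constructions inside Suslin-capturing coarse mice), and finally obtains a normal $\omega_1$-iteration strategy by the condensation/limit-branch (``zipper'') argument on the term relations (\cref{limit_branch_construction}), after which the coarse iterability criterion of Fuchs--Neeman--Schindler converts this into the active premouse of \cref{mouse_from_determinacy}. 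Your sketch bypasses all of this term-capturing machinery, and without it neither the fine structure nor the branch selection at maximal trees is justified.
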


    \subsection{$\Gamma$-suitability and $A$-iterability}

    We review the definition of $\Gamma$-suitability and $A$-iterability in \cite{scales_at_weak_gap,scales_in_hybrid_mice} with slight adaptation to our context. In particular, see the definition of $\Gamma$-suitable premouse of type $\Delta$ in \cref{suitable_pm}. One can find similar adaptation in \cite{RTr18}.

    \begin{dfn}[{\cite[Definition 2.43]{scales_in_hybrid_mice}}]\label{Lp_stack}\leavevmode
        \begin{enumerate}
            \item Let $\mathcal{M}$ be an $x$-premouse for some transitive $x$. We say that $\mathcal{M}$ is \emph{countably $(n, \omega_1+1)$-iterable (above $\eta$)} if whenever there is an elementary embedding $\pi\colon\overline{\mathcal{M}}\to\mathcal{M}$  with $x\in\mathrm{ran}(\pi)$ (and $\eta\in\mathrm{ran}(\pi)$), then $\overline{\mathcal{M}}$ is $(n, \omega_1+1)$-iterable (above $\pi^{-1}(\eta)$) as a $\pi^{-1}(x)$-premouse.
            \item For a transitive set $x$, $\mathrm{Lp}^\Gamma(x)$ denotes the stack of all countably $(\omega, \omega_1+1)$-iterable $x$-premice such that $\mathcal{M}$ is fully sound and projects to $\omega$.
            \item Let $\mathcal{N}$ be a $x$-premouse for some transitive $x$ and let $\eta = \mathcal{N}\cap\mathrm{Ord}$. Then $\mathrm{Lp}_+^\Gamma(\mathcal{N})$ denotes the stack of all $x$-premice $\mathcal{M}$ extending $\mathcal{N}$ such that either $\mathcal{M} = \mathcal{N}$ or $\mathcal{M}$ is $\eta$-sound, $\rho^{\mathcal{N}}_{n+1}\leq\eta<\rho_n^{\mathcal{N}}$ for some $n<\omega$, and $\mathcal{M}$ is countably $(n, \omega_1+1)$-iterable above $\eta$ as witnessed by iteration strategies whose restrictions to countable iteration trees are in $\Gamma$.
        \end{enumerate}
    \end{dfn}

    \begin{dfn}\label{C_Gamma}
        Let $\Gamma$ be a pointclass. For $x\in\mathbb{R}$, $C_\Gamma(x)$ denotes the set of all $y\in\mathbb{R}$ such that $y$ is $\Delta_{\Gamma}(x)$ in a countable ordinal.\footnote{$\Gamma(x)$ denotes the relativization of $\Gamma$ to $x$ and $\Delta_\Gamma(x)$ denotes the ambiguous part of $\Gamma(x)$, i.e.\ $\Gamma(x)\cap\check{\Gamma}(x)$.} For transitive countable set $x$, $C_\Gamma(x)$ denotes the set of all $y\subset x$ such that for every surjective function $f\colon\omega\to x$, $f^{-1}[y]\in C_{\Gamma}(x_f)$, where $x_f\in\mathbb{R}$ is the code of $x$ via $f$.\footnote{That is, $x_f$ codes $f^{-1}[\mathord{\in}\cap(x\times x)]$ in a recursive way.}
    \end{dfn}
    
    \begin{dfn}[{cf.\ \cite[Definition 5.28]{scales_in_hybrid_mice}}]\label{suitable_pm}
        Let $x\in\mathbb{R}$, $\mathcal{P}$ be a countable $x$-premouse, and $\Gamma$ be a pointclass. We say that $\mathcal{P}$ is \emph{$\Gamma$-suitable} if, letting $\langle\delta_{\alpha}\mid\alpha<\lambda\rangle$ be the increasing enumeration of Woodin cardinals and limits of Woodin cardinals of $\mathcal{P}$, then we have:
        \begin{enumerate}
            \item if $\eta$ is a strong cutpoint of $\mathcal{P}$,\footnote{Recall that $\eta$ is a \emph{strong cutpoint} of a given premouse $\mathcal{P}$ if there is no (partial) extender $E$ on the $\mathcal{P}$-sequence with $\crit(E)\leq\eta<\lh(E)$.} then 
            \[
            \mathcal{P}\vert(\eta^+)^{\mathcal{P}} = \mathrm{Lp}^{\Gamma}_{+}(\mathcal{P}\vert\eta),
            \]
            \item if $\eta\in\mathcal{P}\cap\mathrm{Ord}$ is not Woodin in $\mathcal{P}$, then
            \[
            C_{\Gamma}(\mathcal{P}\vert\eta)\models\text{$\eta$ is not Woodin},
            \]
            \item either
            \begin{enumerate}
                \item $\lambda$ is a successor ordinal and $\mathcal{P}\cap\mathrm{Ord} = \sup_{i<\omega}(\delta_{\lambda - 1}^{+ i})^{\mathcal{P}}$, or
                \item $\lambda$ is a limit ordinal and $\mathcal{P}\cap\mathrm{Ord} = \sup_{\alpha<\lambda}\delta_{\alpha}$.
            \end{enumerate}
            \item (Smallness assumption) for any $\alpha<\lambda$, $\alpha<\delta_{\alpha}$.
        \end{enumerate}
        Now let $\mathcal{P}$ be $\Gamma$-suitable. We write $\langle\delta^{\mathcal{P}}_{\alpha}\mid \alpha<\lambda^{\mathcal{P}}\rangle$ for the increasing enumeration of all Woodin cardinals and limits of Woodin cardinals of $\mathcal{P}$. We also write $\delta_{-1}^{\mathcal{P}}=0$ for convenience. We say that
        \begin{itemize}
            \item $\mathcal{P}$ has \emph{successor type} if $\lambda^{\mathcal{P}}$ is a successor ordinal,
            \item $\mathcal{P}$ has \emph{limit type} if $\lambda^{\mathcal{P}}$ is a limit ordinal, and
            \item $\mathcal{P}$ has \emph{type $\Delta$}\footnote{The letter $\Delta$ indicates the long game we introduced.} if $\lambda^{\mathcal{P}} = \mathcal{P} \cap \mathrm{Ord}$.
        \end{itemize}
        Furthermore, for any $\alpha<\lambda^{\mathcal{P}}$, let $\mathcal{P}(\alpha)$ be the unique $\Gamma$-suitable initial segment $\mathcal{Q}$ of $\mathcal{P}$ such that
        \begin{itemize}
            \item if $\alpha$ is either 0 or a successor ordinal, then $\lambda^{\mathcal{Q}} = \alpha+1$, and
            \item if $\alpha$ is a limit ordinal, then $\lambda^{\mathcal{Q}} = \alpha$.
        \end{itemize}
    \end{dfn}
    
    In the previous literature such as \cite{scales_at_weak_gap,scales_in_hybrid_mice}, the smallness assumption for a $\Gamma$-suitable $\mathcal{P}$ was $\lambda^{\mathcal{P}}\leq\omega$. Even though we relaxed this smallness assumption in \cref{suitable_pm}, basic arguments for $\Gamma$-suitable premice work in our context. Note that in a $\Gamma$-suitable premouse $\mathcal{P}$, every Woodin cardinal is a strong cutpoint even under our smallness assumption.
    
    \begin{dfn}[{\cite[Definition 5.29]{scales_in_hybrid_mice}}]\label{guided_trees}
        Let $\mathcal{P}$ be a countable premouse and $\mathcal{T}$ be a countable normal iteration tree on $\mathcal{P}$. Then $\mathcal{T}$ is \emph{Q-guided} if for each limit $\lambda<\lh(\mathcal{T})$, $\mathcal{Q}:=\mathcal{Q}([0, \lambda]_{\mathcal{T}}, \mathcal{T}\upharpoonright\lambda)$ exists\footnote{See \cite[Definition 2.4]{MSW20} for the definition of $\mathcal{Q}(b, \mathcal{T})$.} and the phalanx $\Phi(\mathcal{T}\upharpoonright\lambda){}^{\frown}\langle\mathcal{Q}, \delta(\mathcal{T})\rangle$ is $(\omega, \omega_1 +1)$-iterable.\footnote{In our context, $\delta(\mathcal{T})$ is always a strong cutpoint of $\mathcal{Q}$, since we only encounter tame premice. In this case, the iterability of $\Phi(\mathcal{T}\upharpoonright\lambda){}^{\frown}\langle\mathcal{Q}, \delta(\mathcal{T})\rangle$ is equivalent to the iterability of $\mathcal{Q}$ above $\delta(\mathcal{T})$.} For a pointclass $\Gamma$, we say that $\mathcal{T}$ is \emph{$\Gamma$-guided} if it is Q-guided, as witnessed by iteration strategies whose codes are in $\Gamma$.
    \end{dfn}
    
    Because $\Gamma$-suitable premice do not have overlapped Woodin cardinals, the standard phalanx comparison argument shows that there is at most one cofinal branch $b$ through $\mathcal{T}$ such that $\mathcal{T}^{\frown}b$ is Q-guided.
    
    \begin{dfn}[{\cite[Definition 5.31]{scales_in_hybrid_mice}}]\label{short_trees}
        Let $\mathcal{P}$ be a $\Gamma$-suitable $x$-premouse for some $x\in\mathbb{R}$ and $\mathcal{T}$ be a $\Gamma$-guided iteration tree on $\mathcal{P}$ of limit length. We say that $\mathcal{T}$ is \emph{$\Gamma$-short} if $\mathcal{Q}(\mathcal{T})$ exists and is initial segment of $\mathrm{Lp}^{\Gamma}_{+}(\mathcal{M}(\mathcal{T}))$.\footnote{See c\cite[Definition 2.4]{MSW20} for the definition of $\mathcal{Q}(\mathcal{T})$. Also, $\mathcal{M}(\mathcal{T})$ denotes the common part model of $\mathcal{T}$ as usual.} Otherwise, we say that $\mathcal{T}$ is \emph{$\Gamma$-maximal}.
    \end{dfn}
    
    \begin{dfn}[{\cite[Definition 5.32]{scales_in_hybrid_mice}}]\label{suitability_strictness}
        Let $\mathcal{P}$ be a $\Gamma$-suitable $x$-premouse for some $x\in\mathbb{R}$.
        Let $\mathcal{T}$ be an iteration tree on $\mathcal{P}$.
        We say that $\mathcal{T}$ is \emph{$\Gamma$-suitability strict} if for every $\alpha<\mathrm{lh}(\mathcal{T})$,
        \begin{enumerate}
            \item If $[0, \alpha]_{\mathcal{T}}$ does not drop then $M^{\mathcal{T}}_{\alpha}$ is $\Gamma$-suitable.
            \item If $[0, \alpha]_{\mathcal{T}}$ drops and there are trees $\mathcal{U}, \mathcal{V}$ such that $\mathcal{T}\upharpoonright\alpha+1 = \mathcal{U}{}^{\frown}\mathcal{V}$, where $\mathcal{U}$ has last model $\mathcal{R}$, $b^{\mathcal{U}}$ does not drop, and $\mathcal{V}$ is based on $[\delta^{\mathcal{R}}_{\beta-1}, \delta^{\mathcal{R}}_{\beta})$ for some $\beta\in\mathcal{R}\cap\mathrm{Ord}$, then there is no $\Gamma$-suitable $\mathcal{Q}\init M^{\mathcal{T}}_{\alpha}$ with $\lambda^{\mathcal{Q}}\geq 1+\beta$.
        \end{enumerate}
        We say that a (partial) iteration strategy $\Sigma$ for $\mathcal{P}$ is \emph{$\Gamma$-suitability strict} if every tree via $\Sigma$ is $\Gamma$-suitability strict.
    \end{dfn}
    
    \begin{dfn}[{\cite[Definition 5.33]{scales_in_hybrid_mice}}]\label{short_tree_iterability}
        Let $\mathcal{P}$ be $\Gamma$-suitable. We say that $\mathcal{P}$ is \emph{$\Gamma$-short tree iterable} if for every normal $\Gamma$-guided tree $\mathcal{T}$ on $\mathcal{P}$,
        \begin{enumerate}
            \item $\mathcal{T}$ is $\Gamma$-suitability strict,
            \item if $\mathcal{T}$ has limit length and is $\Gamma$-short then there is a cofinal wellfounded branch $b$ through $\mathcal{T}$ such that $\mathcal{T}^{\frown}b$ is $\Gamma$-guided,
            \item if $\mathcal{T}$ has successor length, then every one-step putative normal extension of $\mathcal{T}$ is an iteration tree.
        \end{enumerate}
    \end{dfn}
    
    We define (normal, almost, local) $A$-iterability as in \cite{scales_at_weak_gap} with the modification pointed out in \cite[Definition 5.35]{scales_in_hybrid_mice}. For the readers' convenience, we write down the precise definitions below.
    
    \begin{dfn}[{\cite[Definition 1.3]{scales_at_weak_gap}}]\label{local_term_capturing}
        Let $A\subset\mathbb{R}$.
        Let $\mathcal{P}$ be a premouse and let $\nu$ be a cardinal of $\mathcal{P}$.
        We say that \emph{$\mathcal{P}$ captures $A$ at $\nu$} if there is a $\mathrm{Col}(\omega, \nu)$-name $\tau\in\mathcal{P}$ such that $\tau^g = A\cap\mathcal{P}[g]$ for all $\mathcal{P}$-generic $g\subset\mathrm{Col}(\omega, \nu)$. Let $\tau^{\mathcal{P}}_{A, \nu}$ be such a unique standard name. Here, a $\mathrm{Col}(\omega, \nu)$-name $\tau$ is called \emph{standard} if
        \[
        \tau = \{\langle p, \sigma\rangle\mid p\in\mathrm{Col}(\omega, \nu)\land \sigma\subset\mathrm{Col}(\omega, \nu)\times\{\check{n}\mid n\in\omega\land p\Vdash\sigma\in\tau\}\}.
        \]
    \end{dfn}
    
    \begin{dfn}[cf.\ {\cite[Definition 1.6]{scales_at_weak_gap}}]\label{tree_respects_s}
        Let $A\subset\mathbb{R}$ and $\mathcal{P}$ be a $\Gamma$-suitable $y$-premouse for some $y\in\mathbb{R}$, capturing $A$ at all cardinals of $\mathcal{P}$. Let $\mathcal{T}$ be an iteration tree on $\mathcal{P}$. We say that \emph{$\mathcal{T}$ respects $A$} (with respect to $\Gamma$) if $\mathcal{T}$ is $\Gamma$-suitability strict and for any $\alpha<\lh(\mathcal{T})$,
        \begin{enumerate}
            \item if $[0, \alpha]_{\mathcal{T}}$ does not drop, then letting $\mathcal{R} = M^{\mathcal{T}}_{\alpha}$ and $i=i^{\mathcal{T}}_{0, \alpha}$, $\mathcal{R}$ is $\Gamma$-suitable and captures $A$ at all cardinals of $\mathcal{R}$ and
            \[
            i(\tau^{\mathcal{P}}_{A, \nu}) = \tau^{\mathcal{R}}_{A, i(\nu)}
            \]
            holds for all cardinals $\nu$ of $\mathcal{P}$, and
            \item if $\alpha$ is a limit ordinal and $[0, \alpha]_{\mathcal{T}}$ drops, then $\mathcal{T}$ is $\Gamma$-short.
        \end{enumerate}
    \end{dfn}
    
    \begin{dfn}[{\cite[Definition 5.35]{scales_in_hybrid_mice}, \cite[Definition 1.7]{scales_at_weak_gap}}]\label{normally_A-iterability}
        Let $A\subset\mathbb{R}$ and $\mathcal{P}$ be a $\Gamma$-suitable $y$-premouse for some $y\in\mathbb{R}$, capturing $A$ at all cardinals of $\mathcal{P}$. Then $\mathcal{P}$ is \emph{normally $A$-iterable} (with respect to $\Gamma$) if $\mathcal{P}$ is $\Gamma$-short tree iterable and one of the following holds:
        \begin{enumerate}
        \item $\mathcal{P}$ has successor type and whenever $\mathcal{T}$ is a normal $\Gamma$-guided iteration tree on $\mathcal{P}$, then
        \begin{itemize}
            \item $\mathcal{T}$ respects $A$,
            \item if $\mathcal{T}$ has successor length, then every one-step putative normal extension of $\mathcal{T}$ is an iteration tree,
            \item if $\mathcal{T}$ has limit length and is $\Gamma$-short, then there is a cofinal branch $b$ through $\mathcal{T}$ such that $\mathcal{T}^{\frown} b$ is $\Gamma$-guided, and
            \item if $\mathcal{T}$ is $\Gamma$-maximal, then there is a cofinal, wellfounded, non-dropping branch $b$ through $\mathcal{T}$ such that $\mathcal{T}^{\frown} b$ respects $A$.
        \end{itemize}
        \item $\mathcal{P}$ has limit type and for any $\alpha<\lambda^{\mathcal{P}}$, if $\alpha$ is either 0 or a successor, then $\mathcal{P}(\alpha)$ is normally $A$-iterable in the sense of (1).
        \end{enumerate}
    \end{dfn}
    
    \begin{dfn}[{\cite[Definition 1.9]{scales_at_weak_gap}}]\label{almost_A-iterability}
        Let $A\subset\mathbb{R}$ and $\mathcal{P}$ be a $\Gamma$-suitable $y$-premouse for some $y\in\mathbb{R}$, capturing $A$ at all cardinals of $\mathcal{P}$. Then we define the two-player game $G(A, \mathcal{P})$ of length at most $\omega$ as follows:
        \[
        \begin{array}{c|ccccc}
        \mathrm{I}    & \mathcal{T}_0 &        & \mathcal{T}_1 &        & \ldots\\ \hline
        \mathrm{II}   &        & b_0 &        & b_1 &
        \end{array}
        \]
        \begin{itemize}
            \item In the first round, Player I must play a $\Gamma$-maximal $\Gamma$-guided normal iteration tree $\mathcal{T}_0$ on $\mathcal{P}$.
            Then Player II must respond by playing a cofinal, wellfounded, non-dropping branch $b_0$ through $\mathcal{T}_0$ such that $\mathcal{T}_0{}^{\frown} b_0$ respects $A$.
            \item For any $i<\omega$, in the $i+1$-st round, Player I must play a normal, maximal $\Gamma$-guided tree $\mathcal{T}_{i+1}$ on $M^{\mathcal{T}_i}_{b_i}$ and Player II must play a cofinal, wellfounded, non-dropping branch $b_{i+1}$ through $\mathcal{T}_{i+1}$ such that $\mathcal{T}_{i+1}{}^{\frown} b_{i+1}$ respects $A$.
        \end{itemize}
        The first player who violates the rules will lose. Player II wins if and only if the game continues $\omega$ many rounds.
        
        We say that $\mathcal{P}$ is \emph{almost $A$-iterable} (with respect to $\Gamma$) if one of the following holds:
        \begin{enumerate}
            \item $\mathcal{P}$ has successor type and Player II has a winning strategy in $G(A, \mathcal{P})$.
            \item $\mathcal{P}$ has limit type and for any $\alpha<\lambda^{\mathcal{P}}$, if $\alpha$ is either 0 or a successor, then Player II has a winning strategy in $G(A, \mathcal{P}(\alpha))$.
        \end{enumerate}
    \end{dfn}
    
    \begin{dfn}\label{gamma_and_H}
        Let $A\subset\mathbb{R}$ and $\mathcal{P}$ be a $\Gamma$-suitable $y$-premouse for some $y\in\mathbb{R}$, capturing $A$ at all cardinals of $\mathcal{P}$. Let $\alpha<\lambda^{\mathcal{P}}$ be either 0 or a successor ordinal. Then we define
        \begin{align*}
            \gamma_{A, \alpha}^{\mathcal{P}} &= \sup(\mathrm{Hull}^{\mathcal{P}}_1(\{\tau^{\mathcal{P}}_{A, (\delta_{\alpha}^{+i})^{\mathcal{P}}}\mid i<\omega\})\cap\delta_{\alpha}^{\mathcal{P}}),\\
            H_{A, \alpha}^{\mathcal{P}} &= \mathrm{Hull}^{\mathcal{P}}_1(\gamma_{A, \alpha}^{\mathcal{P}}\cup\{\tau^{\mathcal{P}}_{A, (\delta_{\alpha}^{+i})^{\mathcal{P}}}\mid i<\omega\}).
        \end{align*}
        Here, the hulls are uncollapsed $\Sigma_1$ hulls in the language of $y$-premice.
    \end{dfn}
    
    \begin{dfn}[{\cite[Definition 1.11]{scales_at_weak_gap}}]\label{local_A-iterability}
        Let $A\subset\mathbb{R}$ and $\mathcal{P}$ be a $\Gamma$-suitable $y$-premouse for some $y\in\mathbb{R}$, capturing $A$ at all cardinals of $\mathcal{P}$. Let $\alpha<\lambda^{\mathcal{P}}$ be either 0 or a successor ordinal.
        \begin{enumerate}
            \item A finite sequence $s=\langle(\mathcal{P}_i, \mathcal{T}_i, \pi_i)\mid i<n\rangle$ is \emph{$A$-good at $\alpha$} (with respect to $\Gamma$) if $\mathcal{P}_0 = \mathcal{P}$ and the following holds for all $i<n$:
            \begin{enumerate}
                \item $\mathcal{P}_i$ is almost $A$-iterable,
                \item $\mathcal{T}_i$ is a normal iteration tree on $\mathcal{P}_i$ with last model $\mathcal{P}_{i+1}$ that is based on the window $(\delta_{\alpha-1}^{\mathcal{P}_i}, \delta_{\alpha}^{\mathcal{P}_i})$,\footnote{Namely, $\mathcal{T}_i$ is based on $\mathcal{P}_i\vert\delta_{\alpha}^{\mathcal{P}_i}$ and all extenders used in $\mathcal{T}_i$ have critical points above $\delta_{\alpha-1}^{\mathcal{P}_i}$.}
                \item either $\mathcal{T}_i$ is $\Gamma$-guided, or $\mathcal{T}_i$ is of the form $\mathcal{T}_i^{-}{}^{\frown} b_i$ for some $\Gamma$-maximal $\Gamma$-guided iteration tree $\mathcal{T}_i^{-}$ and a cofinal branch $b_i$ through $\mathcal{T}_i^{-}$,
                \item $\mathcal{T}_i$ respects $A$, and
                \item the main branch $b_i$ through $\mathcal{T}_i$ does not drop, and $\pi_i\colon\mathcal{P}_i\to\mathcal{P}_{i+1}$ is the iteration embedding along $b_i$.
            \end{enumerate}
        
            \item We say that an $A$-good sequence $s$ at $\alpha$ gives rise to $\pi\colon\mathcal{P}\to\mathcal{Q}$ if $\mathcal{Q} = \mathcal{P}_n$ and $\pi = \pi_{n-1}\circ\cdots\circ\pi_0$.
        
            \item $\mathcal{P}$ is \emph{locally $A$-iterable at $\alpha$} (with respect to $\Gamma$) if whenever $s$ and $t$ are $A$-good at $\alpha$ and give rise to $\pi\colon\mathcal{P}\to\mathcal{Q}$ and $\sigma\colon\mathcal{P}\to\mathcal{R}$ respectively and $\mathcal{Q}(\alpha) = \mathcal{R}(\alpha)$, then $\pi\upharpoonright H^{\mathcal{P}}_{A, \alpha} = \sigma\upharpoonright H^{\mathcal{P}}_{A, \alpha}$.
        \end{enumerate}
    \end{dfn}

     \begin{dfn}[{\cite[Definition 1.12]{scales_at_weak_gap}}]\label{A-iterability}
        Let $A\subset\mathbb{R}$ and $\mathcal{P}$ be a $\Gamma$-suitable $y$-premouse for some $y\in\mathbb{R}$, capturing $A$ at all cardinals of $\mathcal{P}$. 
            We say that $\mathcal{P}$ is \emph{$A$-iterable} (with respect to $\Gamma$) if one of the following holds:
            \begin{enumerate}
                \item $\mathcal{P}$ has successor type and Player II has a winning strategy in $G(A, \mathcal{P})$ such that whenever $\mathcal{Q}$ is a non-dropping iterate of $\mathcal{P}$ according to the strategy, $\mathcal{Q}$ is locally $A$-iterable at all $\alpha<\lambda^{\mathcal{Q}}$.
                \item $\mathcal{P}$ has limit type and for any $\alpha<\lambda^{\mathcal{P}}$, if $\alpha$ is either 0 or a successor, then $\mathcal{P}(\alpha)$ is $A$-iterable in the sense of (1).
            \end{enumerate}
    \end{dfn}
    
    \begin{rem}
        Almost $A$-iterability is an analogue of $\vec{f}$-iterability in \cite{StW16}, while $A$-iterability defined above is analogous to strong $\vec{f}$-iterability in \cite{StW16}.
    \end{rem}
    
    From now, we write $\Gamma$ for the pointclass $\Sigma^2_1$ in $L(\mathbb{R}, \langle\mu_{\alpha}\mid\alpha<\omega_1\rangle)$. We use the Prikry-type forcing $\mathbb{P}_\Delta$ to generically add a $\Gamma$-suitable premouse of type $\Delta$ over $L(\mathbb{R}, \langle\mu_{\alpha}\mid\alpha<\omega_1\rangle)$. We need to introduce the relevant notation first. Let $a$ be a countable transitive set. Let $x\in\mathbb{R}$ be such that some real recursive in $x$ codes $a$. For any real $z$, we write $\mathcal{P}_z$ for the $a$-premouse coded by $z$. Let $\mathcal{F}^x_a$ be the direct limit system that consists of $\mathcal{P}_z$ for some $z\leq_T x$ that is a $\Gamma$-short tree iterable $\Gamma$-suitable $a$-premouse with $\lambda^{\mathcal{P}_z} = 1$.
    We define $\mathcal{Q}_{a}^{x, -}$ to be the direct limit of the simultaneous comparison and $\{y\in\mathbb{R}\mid y\leq_{T}x\}$-genericity iteration of all $a$-premice in $\mathcal{F}^x_a$. Let $\mathcal{Q}_a^x = \mathrm{Lp}^{\Gamma}_+(\mathcal{Q}_a^{x, -})$ and let $\delta_a^x$ be the largest cardinal of $\mathcal{Q}^x_a$. As $\mathcal{Q}_a^x$ only depends on $a$ and the Turing degree of $x$, we also write $\mathcal{Q}_a^d$ for a Turing degree $d$.
    
    We fix $a$ as above and a sequence $\vec{d}=\langle d_i\mid i<\omega\rangle$ that is $\mathbb{P}_{\Delta}$-generic over $L(\mathbb{R}, \mu_{<\omega_1})$. For each $i<\omega$, let $\alpha_i$ be such that $d_i\in\mathbb{D}_{\alpha_i}$. We define a sequence $\langle\mathcal{Q}^i_{\beta}\mid i<\omega\land\beta\leq\omega^{1+\alpha_i}\rangle$ as follows:
    \begin{align*}
        Q^0_0 &= \mathcal{Q}_a^{d_0(0)},\\
        Q^0_{\beta+1} &= \mathcal{Q}_{Q^0_{\beta}}^{d_0(\beta+1)},\\ 
        Q^0_{\gamma} &= \bigcup_{\beta<\gamma}\mathcal{Q}^0_{\beta}\;\;\;\text{if $\gamma\leq\omega^{1+\alpha_0}$ is limit,}
    \end{align*}
    and for $i<\omega$,
    \begin{align*}
        Q^{i+1}_0 &= \mathcal{Q}_{\mathcal{Q}^1_{\omega^{1+\alpha_i}}}^{d_{i+1}(0)},\\
        Q^{i+1}_{\beta+1} &= \mathcal{Q}_{Q^{i+1}_{\beta}}^{d_{i+1}(\beta+1)},\\
        Q^{i+1}_{\gamma} &= \bigcup_{\beta<\gamma}\mathcal{Q}^{i+1}_{\beta}\;\;\;\text{if $\gamma\leq\omega^{1+\alpha_{i+1}}$ is limit.}
    \end{align*}
    Then we define
    \[
    M^{\vec{d}}(a) = \bigcup\{\mathcal{Q}^i_{\beta}\mid i<\omega \land \beta\leq\omega^{1+\alpha_i}\}.
    \]
    Also, for any $i<\omega$ and $\beta\leq\omega^{1+\alpha_i}$, we define
    \[
    \delta^i_{\beta} = \begin{cases}
        \text{the largest cardinal of }\mathcal{Q}^i_{\beta} & \text{if $\beta$ is either 0 or successor},\\
        \ord\cap\mathcal{Q}^i_{\beta} & \text{if $\beta$ is limit}.
    \end{cases}
    \]
    The following proposition can be shown in an analogous way as the proofs of \cite[Subsection 6.6]{StW16} and \cite[Lemma 2.6]{Tr15}. Here, one needs \cref{mouse_capturing}, in particular, $\mathsf{MC}$ in $L(\mathbb{R}, \mu_{<\omega_1})$.
    
    \begin{prop}
        Suppose that
        \[
        L(\mathbb{R}, \mu_{<\omega_1})\models\mathsf{AD} + \forall\alpha<\omega_1\,(\mu_{\alpha}\text{ is an ultrafilter on }[\power_{\omega_1}(\mathbb{R})]^{\omega^\alpha})
        \]
        and let $\Gamma$ be the pointclass $\Sigma^2_1$ in $L(\mathbb{R}, \mu_{<\omega_1})$. Let $a$ be countable transitive. Then for any sequence $\vec{d}$ that is $\mathbb{P}_{\Delta}$-generic over $L(\mathbb{R}, \mu_{<\omega_1})$, $M^{\vec{d}}(a)$ is a $\Gamma$-suitable premouse of type $\Delta$.
    \end{prop}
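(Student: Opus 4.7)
The plan is to verify the four clauses of $\Gamma$-suitability (see \cref{suitable_pm}) together with the type $\Delta$ condition for $M^{\vec{d}}(a)$ by a simultaneous induction on the block index $i<\omega$ and the internal stage $\beta\leq\omega^{1+\alpha_i}$. At each step I would show that the intermediate $\mathcal{Q}^i_\beta$ is itself $\Gamma$-suitable of limit or successor type. The base $\mathcal{Q}^0_0=\mathcal{Q}_a^{d_0(0)}$ is $\Gamma$-suitable with $\lambda=1$ directly from the definition of $\mathcal{Q}_a^x$ as the direct limit of a comparison-and-genericity iteration followed by an $\mathrm{Lp}^\Gamma_+$-closure; the successor step $\mathcal{Q}^i_\beta\to\mathcal{Q}^i_{\beta+1}=\mathcal{Q}_{\mathcal{Q}^i_\beta}^{d_i(\beta+1)}$ appends exactly one Woodin with an $\mathrm{Lp}^\Gamma_+$-cap above; limit stages $\mathcal{Q}^i_\gamma=\bigcup_{\beta<\gamma}\mathcal{Q}^i_\beta$ produce a limit of Woodins at the top; and the block-transition $\mathcal{Q}^i_{\omega^{1+\alpha_i}}\to\mathcal{Q}^{i+1}_0$ is another $\mathcal{Q}$-operator step.

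Clauses (1) and (2) of $\Gamma$-suitability then follow in a standard fashion. Clause (1), the strong-cutpoint agreement $\mathcal{P}|(\eta^+)^{\mathcal{P}}=\mathrm{Lp}^\Gamma_+(\mathcal{P}|\eta)$, is built directly into the $\mathcal{Q}$-operator. For clause (2), correctness of $C_\Gamma$ at non-Woodin levels, I would invoke $\mathsf{MC}$ in $L(\mathbb{R},\mu_{<\omega_1})$ from \cref{mouse_capturing}: any $\eta$ that is not a Woodin cardinal is witnessed non-Woodin by a sound, projecting, $\omega_1$-iterable mouse over $\mathcal{P}|\eta$, and $\mathsf{MC}$ places this mouse inside $C_\Gamma(\mathcal{P}|\eta)$. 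Smallness, clause (4), follows by a straightforward induction on the global enumeration using $\delta^0_0\geq\omega$ and the fact that every $\mathcal{Q}$-operator step adds strictly more ordinals than it consumes in the index.

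The crux is clause (3) together with the type $\Delta$ equality, namely that the order type $\lambda^M$ of Woodin cardinals and their limits in $M^{\vec{d}}(a)$ equals the ordinal height $M^{\vec{d}}(a)\cap\mathrm{Ord}$. Block $i$ contributes exactly $\omega^{1+\alpha_i}+1$ many ordinals to the global enumeration of Woodins-and-limits (Woodins at indices $\beta$ equal to $0$ or a successor, and a limit of Woodins at each limit $\beta\leq\omega^{1+\alpha_i}$). Since the $\alpha_i$'s are strictly increasing by the definition of a $\mathbb{D}$-sequence, ordinal arithmetic collapses the cumulative total after block $n$ to $\omega^{1+\alpha_n}+1$, and by the genericity fact $\sup_{i<\omega}\alpha_i=\omega_1^V$ recorded in the excerpt we obtain $\lambda^M=\sup_n\omega^{1+\alpha_n}$. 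For the ordinal height, smallness already yields $\delta^i_{\omega^{1+\alpha_i}}\geq\omega^{1+\alpha_i}$, and the matching upper bound $\delta^i_{\omega^{1+\alpha_i}}<\omega^{1+\alpha_{i+1}}$ must be established by controlling the $\mathrm{Lp}^\Gamma_+$-gap added at each $\mathcal{Q}$-operator step; taking suprema then gives $M^{\vec{d}}(a)\cap\mathrm{Ord}=\sup_n\omega^{1+\alpha_n}=\lambda^M$, which is precisely type $\Delta$.

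The main obstacle I expect is this last upper-bound calculation, i.e.\ showing that the cumulative $\mathrm{Lp}^\Gamma_+$-gaps within block $i$ do not push its height past $\omega^{1+\alpha_{i+1}}$. This is the analogue of the corresponding height-control arguments for Prikry-like constructions in \cite[Subsection 6.6]{StW16} and \cite[Lemma 2.6]{Tr15}, and the adaptation to the diagonal generic $\vec{d}$ should go through by combining the Mathias property of $\mathbb{P}_\Delta$ (\cref{Mathias_condition}) with the fact that each $\mathcal{Q}$-operator step is locally definable from the single Turing degree appearing in the relevant component of $\vec{d}$.
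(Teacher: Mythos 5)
There is a genuine gap, and it sits exactly where you wave it through. Your induction treats each application of the $\mathcal{Q}$-operator as ``appending exactly one Woodin with an $\mathrm{Lp}^{\Gamma}_+$-cap above,'' but that is the entire mathematical content of the proposition, not a consequence of the definitions. One must show: that the system $\mathcal{F}^x_a$ is nonempty and the simultaneous comparison and genericity iteration converges to a wellfounded, $\Gamma$-short tree iterable, $\Gamma$-suitable direct limit; that $\delta^x_a$ is still Woodin after taking $\mathrm{Lp}^{\Gamma}_+$ (fullness of the direct limit); and, crucially, that every earlier $\delta^i_\beta$ remains Woodin when the next degree's $\mathcal{Q}$-operator is applied over $\mathcal{Q}^i_\beta$, at block transitions, and in the final union $M^{\vec{d}}(a)$ --- likewise that clauses (1) and (2) of \cref{suitable_pm} persist there. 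This is precisely where one uses $\mathsf{MC}$ (\cref{mouse_capturing}) together with the fact that $\vec{d}$ is $\mathbb{P}_\Delta$-generic, i.e.\ that its entries land in the measure-one sets of the ultrafilters $\nu_\alpha$ manufactured from the Solovay filters; these are the arguments of \cite[Subsection 6.6]{StW16} and \cite[Lemma 2.6, Theorem 3.13]{Tr15} that the paper's proof consists of citing. Your outline invokes genericity only through the Mathias property (\cref{Mathias_condition}) and only for height control, which is not where genericity is needed, so the heart of the proof is missing.

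Conversely, the step you flag as the crux --- the bound $\delta^i_{\omega^{1+\alpha_i}}<\omega^{1+\alpha_{i+1}}$ --- is neither needed nor clearly true. For type $\Delta$ it suffices that each block has countable height in $L(\mathbb{R}, \mu_{<\omega_1})$: a block is a union of countably many countable premice and $\omega_1$ is regular under $\mathsf{AD}$, so $M^{\vec{d}}(a)\cap\mathrm{Ord}=\sup_i\mathrm{ht}(\mathcal{Q}^i_{\omega^{1+\alpha_i}})$, which is $\omega_1^V$ because the blocks contain Woodins of order type cofinal in $\omega_1^V$; your (correct) order-type computation together with the genericity fact $\sup_i\alpha_i=\omega_1^V$ gives $\lambda^{M^{\vec{d}}(a)}=\sup_i\omega^{1+\alpha_i}=\omega_1^V$ as well, so no control of the $\mathrm{Lp}^{\Gamma}_+$-gaps against $\omega^{1+\alpha_{i+1}}$ is required. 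In short, the ordinal bookkeeping is fine (and easier than you make it), but the proposal omits the preservation-of-suitability arguments for $\nu_\alpha$-almost-all degree sequences, which is what the paper's citation is carrying.
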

    
    \subsection{Main arguments}

    Since variants of the following lemma were proved in many papers (e.g.\ \cite[Section 3]{Tr15}, \cite[Section 7]{StW16}, \cite[Lemma 1.12.1]{scales_at_weak_gap} and \cite[Lemma 5.38]{scales_in_hybrid_mice}), we only give a proof outline here.

    \begin{lem}\label{capturing_OD_sets}
        Assume that
        \[
        L(\mathbb{R}, \mu_{<\omega_1})\models\mathsf{AD} + \forall\alpha<\omega_1\,(\mu_{\alpha}\text{ is an ultrafilter on }[\power_{\omega_1}(\mathbb{R})]^{\omega^\alpha}).
        \]
        Let $\Gamma$ be the pointclass $\Sigma^2_1$ in $L(\mathbb{R}, \mu_{<\omega_1})$. Let $A\subset\mathbb{R}$ be ordinal definable from some $x\in\mathbb{R}$ in $L(\mathbb{R}, \mu_{<\omega_1})$. Then for a cone of $y\geq_T x$, there is an $A$-iterable $\Gamma$-suitable $y$-premouse of type $\Delta$.
    \end{lem}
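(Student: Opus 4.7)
The plan is to generically construct, for cone-many $y\geq_T x$, a $\Gamma$-suitable $y$-premouse $\mathcal{P}$ of type $\Delta$ by means of the Prikry-like forcing $\mathbb{P}_\Delta$ from \cref{subsection:Prikry-like_forcing}, and then deduce $A$-iterability of $\mathcal{P}$ from $A$-iterability of each of its natural initial segments $\mathcal{P}(\alpha)$. Each $\mathcal{P}(\alpha)$ is a $\Gamma$-suitable premouse of successor type, so they fall within the scope of the Trang--Woodin arguments already carried out for the shorter generalized Solovay models $L(\mathbb{R},\mu_{<\alpha+1})$; our job is to glue this together uniformly in $\alpha$.

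First, fix $\vec{d}=\langle d_i\mid i<\omega\rangle$ that is $\mathbb{P}_\Delta$-generic over $L(\mathbb{R},\mu_{<\omega_1})$. For any countable transitive $a$ recursive in $x$, let $\mathcal{P}_a:=M^{\vec{d}}(a)$, which by the proposition preceding this lemma is a $\Gamma$-suitable premouse of type $\Delta$. Taking $a=\{y\}$ for $y$ in a cone above $x$, set $\mathcal{P}_y:=\mathcal{P}_{\{y\}}$. Using $\mathsf{MC}$ from \cref{mouse_capturing} together with the standard term-capturing argument, one verifies that $\mathcal{P}_y$ captures $A$ at every cardinal via canonical standard names $\tau^{\mathcal{P}_y}_{A,\nu}$, so the definitions of \cref{tree_respects_s,normally_A-iterability,almost_A-iterability,local_A-iterability,A-iterability} apply. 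By \cref{A-iterability}(2), it suffices to show that $\mathcal{P}_y(\alpha)$ is $A$-iterable in the successor-type sense for each $\alpha<\lambda^{\mathcal{P}_y}$ with $\alpha$ either $0$ or a successor ordinal.

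For each such $\alpha$, the model $\mathcal{P}_y(\alpha)$ is computed from an initial segment of $\vec{d}$ and has countable height $\alpha+1$. By the Mathias property \cref{Mathias_condition}, this initial segment may be viewed as the beginning of a generic object for the Prikry-like forcing $\mathbb{P}_{\alpha+1}$. Almost $A$-iterability of $\mathcal{P}_y(\alpha)$ is then obtained by playing out $G(A,\mathcal{P}_y(\alpha))$ against a further tail of $\vec{d}$: at each $\Gamma$-maximal tree one reads off the branch realized in the final model of a tail-generic construction, exactly as in Trang's thesis. Local $A$-iterability at each $\beta\leq\alpha$ follows from the resulting stationarity of the direct-limit system of $\Gamma$-suitable iterates, together with the agreement of the hulls $H^{\mathcal{P}_y(\alpha)}_{A,\beta}$ from \cref{gamma_and_H}.

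The main obstacle is coherence across levels: the strategies witnessing almost $A$-iterability at the various $\mathcal{P}_y(\alpha)$ must come from a single object living in $L(\mathbb{R},\mu_{<\omega_1})$ rather than in a particular generalized Solovay model of countable length, and local $A$-iterability must be verifiable using only information inside $L(\mathbb{R},\mu_{<\omega_1})$. The key device here is $\Sigma_1$-reflection \cref{Sigma_1_reflection}: the conjunction ``$\mathcal{P}_y(\alpha)$ is almost $A$-iterable and locally $A$-iterable at every $\beta\leq\alpha$'' is $\Sigma_1$ over $\langle L_{\delta^2_1}(\mathbb{R},\mu_{<\omega_1}),\in,\mu_{<\omega_1}\rangle$ in the parameters $\mathcal{P}_y(\alpha),A,\alpha$. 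Its truth is witnessed inside a ground model $N$ produced by \cref{reversed_dervied_model_theorem} (with $\lambda^N$ Woodin cardinals arranged so that $\alpha<\lambda^N$), where the required iteration strategy exists by Neeman's methods in the style of \cref{LGD_from_LC}. Reflecting this $\Sigma_1$ statement back down to $L_{\delta^2_1}(\mathbb{R},\mu_{<\omega_1})$ gives the desired $A$-iterability of each $\mathcal{P}_y(\alpha)$, and hence of $\mathcal{P}_y$.
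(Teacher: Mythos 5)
There is a genuine gap, and it sits exactly at the heart of the lemma: nothing in your argument actually produces iteration strategies that \emph{respect} $A$, i.e.\ that move the term relations $\tau^{\mathcal{P}}_{A,\nu}$ correctly (\cref{tree_respects_s}). Your construction of $\mathcal{P}_y=M^{\vec d}(\{y\})$ only uses $\Gamma$-short-tree iterability of the suitable premice entering the direct limit system; the assertion that one can then ``read off the branch realized in the final model of a tail-generic construction'' at $\Gamma$-maximal trees, and that these branches respect $A$, is precisely the content that has to be proved, and neither the Mathias property of $\mathbb{P}_\Delta$ nor \cref{mouse_capturing} supplies it. (Even the preliminary claim that $\mathcal{P}_y$ captures $A$ at all of its cardinals is not a consequence of $\mathsf{MC}$ alone for an arbitrary $\mathrm{OD}(x)$ set $A$.) The appeal to \cref{Sigma_1_reflection} cannot repair this: that theorem relates $L_{\delta^2_1}(\mathbb{R},\mu_{<\omega_1})$ to $L(\mathbb{R},\mu_{<\omega_1})$, whereas your purported witness lives in a model $N$ of $\mathsf{ZFC}$ inside a $\mathbb{P}_\Delta$-generic extension given by \cref{reversed_dervied_model_theorem}; there is no transfer mechanism from $N$ back into the Solovay model, and inside $N$ the existence of an $A$-respecting strategy for $\mathcal{P}_y(\alpha)$ is not something ``Neeman's methods'' in the style of \cref{LGD_from_LC} provide (those prove determinacy from iterable mice, not $A$-iterability of derived suitable premice). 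There is also an ambient-model problem: $M^{\vec d}(a)$ lives in $L(\mathbb{R},\mu_{<\omega_1})[\vec d]$, so even granting everything else you would still owe an argument that the conclusion holds in $L(\mathbb{R},\mu_{<\omega_1})$ itself.

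The paper's proof runs in the opposite direction and through different machinery: one assumes for contradiction that some $\mathrm{OD}(x)$ set is $\Gamma$-bad, uses \cref{Sigma_1_reflection} to pass to the \emph{least} level $M\vert\xi$ of the Solovay model at the end of a weak gap where a $\overline{\Gamma}$-bad set $A$ is ordinal definable, takes a self-justifying system sealing the envelope of $\overline{\Gamma}$ with $A_0=A$, Suslin-co-Suslin captures it by a coarse mouse $\mathcal{N}^*_z$, and runs layered fully backgrounded constructions below the least ${<}\delta_z$-strong cardinal to produce a $\overline{\Gamma}$-suitable premouse of type $\Delta$. The decisive step (\cref{claim_on_derived_models,nonbadness_of_A}) is that a level $\mathcal{W}$ of $\mathrm{Lp}^{\Gamma}$ over this construction has the reflected level as its derived $\Delta$-Solovay model, so that genericity iterations plus elementarity show the pullback strategy moves the term relations for $A$ correctly; this contradicts badness. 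Your proposal replaces this core-model-induction argument with the Prikry-generic direct limit and a reflection step, but the former presupposes, and the latter does not deliver, the $A$-respecting branches; note also that the limit-of-branches construction that turns $\vec A$-iterability into an actual strategy is carried out only later, in \cref{limit_branch_construction}, and itself depends on the present lemma.
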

    \begin{proof}
        We denote the structure $\langle L(\mathbb{R}, \mu_{<\omega_1}), \in, \mu_{<\omega_1}\rangle$ by $M$ and for any ordinal $\alpha$, $M\vert\alpha$ denotes $\langle L_\alpha(\mathbb{R}, \mu_{<\omega_1}), \in, \mu_{<\omega_1}\cap L_\alpha(\mathbb{R}, \mu_{<\omega_1})\rangle$. Throughout the proof, we work in $M$. Note that $M\models\mathsf{AD}^+ + \mathsf{DC} + \mathsf{MC}$.
        
        We say that $A\subset\mathbb{R}$ is \emph{$\Gamma$-bad} if it is a counterexample to the statement of \cref{capturing_OD_sets}, where $\Gamma = (\Sigma^2_1)^M$. Suppose toward a contradiction that there is a $\Gamma$-bad set of reals that is ordinal definable from a real $x$ (in $M$).
        Then there is $\xi>\Theta$ such that
        \begin{align*}
        M\vert\xi\models~ & \mathsf{ZF}^{-}+\mathsf{AD}^+ +\mathsf{DC} + \mathsf{MC} \\
        & + \text{``there is a $\Gamma$-bad sets of reals that is ordinal definable from $x$.''} 
        \end{align*}
        By $\Sigma_1$ reflection in $L(\mathbb{R}, \mu_{<\omega_1})$ (\cref{Sigma_1_reflection}), there is a $\xi<\delta^2_1$ and a pointclass $\overline{\Gamma}$ such that letting $[\alpha, \beta]$ be the $\Sigma_1$-gap of $M$ including $\xi$ and $\overline{\Gamma}=(\Sigma^2_1)^{M\vert\alpha}$, then
        \begin{align*}
        M\vert\xi\models~ & \mathsf{ZF}^{-}+\mathsf{AD}^+ + \mathsf{DC} + \mathsf{MC}\\
         & + \text{``there is a $\overline{\Gamma}$-bad sets of reals that is ordinal definable from $x$,''}
        \end{align*}
        where we define $\overline{\Gamma}$-badness in an analogous way. We may assume that $\xi$ is least such. Then $\beta = \xi+1$ and $[\alpha, \xi+1]$ is a (weak) $\Sigma_1$-gap of $M$. Fix $A\in\power(\mathbb{R})\cap M\vert\xi$ such that
        \[
        M\vert\xi\models\text{``$A$ is $\overline{\Gamma}$-bad and ordinal definable from $x$.''}
        \]
        Note that $A$ is truely $\overline{\Gamma}$-bad (in $M$) by the absoluteness of $A$-iterability. We will reach a contradiction by arguing that $A$ is not $\overline{\Gamma}$-bad.
    
        By the result in \cite{scales_in_K(R)}, there is a self-justifying system $\mathcal{A}=\langle A_i\mid i<\omega\rangle$ sealing the envelope of $\overline{\Gamma}$ such that $A_0 = A$. Note that each $A_i$ is in $M\vert\beta$. Let $z_0 \leq_{T} z_1 \leq_{T} z_2$ be reals satisfying the following:
        \begin{itemize}
            \item $x\leq_T z_0$ and in $M\vert\xi$, for any $z\geq_{T}z_0$, there is no $A$-iterable $\overline{\Gamma}$-suitable $z$-premouse of type $\Delta$. (We use $\mathsf{AD}$ in $M\vert\xi$ to find such a $z_0$.)
            \item For any $i<\omega$, there is $\zeta<\Theta^{M\vert\beta}$ such that $A_i$ is definable from $z_1$ over $M\vert\zeta$.
            \item Since $C_{\overline{\Gamma}}(z_1)\subsetneq C_{\Gamma}(z_1)$, $\mathsf{MC}$ (in $M$) implies that there is a $z_1$-premouse $\mathcal{M}\triangleleft\mathrm{Lp}^{\Gamma}(z_1)$ such that $\rho_{\omega}^{\mathcal{M}} = \omega$ and $\mathcal{M}\not\mathrel{\triangleleft}\mathrm{Lp}^{\overline{\Gamma}}(z_1)$.
            Let $\mathcal{M}$ be the least such initial segment and let $z_2\geq_T z_1$ code $\mathcal{M}$.
        \end{itemize}
        Furthermore, let $\Sigma_{\mathcal{M}}$ be an $(\omega, \omega_1)$-iteration strategy for $\mathcal{M}$ in $\Gamma$, which can be extended to $(\omega, \omega_1+1)$-iteration strategy since $\mathsf{AD}$ holds in $M$.
        Let $\Gamma'\subsetneq\Delta_{\Gamma}$ be a good pointclass such that $\Delta_{\overline{\Gamma}}\subset\Gamma'$ and $\mathcal{A}, \Sigma_{\mathcal{M}}\in \Delta_{\Gamma'}$.
        By \cite[Theorem 10.3]{St08}, there is a real $z \geq_{T} z_2$ such that $(\mathcal{N}^*_z, \delta_z, \Sigma_z)$ Suslin-co-Suslin captures $\mathcal{A}$ and the code of $\Sigma_{\mathcal{M}}$.
        Here, $\mathcal{N}^*_z$ is an iterable coarse structure with $z\in\mathcal{N}^*_z$, $\delta_z$ is the unique Woodin cardinal of $\mathcal{N}^*_z$, and $\Sigma_z \in \Delta_{\Gamma'}$ is an iteration strategy for $\mathcal{N}^*_z$.
        
        Let $\kappa_z$ be the least ${<}\delta_z$-strong cardinal of $\mathcal{N}^*_z$.
        In the proof of \cite[Theorem 3.13]{Tr15} and \cite[Lemma 2.4]{SarSt15}, it is shown that in $\mathcal{N}^*_z$, $\kappa_z$ is a limit of ordinals $\eta$ such that
        \[
        \mathrm{Lp}^{\overline{\Gamma}}(V_{\eta}^{\mathcal{N}^*_z})\models\text{``$\eta$ is a Woodin cardinal.''}
        \]
        Let $\langle\eta_i\mid i<\kappa_z\rangle$ be the increasing enumeration of all such ordinals $\eta$ and limits of these. Let $\lambda<\kappa_z$ be the least ordinal such that $\eta_{\xi} = \sup_{i<\xi}\eta_i$. Now we define a sequence $\langle\mathcal{N}_i\mid i<\lambda\rangle$ of premice as follows:
        \begin{itemize}
            \item $\mathcal{N}_0$ is the output of the maximal fully backgrounded construction in $V_{\eta_0}^{\mathcal{N}^*_z}$.
            \item for any $i<\lambda$, $\mathcal{N}_{i+1}$ is the output of the maximal fully backgrounded construction over $\mathcal{N}_i$ in $V_{\eta_{i+1}}^{\mathcal{N}^*_z}$ (using extenders with critical points above $\eta_i$).
            \item for any limit $i<\lambda$, $\mathcal{N}_i = \bigcup_{j<i}\mathcal{N}_j$.
        \end{itemize}
        Using the Prikry-type forcing $\mathbb{P}_{\Delta}$ introduced in the previous subsection, one can show analogously to the proof of \cite[Theorem 3.13]{Tr15} that $\mathcal{N}_{\lambda}$ is a $\overline{\Gamma}$-suitable premouse of type $\Delta$ as witnessed by $\langle\eta_i\mid i<\lambda\rangle$. Moreover, there is some $\mathcal{W}^*\triangleleft\mathrm{Lp}^{\Gamma}(\mathcal{N}_{\lambda})$ such that $\rho_{\omega}^{\mathcal{W}^*}<\mathcal{N}_{\lambda}\cap\mathrm{Ord}$. Let $\mathcal{W}^*$ be the least such initial segment. Then the proof of \cite[Claim 7.5]{StW16} gives the following.
    
        \begin{claim}\label{claim_on_derived_models}
            $\mathcal{W}^* = \mathcal{J}_1(\mathcal{W})$\footnote{This means that $\mathcal{W}^*$ is the rudimentary closure of $\mathcal{W}\cup\{\mathcal{W}\}$.} for some $\mathcal{W}$ such that the derived $\Delta$-Solovay model of $\mathcal{W}$ at some $\lambda$ satisfies ``$\mathsf{ZF}^{-}+$there is a $\overline{\Gamma}$-bad sets of reals'' and no proper initial segment of $\mathcal{W}$ has this property.
        \end{claim}
    
        Let $\mathcal{N}$ be the pointwise definable hull of $\mathcal{W}$ and let $\Lambda$ be its iteration strategy indcued by pulling back an iteration strategy for $\mathcal{W}$ in $\Gamma$. For a given premouse, its $\Delta$-suitable initial segment means the unique initial segment that is a $\Gamma$-suitable premouse of type $\Delta$.
    
        \begin{claim}\label{nonbadness_of_A}
            The $\Delta$-suitable initial segment $\mathcal{P}$ of $\mathcal{N}$ is $A$-iterable as witnessed by $\Lambda$.
        \end{claim}
        \begin{proof}
        We only show that any normal iteration tree based on $\mathcal{P}$ according to $\Lambda$ respects $A$ as the rest of the proof do not need any new idea.
        Suppose that $\mathcal{T}$ is a normal tree on $\mathcal{N}$ with last model $\mathcal{N}'$ and letting $i\colon\mathcal{N}\to\mathcal{N}'$ be the iteration map and $\mathcal{Q}$ be the $\Delta$-suitable initial segment of $\mathcal{N}'$,
        \[
        i(\tau^{\mathcal{P}}_{A, \nu})\neq \tau^{\mathcal{Q}}_{A, i(\nu)}
        \]
        for some cardinal $\nu$ of $\mathcal{P}$.
        Let $\pi\colon\overline{M}\to M\vert\xi$ be an elementary map such that $\overline{M}$ is countable transitive and $\mathcal{N}, \mathcal{T}, S, T\in\mathrm{ran}(\pi)$, where $S$ and $T$ are trees projecting to a universal $\check{\Gamma}$ set and its complement respectively.
        We may also assume that $A$ is ordinal definable from a real in $\overline{M}$.
        Now take $\mathbb{R}^{\overline{M}}$-genericity iterates of $\mathcal{N}$ and $\mathcal{N}'$ using Woodin cardinals above $\nu$ and $i(\nu)$ respectively. We can arrange that there is an elementary embedding $j\colon\mathcal{W}\to\mathcal{W}'$ between the last models $\mathcal{W}, \mathcal{W}'$ of the $\mathbb{R}^{\overline{M}}$-genericity iterations of $\mathcal{N}$ and $\mathcal{N}'$ (see the proof of \cite[Lemma 7.7]{StW16}).
        Then the derived $\Delta$-Solovay models of $\mathcal{W}$ and $\mathcal{W}'$ are $\overline{M}$ by \cref{claim_on_derived_models}, so the elementarity of $j$ implies that $j(\tau^{\mathcal{W}}_{A, \nu})=\tau^{\mathcal{W}'}_{A, j(\nu)}$.
        Together with the agreement between $i$ and $j$, it follows that
        \[
        i(\tau^{\mathcal{P}}_{A, \nu}) = j(\tau^{\mathcal{W}}_{A, \nu}) = \tau^{\mathcal{W}'}_{A, j(\nu)} = \tau^{\mathcal{Q}}_{A, i(\nu)},
        \]
        which contradicts the choice of $\mathcal{Q}$ and $\nu$.
        \end{proof}

        \cref{nonbadness_of_A} implies that $A$ is not $\overline{\Gamma}$-bad, which contradicts the choice of $A$. This completes the proof of \cref{capturing_OD_sets}.
    \end{proof}

    \begin{dfn}\label{vecA-iterability}
        Let $\mathcal{P}$ be a $\Gamma$-suitable $y$-premouse of type $\Delta$ for some $y\in\mathbb{R}$. We inductively define $\eta_i^{\mathcal{P}}$ for $i<\omega$ by $\eta_0^{\mathcal{P}} = \delta_0^{\mathcal{P}}$ and $\eta_{i+1}^{\mathcal{P}} = \delta_{\eta_i^{\mathcal{P}} + 1}^{\mathcal{P}}$. (Note that $\mathcal{P}\cap\ord = \sup_{i<\omega}\eta_i^{\mathcal{P}}$.) Also, let $\vec{A}=\langle A_i\mid i<\omega\rangle$ be a sequence of sets of reals. We say that $\mathcal{P}$ is \emph{$\vec{A}$-iterable} if for all $k<\omega$, $\mathcal{P}\vert\eta_k^{\mathcal{P}}$ is $\left(\bigoplus_{i<k}A_i\right)$-iterable.
    \end{dfn}

    The following is an easy consequence of \cref{capturing_OD_sets}:

    \begin{cor}\label{simultaneous_comparison}
        Assume that
        \[
        L(\mathbb{R}, \mu_{<\omega_1})\models\mathsf{AD} + \forall\alpha<\omega_1\,(\mu_{\alpha}\text{ is an ultrafilter on }[\power_{\omega_1}(\mathbb{R})]^{\omega^\alpha}).
        \]
        Let $\Gamma$ be the pointclass $\Sigma^2_1$ in $L(\mathbb{R}, \mu_{<\omega_1})$. Let $\vec{A}=\langle A_i\mid i<\omega\rangle$ be an $\omega$-sequence of sets of reals that are ordinal definable from $x\in\mathbb{R}$ in $M$. Then for a cone of $y\geq_T x$, there is an $\vec{A}$-iterable $\Gamma$-suitable $y$-premouse of type $\Delta$.
    \end{cor}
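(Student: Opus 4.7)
The plan is to reduce \cref{simultaneous_comparison} to a single application of \cref{capturing_OD_sets} by forming the infinite join of $\vec{A}$. Concretely, I would set
\[
B = \bigoplus_{i<\omega} A_i = \{\langle i, z\rangle\mid i<\omega,\, z\in A_i\},
\]
coded as a subset of $\mathbb{R}$ in a standard recursive way. Since $\vec{A}$ is ordinal definable from $x$ in $M$, so is $B$. Applying \cref{capturing_OD_sets} to $B$ then gives, on a cone of $y\geq_T x$, a $B$-iterable $\Gamma$-suitable $y$-premouse $\mathcal{P}$ of type $\Delta$.

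The next step is to argue that any such $\mathcal{P}$ is automatically $\vec{A}$-iterable in the sense of \cref{vecA-iterability}. The key observation is that for each $k<\omega$, the finite join $\bigoplus_{i<k}A_i$ is recursive in $B$ via a fixed Turing reduction independent of $\mathcal{P}$. This yields a uniform transfer of all the structure demanded by $A$-iterability: capturing $B$ at a cardinal $\nu$ via $\tau^{\mathcal{P}}_{B,\nu}$ automatically yields a capture of $\bigoplus_{i<k}A_i$ at $\nu$ through a name definable in a uniform way from $\tau^{\mathcal{P}}_{B,\nu}$; any iteration tree that respects $B$ also respects $\bigoplus_{i<k}A_i$, because iteration embeddings commute with the fixed recursion and hence send the finite-join names to their correct images; and a winning strategy for Player II in the game $G(B,\mathcal{P}(\alpha))$ translates directly into a winning strategy in the analogous game for $\bigoplus_{i<k}A_i$ at the $\eta_k^{\mathcal{P}}$-level. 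Local $(\bigoplus_{i<k}A_i)$-iterability at each level follows from local $B$-iterability since the hulls $H^{\mathcal{P}}_{B,\alpha}$ already contain the relevant finite-join term names.

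The only real technical point is bookkeeping: aligning the type $\Delta$ structure of $\mathcal{P}$ with the successor-type initial segments $\mathcal{P}\vert\eta_k^{\mathcal{P}}$ that appear in the definition of $\vec{A}$-iterability. Selecting the correct successor ordinals $\alpha_k$ with $\delta^{\mathcal{P}}_{\alpha_k}=\eta_k^{\mathcal{P}}$ and verifying that the transfer of strategies from the $B$-iterability of $\mathcal{P}(\alpha_k)$ respects the window structure used in Definitions \ref{local_A-iterability} and \ref{A-iterability} is where the care is required, but no conceptual input beyond \cref{capturing_OD_sets} is needed.
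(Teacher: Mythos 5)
Your reduction to a single application of \cref{capturing_OD_sets} via the infinite join $B=\bigoplus_{i<\omega}A_i$ has two genuine gaps. First, the hypothesis of \cref{simultaneous_comparison} is only that each $A_i$ is ordinal definable from $x$ in $M$, not that the sequence $\vec{A}$ is; consequently $B$ need not be ordinal definable from $x$ (indeed, if $\vec{A}$ is chosen externally it need not even be an element of $L(\mathbb{R},\mu_{<\omega_1})$), so \cref{capturing_OD_sets} cannot simply be applied to $B$, whereas each finite join $\bigoplus_{i<k}A_i$ \emph{is} OD from $x$. Second, and more seriously, the transfer ``$B$-iterable $\Rightarrow$ $\bigl(\bigoplus_{i<k}A_i\bigr)$-iterable'' is not justified. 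The parts you describe do transfer: since each finite join is uniformly recursive in $B$, the standard names $\tau^{\mathcal{P}}_{\bigoplus_{i<k}A_i,\nu}$ are definable in $\mathcal{P}$ from $\tau^{\mathcal{P}}_{B,\nu}$, so trees and branches respecting $B$ respect every finite join, and Player II's strategies in $G(B,\mathcal{P}(\alpha))$ induce strategies in the corresponding games for the finite joins. But $A$-iterability in the sense of \cref{A-iterability} also demands \emph{local} $A$-iterability (\cref{local_A-iterability}), which quantifies over all $A$-good sequences: stacks passing through premice that are merely almost $\bigl(\bigoplus_{i<k}A_i\bigr)$-iterable, via trees and branches that respect only $\bigoplus_{i<k}A_i$ and may move $\tau_B$ incorrectly. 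Such sequences need not be $B$-good, so local $B$-iterability of $\mathcal{P}$ imposes no constraint on the two embeddings that must be compared. Your one-line justification addresses the inclusion $H^{\mathcal{P}}_{\bigoplus_{i<k}A_i,\alpha}\subseteq H^{\mathcal{P}}_{B,\alpha}$, which is the easy direction, and not this quantifier mismatch; this is exactly the delicate point for which the literature proves iterability statements directly on a cone rather than transferring them from a join.

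The intended derivation (the paper leaves it as an easy consequence, and its label hints at it) avoids both problems: apply \cref{capturing_OD_sets} separately to each finite join $\bigoplus_{i<k}A_i$, which is OD from $x$, obtaining countably many cones; intersect them (using countable completeness of the cone filter under $\mathsf{AD}$ in $M$, or choice in $V$); and for $y$ in the intersection perform a simultaneous pseudo-comparison of the witnessing premice $\mathcal{P}_k$. Since $A$-iterability is preserved under non-dropping (pseudo-)iterations by the witnessing strategies, the common iterate is $\bigl(\bigoplus_{i<k}A_i\bigr)$-iterable for every $k$ simultaneously, and cutting it at the ordinals $\eta_k$ yields $\vec{A}$-iterability in the sense of \cref{vecA-iterability}. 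If you want to salvage your route, you would need both to strengthen the hypothesis so that $B$ is available (e.g.\ assume $\vec{A}$ itself is OD from $x$ in $M$) and to prove the monotonicity of $A$-iterability under join, neither of which is in your write-up.
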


    The next lemma follows from the argument in \cite{scales_in_K(R)}.

    \begin{lem}\label{existence_of_sjs}
        Assume that
        \[
        L(\mathbb{R}, \mu_{<\omega_1})\models\mathsf{AD} + \forall\alpha<\omega_1\,(\mu_{\alpha}\text{ is an ultrafilter on }[\power_{\omega_1}(\mathbb{R})]^{\omega^\alpha})
        \]
        and the sharp for $L(\mathbb{R}, \mu_{<\omega_1})$ exists. Then $\power(\mathbb{R})\cap L(\mathbb{R}, \mu_{<\omega_1})$ is weakly scaled.
    \end{lem}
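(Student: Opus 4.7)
The plan is to adapt the scale analysis of Steel's \emph{Scales in $K(\mathbb{R})$} (cited here as \cite{scales_in_K(R)}) to the model $L(\mathbb{R}, \mu_{<\omega_1})$. Under our hypotheses, this model satisfies $\mathsf{AD}^+ + \mathsf{MC}$ by \cref{mouse_capturing}, and the existence of its sharp yields a nontrivial elementary embedding $j\colon L(\mathbb{R}, \mu_{<\omega_1}) \to L(\mathbb{R}, \mu_{<\omega_1})$ with $\crit(j) > \Theta^{L(\mathbb{R}, \mu_{<\omega_1})}$ (in the sense of \cref{dfn:sharp}). Iterating $j$ in the usual way gives a proper class of uniform indiscernibles for $L(\mathbb{R}, \mu_{<\omega_1})$, which is exactly what the $K(\mathbb{R})$-style scale construction needs as its ``external'' input.

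The proof would proceed by induction on the $\Sigma_1$-gap structure of $L(\mathbb{R}, \mu_{<\omega_1})$, where $\Sigma_1$ is taken in the language augmented by the predicate for $\mu_{<\omega_1}$. First I would transfer scales across the interior of a gap using Moschovakis' second periodicity, which is available because $L(\mathbb{R}, \mu_{<\omega_1}) \models \mathsf{AD}$. At the end of each $\Sigma_1$-gap $[\alpha, \beta]$, where the scale property must be established from scratch, the plan is to produce a self-justifying system sealing the envelope of $(\Sigma^2_1)^{L_\alpha(\mathbb{R}, \mu_{<\omega_1})}$ by encoding, for each $\Sigma_1$-complete set $A$ and each real $x$, a countable ordinal that records the Silver indiscernibles furnished by the sharp witnessing $\Sigma_1$-reflection of the defining formula of $A$ at $x$. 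This is exactly the construction of \cite[Sections~3--4]{scales_in_K(R)}, and the output is a sequence $\vec{A} = \langle A_i\mid i<\omega\rangle$ with $A_0 = A$ each of whose components admits a scale in $L(\mathbb{R}, \mu_{<\omega_1})$ and whose norms code each other.

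The main obstacle is checking compatibility with the predicate $\mu_{<\omega_1}$, which is not present in Steel's original setting and which changes the shape of $\Sigma_1$ formulas. Here I would invoke the structural work of \cref{subsection:Prikry-like_forcing}: the ultrafilter property of each $\mu_\alpha$ and the reversed derived model theorem \cref{reversed_dervied_model_theorem} imply that $\mu_{<\omega_1}$ is reconstructed from the derived-model analysis of a suitable class premouse, so membership in $\mu_{<\omega_1}$ reduces uniformly to a $\Sigma_1$-statement about countable approximations that the sharp can reflect. Combined with the $\Sigma_1$-reflection principle \cref{Sigma_1_reflection}, this ensures that the indiscernible-based scale construction respects the predicate and that the self-justifying systems live inside $L(\mathbb{R}, \mu_{<\omega_1})$. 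Once this compatibility is verified, the remainder of the $K(\mathbb{R})$ argument (the closure of the lower-part operator, uniqueness of the norms, and the semi-scale property at the bottom of the next gap) transfers with only notational changes, yielding that $\wp(\mathbb{R})\cap L(\mathbb{R}, \mu_{<\omega_1})$ is weakly scaled as required.
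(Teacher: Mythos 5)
Your proposal follows essentially the same route as the paper, which gives no detailed proof but simply asserts that the lemma ``follows from the argument in'' Steel's \emph{Scales in $K(\mathbb{R})$}: an adaptation of that gap-by-gap scale analysis to $L(\mathbb{R}, \mu_{<\omega_1})$, with the sharp supplying the reflection/indiscernibility needed for sets of reals appearing at the top of the model and with the predicate $\mu_{<\omega_1}$ carried through the analysis. (The paper's subsequent remark records an alternative proof via $\power(\mathbb{R})\cap L(\mathbb{R}, \mu_{<\omega_1}) = \Game_{\Delta}({<}\omega^2\mathchar`-\mathbf{\Pi}^1_1)$ and Martin's scale propagation, but that is not the route you or the lemma's stated proof take.)
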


    \begin{rem}
        The recent work of Aguilera and the first author in \cite{AG26} can be naturally adopted to show that under the assumption of \cref{existence_of_sjs},
        \[
        \power(\mathbb{R})\cap L(\mathbb{R}, \mu_{<\omega_1}) = \Game_{\Delta}({<}\omega^2\mathchar`-\mathbf{\Pi}^1_1),
        \]
        where $\Game_{\Delta}$ is the game quantifier corresponding to $G_\Delta$. Then the proof of Martin's scale propagation theorem \cite{Ma08} implies that $\power(\mathbb{R})\cap L(\mathbb{R}, \mu_{<\omega_1})$ is weakly scaled, so this gives another proof of \cref{existence_of_sjs}. We leave details to the reader.
    \end{rem}

    \begin{lem}\label{limit_branch_construction}
        Assume that
        \[
        L(\mathbb{R}, \mu_{<\omega_1})\models\mathsf{AD} + \forall\alpha<\omega_1\,(\mu_{\alpha}\text{ is an ultrafilter on }[\power_{\omega_1}(\mathbb{R})]^{\omega^\alpha})
        \]
        and the sharp for $L(\mathbb{R}, \mu_{<\omega_1})$ exists. Let $\Gamma$ be the pointclass $\Sigma^2_1$ in $L(\mathbb{R}, \mu_{<\omega_1})$. Then for a Turing cone of $y$, there is a $\Gamma$-suitable $y$-premouse of type $\Delta$ that is normally $\omega_1$-iterable.
    \end{lem}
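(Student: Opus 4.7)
The plan is to combine \cref{existence_of_sjs} and \cref{simultaneous_comparison} through the standard self-justifying-system mechanism to upgrade $\vec{A}$-iterability to normal $\omega_1$-iterability. Using \cref{existence_of_sjs}, fix a self-justifying system $\vec{A} = \langle A_i \mid i<\omega\rangle$ of subsets of $\mathbb{R}$ sealing the envelope of $\Gamma = (\Sigma^2_1)^{L(\mathbb{R},\mu_{<\omega_1})}$, with $A_0$ a universal $\Gamma$ set, arranged so that each $A_i$ is ordinal definable from a single real $x_0$. Applying \cref{simultaneous_comparison} to $\vec{A}$ then yields a Turing cone of $y \geq_T x_0$ each of which admits an $\vec{A}$-iterable $\Gamma$-suitable $y$-premouse $\mathcal{P}$ of type $\Delta$.

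Fix such a $y$ and $\mathcal{P}$. The goal is to define a normal $\omega_1$-iteration strategy $\Sigma$ for $\mathcal{P}$. For a normal iteration tree $\mathcal{T}$ of countable limit length on $\mathcal{P}$, one selects a cofinal branch as follows: if $\mathcal{T}$ is $\Gamma$-short, take the unique Q-guided branch provided by $\Gamma$-short tree iterability (which is built into $\Gamma$-suitability); if $\mathcal{T}$ is $\Gamma$-maximal, use the $A_i$-iterability of the appropriate window $\mathcal{P}\vert\eta_k^{\mathcal{P}}$ (in the sense of \cref{vecA-iterability}) to produce, for each $i < k$, a cofinal, wellfounded, non-dropping branch $b_i$ such that $\mathcal{T}^{\frown} b_i$ respects $A_i$. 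The self-justifying property of $\vec{A}$ then forces all such $b_i$ to coincide --- this is the uniqueness-of-SJS-branches argument used in \cite[\S 1]{scales_at_weak_gap} and \cite[\S 7]{StW16} --- giving a canonical choice $\Sigma(\mathcal{T})$.

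One must then verify that $\Sigma$ is total on normal trees of countable limit length, produces wellfounded iterates, and that every iterate $\mathcal{Q}$ along $\Sigma$ remains $\Gamma$-suitable of type $\Delta$ with the $A_i$'s correctly captured, so that the inductive construction can continue. The principal obstacle is piecing these local, per-window strategies together into a single global strategy for normal trees on the type-$\Delta$ premouse $\mathcal{P}$, which involves infinitely many windows $[\eta_k^{\mathcal{P}}, \eta_{k+1}^{\mathcal{P}})$. Here one exploits the fact that any normal tree decomposes naturally at cardinal cutpoints: since extenders are used with strictly increasing lengths, the portion of $\mathcal{T}$ whose extenders lie below $\eta_{k+1}^{\mathcal{P}}$ is a normal tree on the suitable initial segment $\mathcal{P}\vert\eta_{k+1}^{\mathcal{P}}$, which by $\vec{A}$-iterability is $\bigoplus_{i<k+1}A_i$-iterable. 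The SJS property ensures branch choices in consecutive windows cohere, and wellfoundedness of direct limit models at limit stages follows from the non-dropping property of the $\vec{A}$-respecting branches together with the closure properties of the SJS. This last point is precisely where the assumption that $\vec{A}$ is a \emph{self-justifying} system, rather than merely a sequence of captured sets, is essential: without it, one could not rule out limit iterates that are locally consistent but globally ill-founded.
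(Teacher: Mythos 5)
There is a genuine gap at the heart of your treatment of $\Gamma$-maximal trees. You assert that the self-justifying property of $\vec{A}$ ``forces all such $b_i$ to coincide,'' and you build the whole strategy on this uniqueness claim. But $A$-iterability (\cref{local_A-iterability}) only gives you, for each $A'_k=\bigoplus_{i<k}A_i$, \emph{some} cofinal non-dropping branch respecting $A'_k$, together with agreement of the induced embeddings on the hulls $H^{\mathcal{P}}_{A'_k,\alpha}$ of \cref{gamma_and_H}; it does not make the branches themselves unique, and at a $\Gamma$-maximal tree there is no Q-structure to pin a branch down. The branches $b_k$ chosen by the $A'_k$-strategies may genuinely differ, and the actual argument has to take the \emph{limit} branch $b$ defined by $\eta\in b\iff\exists k\,\forall l\geq k\,(\eta\in b_l)$, as in \cite[Theorem 5.4.14]{SchSt} and \cite{scales_at_weak_gap}. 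Note also that for a tree based on a fixed window one uses the strategies $\Sigma_k$ for \emph{all} sufficiently large $k$, not just the finitely many $i<k$ attached to that window; the limit over infinitely many $k$ is what the construction lives on.

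The second missing ingredient is that this limit-branch argument does not work for an arbitrary $\vec{A}$-iterable $\mathcal{P}$: to prove that $b$ is cofinal and that $M^{\mathcal{T}}_b$ is wellfounded one needs $k$-stability ($\sup_j\gamma^{\mathcal{Q}}_{A'_j,\eta^{\mathcal{Q}}_{k-1}+1}=\eta^{\mathcal{Q}}_k$) and $k$-soundness ($\mathcal{Q}=\mathrm{Hull}^{\mathcal{Q}}_1(\eta^{\mathcal{Q}}_k\cup\{\tau^{\mathcal{Q}}_{A'_j,\cdot}\})$). These are arranged by first replacing $\mathcal{P}$ with the collapsed hull $\mathcal{Q}=\mathrm{cHull}^{\mathcal{P}}_1(\eta_0^{\mathcal{P}}\cup\{\tau^{\mathcal{P}}_{A'_j,(\eta_j^{\mathcal{P}+i})^{\mathcal{P}}}\mid i,j<\omega\})$, which by the condensation property of a self-justifying system is again a $\Gamma$-suitable premouse of type $\Delta$ and $\vec{A}$-iterable via pullback; the normally $\omega_1$-iterable premouse produced is this $\mathcal{Q}$, not $\mathcal{P}$ itself. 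Your appeal to ``non-dropping branches plus closure properties of the SJS'' for wellfoundedness at limits papers over exactly this point: without passing to the hull and invoking stability and soundness, neither the cofinality nor the wellfoundedness of the limit branch is available, and the coincidence claim you substitute for it is unjustified. (Your handling of short trees and of the decomposition into windows is essentially fine, and matches the paper; also, as in the paper, the construction only yields a strategy for \emph{normal} trees, since iterates lose the soundness needed to restart the argument in earlier windows.)
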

    \begin{proof}
    By \cref{existence_of_sjs}, there is a self-justifying system $\vec{A} = \langle A_i\mid i<\omega\rangle$ sealing the envelope of $\Gamma$.
    Let $x\in\mathbb{R}$ be such that each $A_k$ is ordinal definable from $x$ in $M$.
    Write $A'_j = \bigoplus_{i<j}A_i$ for any $j<\omega$.
    By \cref{simultaneous_comparison}, there is a $\vec{A}$-iterable $\Gamma$-suitable $y$-premouse $\mathcal{P}$ of type $\Delta$ for some $y\geq_T x$.
    Let
    \[
    \mathcal{Q}=\mathrm{cHull}^{\mathcal{P}}_1(\eta_0^{\mathcal{P}}\cup\{\tau^{\mathcal{P}}_{A'_j, (\eta_j^{\mathcal{P}+i})^{\mathcal{P}}}\mid i, j<\omega\})
    \]
    and let $\pi\colon\mathcal{Q}\to\mathcal{P}$ be the uncollapse map.
    By the condensation property of a self-justifying system (cf.\ \cite[Theorem 5.4.3]{SchSt}), $\mathcal{Q}$ is a $\Gamma$-suitable $y$-premouse of type $\Delta$ and
    \[
    \pi(\tau^{\mathcal{Q}}_{A'_j, (\eta_j^{\mathcal{Q}+i})^{\mathcal{Q}}}) = \tau^{\mathcal{P}}_{A'_j, (\eta_j^{\mathcal{P}+i})^{\mathcal{P}}}
    \]
    for all $i, j<\omega$.
    Moreover, $\mathcal{Q}$ is $\vec{A}$-iterable witnessed by the pullback strategies of the $\mathcal{P}\vert\eta_k^{\mathcal{P}}$'s and has the following key property: for all $k<\omega$,
    \begin{enumerate}
        \item $\sup_{j<\omega}\gamma_{A'_j, \eta_{k-1}^{\mathcal{Q}}+1}^{\mathcal{Q}} = \eta_k^{\mathcal{Q}}$,\footnote{Recall that $\eta_k^{\mathcal{Q}} = \delta^{\mathcal{Q}}_{\eta_{k-1}^{\mathcal{Q}}+1}$.} and
        \item $\mathcal{Q} = \mathrm{Hull}^{\mathcal{Q}}_1(\eta_k^{\mathcal{Q}}\cup\{\tau^{\mathcal{Q}}_{A'_j, (\eta_j^{\mathcal{Q} +i})^{\mathcal{Q}}}\mid i, j<\omega\})$.
    \end{enumerate}
    Following \cite{scales_at_weak_gap}, we call the first property \emph{$k$-stability} and the second property \emph{$k$-soundness}.
    
    For each $k<\omega$, let $\Sigma_k$ be an $A'_k$-iteration strategy for $\mathcal{Q}\vert\eta_k^{\mathcal{Q}}$.
    We define the desired iteration strategy $\Lambda$ for $\mathcal{Q}$ as follows:
    Let $\mathcal{T}$ be a putative normal iteration tree of countable length on $\mathcal{Q}$ that is based on $(\eta_{j-1}^{\mathcal{Q}}, \eta_j^{\mathcal{Q}})$.

    First, assume that either $\mathcal{T}$ has successor length or $\mathcal{T}$ is $\Gamma$-short.
    Then $\mathcal{T}$ is according to $\Sigma_k$ for all $k\geq j$ (when $\mathcal{T}$ is regarded as an iteration tree on $\mathcal{Q}\vert\eta_k^{\mathcal{Q}}$).
    It follows that if $\mathcal{T}$ has successor length, its last model is wellfounded and that if $\mathcal{T}$ has limit length, then for all $k\geq j$, $\Sigma_k$ chooses the same cofinal branch $b$ of $\mathcal{T}$ and $M_b^{\mathcal{T}}$ (which is an iterate of $\mathcal{Q}$) is wellfounded.\footnote{Here, we use that $\mathcal{Q}\cap\ord = \sup_{k<\omega}\eta_k^{\mathcal{Q}}$.}
    So we define $\Lambda(\mathcal{T})=b$.
    
    Now assume instead that $\mathcal{T}$ is $\Gamma$-maximal.
    Let $b_k = \Sigma_k(\mathcal{T})$ and let $\mathcal{Q}_k = M_{b_k}^{\mathcal{T}}$.
    Then we define $b=\Lambda(\mathcal{T})$ as the ``limit'' of the branches $b_k$ by
    \[
    \eta\in b \iff \exists k\forall l\geq k (\eta\in b_l).
    \]
    The proof of \cite[Theorem 5.4.14]{SchSt} implies that $b$ is a cofinal well-founded branch of $\mathcal{T}$ (that respects all $A_i$'s).
    The $k$-stability is used to show the cofinality of $b$ and the $k$-soundness is used to show the wellfoundedness of $b$.
    Moreover, one can show that $b$ respects all $A_i$'s, $M_b^{\mathcal{T}}$ is still $\vec{A}$-iterable, $k$-stable, and $k$-sound.
    
    One can easily extend the above argument to define $\Lambda(\mathcal{T})$ for any normal iteration tree $\mathcal{T}$ of countable length on $\mathcal{Q}$.\footnote{The proof does not help us to define an iteration strategy for $\mathcal{Q}$ acting on non-normal iteration trees; let $\mathcal{T}$ be a maximal tree based on $(\eta_{k-1}^{\mathcal{Q}}, \eta_k^{\mathcal{Q}})$, let $b=\Lambda(\mathcal{T})$, and let $\mathcal{R}=M_b^{\mathcal{T}}$.
    Let $\mathcal{U}$ be a maximal tree based on $(\eta_{k'-1}^{\mathcal{R}}, \eta_{k'}^{\mathcal{R}})$ for some $k'<k$.
    Since $\mathcal{R}$ is not $k'$-sound, we cannot use the above proof to define $\Lambda(\mathcal{T}{}^{\frown}b{}^{\frown}\mathcal{U})$.}
    \end{proof}

    As in \cite[Section 2]{FNS10}, \cref{limit_branch_construction} implies \cref{mouse_from_determinacy}. Finally, \cref{mainthm} follows from \cref{LGD_from_LC}, \cref{target_DetSol}, and \cref{mouse_from_determinacy}.

    \newcommand{\etalchar}[1]{$^{#1}$}

\end{document}